\newcommand{\hidden}[1]{
%
}
\newcommand{\Z}{\mathbb{Z}}
\newcommand{\Q}{\mathbb{Q}}
\newcommand{\pe}{\mathfrak{p}}
\newcommand{\qu}{\mathfrak{q}}
\newcommand{\ttilde}{\widetilde}
\newcommand{\dual}{\vee}
\newcommand{\SF}[2]{\Fitt_{#2}^{\langle #1 \rangle}}
\newcommand{\vSF}[2]{\Fitt_{#2}^{[#1]}}
\newcommand{\PP}{\mathcal{P}}
\newcommand{\QQ}{\mathcal{Q}}
\newcommand{\CC}{\mathcal{C}}
\newcommand{\RR}{\mathcal{R}}
\newcommand{\GG}{\mathcal{G}}
\newcommand{\OO}{\mathcal{O}}
\newcommand{\EE}{\mathcal{E}}
\newcommand{\TT}{\mathcal{T}}
\newcommand{\FF}{\mathcal{F}}
\newcommand{\LL}{\mathcal{L}}
\newcommand{\bT}{\mathbb{T}}
\newcommand{\ff}{\mathfrak{f}}
\newcommand{\sI}{\mathscr{I}}
\newcommand{\iL}{\mathfrak{L}}
\newcommand{\otimesL}{\otimes^{\mathbb{L}}}
\newcommand{\De}{D}
\newcommand{\DeB}{\De^{\bdd}}
\newcommand{\DeP}{\De^{\perf}}
\newcommand{\DePTor}{\De^{\perf}_{\tor}}
\newcommand{\Ch}{Ch}
\newcommand{\ChB}{\Ch^{\bdd}}
\newcommand{\ChP}{\Ch^{\perf}}
\newcommand{\ChPTor}{\Ch^{\perf}_{\tor}}
\newcommand{\Inv}{\mathcal{I}}
\newcommand{\derR}{\mathcal{R}}
\newcommand{\ol}[1]{\overline{#1}}
\newcommand{\Zpur}{\Z_p^{\ur}}
\newcommand{\RRur}{\RR^{\ur}}
\DeclareMathOperator{\Gal}{Gal}
\DeclareMathOperator{\Ker}{Ker}
\DeclareMathOperator{\Image}{Im}
\DeclareMathOperator{\bdd}{b}
\DeclareMathOperator{\perf}{perf}
\DeclareMathOperator{\pd}{pd}
\DeclareMathOperator{\Fitt}{Fitt}
\DeclareMathOperator{\Det}{Det}
\DeclareMathOperator{\rank}{rank}
\DeclareMathOperator{\Hom}{Hom}
\DeclareMathOperator{\Frac}{Frac}
\DeclareMathOperator{\length}{d}
\DeclareMathOperator{\glob}{gl}
\DeclareMathOperator{\local}{loc}
\DeclareMathOperator{\Sel}{Sel}
\DeclareMathOperator{\Aut}{Aut}
\DeclareMathOperator{\ur}{ur}
\DeclareMathOperator{\tor}{tor}
\DeclareMathOperator{\ram}{ram}
\DeclareMathOperator{\Inf}{Inf}
\let\oldenumerate\enumerate
\renewcommand{\enumerate}{
   \oldenumerate
   \setlength{\itemsep}{1pt}
   \setlength{\parskip}{0pt}
   \setlength{\parsep}{0pt}
}
\let\olditemize\itemize
\renewcommand{\itemize}{
   \olditemize
   \setlength{\itemsep}{1pt}
   \setlength{\parskip}{0pt}
   \setlength{\parsep}{0pt}
}
\theoremstyle{plain}
\newtheorem{thm}{Theorem}[section]
\newtheorem{lem}[thm]{Lemma}
\newtheorem{prop}[thm]{Proposition}
\newtheorem{cor}[thm]{Corollary}
\theoremstyle{definition}
\newtheorem{defn}[thm]{Definition}
\newtheorem{rem}[thm]{Remark}
\title[Two variable equivariant Iwasawa theory]{Fitting ideals in two-variable equivariant Iwasawa theory and an application}
\author[T. Kataoka]{Takenori Kataoka}
\address{Faculty of Science and Technology, Keio University.
3-14-1 Hiyoshi, Kohoku-ku, Yokohama, Kanagawa 223-8522, Japan}
\email{tkataoka@math.keio.ac.jp}
\keywords{Iwasawa modules, Fitting ideals, main conjecture}
\subjclass[2010]{11R23 (Primary)}
\begin{document}

\begin{abstract}

We study equivariant Iwasawa theory for two-variable abelian extensions of an imaginary quadratic field.
One of the main goals of this paper is to describe the Fitting ideals of Iwasawa modules using $p$-adic $L$-functions.
We also provide an application to Selmer groups of elliptic curves with complex multiplication.

\end{abstract}

\maketitle


\section{Introduction}\label{sec:01}
Main conjectures in Iwasawa theory predict that, in various situations, Iwasawa modules are closely related to $p$-adic $L$-functions.
As a significant result, Wiles \cite{Wil90} proved the main conjecture for ideal class groups over totally real fields.
The work studied the cyclotomic $\Z_p$-extensions of totally real fields, and we call the setting ``one-variable.''
His work was refined by, among others, Ritter-Weiss from the viewpoint of equivariant theory.
In fact, Ritter-Weiss \cite{RW02} proved the equivariant main conjecture for finite abelian extensions of totally real fields.
Moreover, in subsequent works they succeeded in proving the equivariant main conjecture even for non-abelian Galois extensions, but we do not discuss non-abelian cases in this paper.

Another important theme in Iwasawa theory is the ``two-variable'' analogue, that is, study of the unique $\Z_p^2$-extensions of imaginary quadratic fields.
Using the Euler system of elliptic units, Rubin \cite{Rub91} proved the two-variable main conjecture.
As for equivariant refinements, Johnson-Leung--Kings \cite{JK11} proved a formulation of two-variable equivariant main conjecture.

However, in order to formulate the equivariant main conjectures in both one-variable and two-variable settings, we have to suitably modify the Iwasawa modules.
As a consequence, it is not clear how to recover precise information about the original Iwasawa modules.
There seems to be agreement that this kind of information is afforded by the Fitting ideals of the modules (we only study the initial Fitting ideals in this paper).

In the one-variable situation, Greither-Kurihara \cite{GK15} \cite{GK17} developed a method to compute the Fitting ideals of the original Iwasawa modules.
See Remark \ref{rem:109} for detail and further progress.
The main theme of this paper is to develop a two-variable analogue of those one-variable results.

To be more precise, we fix our notation.
Let $K$ be an imaginary quadratic field.
We fix a prime number $p \geq 5$ which splits in $K$ into $\pe$ and $\overline{\pe}$.
We fix an algebraic closure $\overline{K}$ of $K$ and every algebraic extension will be considered to be contained in $\overline{K}$.
Let $K_{\infty}/K$ be the unique $\Z_p^2$-extension.
Let $L/K$ be a finite abelian extension and put $L_{\infty} = K_{\infty}L$.
Put $\GG = \Gal(L_{\infty}/K)$, the Galois group, and $\RR = \Z_p[[\GG]]$, the Iwasawa algebra.

In general, for a finite extension $k$ of $\Q$, let $S_p(k)$ be the set of $p$-adic primes of $k$.
If $\FF/k$ is an algebraic extension, we denote by $S_{\ram}(\FF/k)$ the set of finite places of $k$ which are ramified in $\FF/k$.
For example, we have $S_{\ram}(L_{\infty}/K) = S_{\ram}(L/K) \cup S_p(K)$ and $S_p(K) = \{\pe, \ol{\pe}\}$.

\subsection{$\Sigma_0$-ramified Iwasawa module}

Let $\Sigma$ be a finite set of finite places of $K$ which contains $S_{\ram}(L_{\infty}/K)$.
Put $\Sigma_0 = \Sigma \setminus \{\overline{\pe}\}$.
As the first object of study, we consider the $\Sigma_0$-ramified Iwasawa module $X_{\Sigma_0}(L_{\infty})$, which is defined as the Galois group of the maximal abelian $p$-extension of $L_{\infty}$ which is unramified outside $\Sigma_0$.
It is known that $X_{\Sigma_0}(L_{\infty})$ is a finitely generated torsion $\RR$-module.

The first goal of this paper is to compute its initial Fitting ideal $\Fitt_{\RR} (X_{\Sigma_0}(L_{\infty}))$.
%
A significant progress toward this problem was made in a previous paper \cite{Kata_05} of the author.
Let $\Fitt_{\RR}^{[n]}$ denote the $n$-th shift of $\Fitt_{\RR}$, which was introduced in \cite{Kata_05}; see Theorem \ref{thm:81} for the definition.
Then it is shown in \cite[Theorem 5.16]{Kata_05} that the ideals $\Fitt_{\RR}(X_{\Sigma_0}(L_{\infty}))$ and $\Fitt_{\RR}^{[2]}(\Z_p)$ differ only by an invertible ideal.
Moreover, the author claimed that the invertible ideal should be described as a certain $p$-adic $L$-function.
Note here that invertible ideals of $\RR$ are principal since $\RR$ is a product of local rings.
 

Now we state the first main result in this paper, which asserts that the prediction is true.
In Definition \ref{defn:96}, we will introduce an invertible ideal $\iL_{\Sigma_0}(L_{\infty}/K)$ of $\RR$ which is related to a Katz-type $p$-adic $L$-function.

\begin{thm}\label{thm:33}
Let $\Sigma$ be a finite set of finite places of $K$ which contains $S_{\ram}(L_{\infty}/K)$ and put $\Sigma_0 = \Sigma \setminus \{\ol{\pe}\}$.
Then we have 
\[
\Fitt_{\RR}(X_{\Sigma_0}(L_{\infty})) = \iL_{\Sigma_0}(L_{\infty}/K) \Fitt_{\RR}^{[2]}(\Z_p)
\]
as ideals of $\RR$.
\end{thm}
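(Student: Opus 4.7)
The starting point is \cite[Theorem 5.16]{Kata_05}, which already supplies an invertible ideal $I$ of $\RR$ with
\[
\Fitt_{\RR}(X_{\Sigma_0}(L_{\infty})) = I \cdot \Fitt_{\RR}^{[2]}(\Z_p),
\]
so the content of the present theorem reduces to identifying $I$ with $\iL_{\Sigma_0}(L_{\infty}/K)$. Because $\RR$ is a finite product of complete local rings, every invertible ideal is principal and is determined by its images in suitable localizations; concretely, it suffices to check the equality of $I$ and $\iL_{\Sigma_0}(L_{\infty}/K)$ component-by-component after decomposing $\RR$ according to the $\overline{\Q}_p$-valued characters $\chi$ of the torsion subgroup $\Delta$ of $\GG$.

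The plan is to perform this check by appealing to the two-variable equivariant main conjecture of Johnson-Leung--Kings \cite{JK11}, which rests on Rubin's theorem \cite{Rub91}. On each $\chi$-component $\RR_{\chi}$, the algebra becomes (a finite extension of) a two-variable Iwasawa algebra over $\Z_p$, and the EMC expresses an appropriate characteristic/determinant invariant built from $X_{\Sigma_0}(L_{\infty})_{\chi}$ in terms of the Katz $p$-adic $L$-function that defines $\iL_{\Sigma_0}(L_{\infty}/K)$. Combining this with the displayed formula above, and using that $\Fitt_{\RR}^{[2]}(\Z_p)_{\chi}$ is an intrinsic algebraic invariant independent of $L$-values, pins down $I_{\chi}$ as the $\chi$-component of $\iL_{\Sigma_0}(L_{\infty}/K)$. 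Aggregating over $\chi$ then yields the global identity $I = \iL_{\Sigma_0}(L_{\infty}/K)$.

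The main obstacle I anticipate is reconciling normalizations between the three inputs. The module $X_{\Sigma_0}(L_{\infty})$ is ramified only at $\Sigma_0 = \Sigma \setminus \{\overline{\pe}\}$, so the roles of $\pe$ and $\overline{\pe}$ are asymmetric, whereas the EMC in \cite{JK11} is formulated after certain specific local modifications at the primes above $p$. Matching Euler factors at $\pe$, $\overline{\pe}$, and the primes in $\Sigma \setminus S_p(K)$ demands care: one must verify that the decomposition supplied by \cite[Theorem 5.16]{Kata_05} into the algebraic invariant $\Fitt_{\RR}^{[2]}(\Z_p)$ and the invertible factor $I$ is precisely the decomposition dictated by the EMC, with the shifted Fitting ideal absorbing exactly the local cohomology of $\Z_p$ at the prime above $p$ retained in $\Sigma_0$, and the invertible factor matching the analytic definition of $\iL_{\Sigma_0}(L_{\infty}/K)$ given in Definition \ref{defn:96}.
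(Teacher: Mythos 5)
Your proposal correctly identifies the two high-level inputs (the factorization from \cite[Theorem 5.16]{Kata_05} and the equivariant main conjecture of Johnson-Leung--Kings), and it is true that the content of the theorem is to pin down the invertible factor $I$. But between ``appeal to the EMC'' and ``conclude $I = \iL_{\Sigma_0}$'' there are several substantive gaps, not mere normalization checks, and they are exactly where the paper does most of its work.

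First, the EMC of \cite{JK11} is stated as the triviality of a \emph{determinant module of a complex} modified by the elliptic-unit submodule $\EE_{\ff}$; it is not a statement about $\Fitt_{\RR}(X_{\Sigma_0})$ or any invariant of $X_{\Sigma_0}$ directly. Converting a determinant-module statement into a Fitting-ideal statement is not formal: the paper proves a general anti-commutation $\Det_R \circ \varphi = -\Fitt_R$ (Theorem \ref{thm:104}) precisely to make this translation legitimate. Your proposal never acknowledges that this equivalence is needed, and without it one cannot combine \cite{JK11} with \cite[Theorem 5.16]{Kata_05} on the same footing. Second, \cite{JK11} is formulated with elliptic units, not with the Katz $p$-adic $L$-function in Definition \ref{defn:96}; the bridge is de Shalit's Coleman-map computation (Theorem \ref{thm:94} here), which the paper uses to identify the local determinant $\Det_{\RR_{\ff}}(D_{\pe, \EE_{\ff}}^{\local})$ with $\iL_{\Sigma_{\ff}, \{\pe\}}$ (Corollary \ref{cor:49}). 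Omitting this step leaves $\iL_{\Sigma_0}$ floating with no connection to the EMC input. Third, the EMC is available only for the ray class field situation $L = K(\ff p)$, $\Sigma = \Sigma_{\ff}$, $S=\{\pe\}$, so one must first reduce to that case by computing how both sides transform under enlarging $\Sigma$, changing $S$, and passing to subextensions (Lemmas \ref{lem:47} and \ref{lem:46}), using the local Euler factors of Propositions \ref{prop:43}--\ref{prop:88}. Your proposal gestures at ``matching Euler factors'' but does not carry out or even set up this reduction.

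Finally, the character-by-character strategy is the wrong level of generality. The ring $\RR$ does decompose as a product of local rings, but not into one factor per $\overline{\Q}_p$-valued character of $\Delta$ (only per $\Q_p$-conjugacy class, and even then the precise integral structure matters for Fitting ideals). The paper works integrally and equivariantly throughout with the complex $C_{\Sigma, S}$ fitting into \eqref{eq:40}/\eqref{eq:14}; Corollary \ref{cor:41} then identifies the invertible factor $I$ as $\Det_{\RR}^{-1}(C_{\Sigma,S})$ rather than trying to read it off component-by-component. If you want to pursue a character decomposition you would need to argue separately that the integral ideals are recovered from their scalar extensions; the paper avoids this entirely by staying integral. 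As written, your argument does not constitute a proof: the key ideas (Theorem \ref{thm:104}, the complex $C_{\Sigma,S}$, de Shalit's computation, and the reduction lemmas) are missing, and ``the shifted Fitting ideal absorbing exactly the local cohomology'' needs to be a computation, not an expectation.
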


The proof of Theorem \ref{thm:33} will be outlined just after Theorem \ref{thm:34} below.

\subsection{$S$-ramified Iwasawa module}

More generally than $X_{\Sigma_0}(L_{\infty})$, we consider the $S$-ramified Iwasawa module $X_{S}(L_{\infty})$ for any finite set $S$ of finite places of $K$ such that $\pe \in S$ and $\ol{\pe} \not \in S$.
In particular, we are concerned with the minimal case $S = \{\pe\}$.
Although $X_{\{\pe\}}(L_{\infty})$ looks more fundamental than $X_{\Sigma_0}(L_{\infty})$, the structure of $X_{\{\pe\}}(L_{\infty})$ is more complicated from our perspective.

\begin{rem}\label{rem:109}
Let us recall the analogue in the one-variable setting. 
We consider a finite abelian extension $k'/k$ of totally real fields and the cyclotomic $\Z_p$-extension $k'_{\infty}$ of $k'$.

Let $\Sigma$ be a finite set of finite places of $k$ which contains $S_{\ram}(k'_{\infty}/k) = S_{\ram}(k'/k) \cup S_p(k)$.
Then the Fitting ideal of $X_{\Sigma}(k'_{\infty})$ is described in \cite{GK15} and \cite{GK17}, complemented by \cite{GKT19}.
The author \cite{Kata_05} gave an interpretation of the results \cite{GK15}, \cite{GK17} using $\Fitt^{[2]}(\Z_p)$.
Our Theorem \ref{thm:33} is an analogue of those results.

However, those results do not describe the Fitting ideal of $X_{S_p(k)}(k'_{\infty})$ unless accidentally $S_{\ram}(k'_{\infty}/k) = S_p(k)$.
It is a more recent work \cite{GKK_09} that describes the Fitting ideal of $X_{S_p(k)}(k'_{\infty})$ in general, using $\Fitt^{[1]}$ of a bit more complicated modules.
Our second main result in this paper below is an analogue of this result.
\end{rem}

In our two-variable setting, we obtain the following result.

\begin{thm}\label{thm:34}
Let $S$ be a finite set of finite places of $K$ such that $\pe \in S$ and $\ol{\pe} \not \in S$.
Take a finite set $\Sigma$ of finite places of $K$ which contains $S_{\ram}(L_{\infty}/K) \cup S$.
Then we have
\[
\Fitt_{\RR}(X_{S}(L_{\infty})) = \iL_{\Sigma, S}(L_{\infty}/K) \Fitt_{\RR}^{[1]}(Z_{\Sigma \setminus S}^0(L_{\infty}/K))
\]
as ideals of $\RR$.
\end{thm}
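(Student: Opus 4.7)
The strategy is to deduce Theorem~\ref{thm:34} from Theorem~\ref{thm:33} by comparing the Iwasawa modules $X_S(L_\infty)$ and $X_{\Sigma_0}(L_\infty)$ through global duality, and tracking the local contributions at the primes in $\Sigma \setminus S$. Since $S \subseteq \Sigma_0 = \Sigma \setminus \{\ol{\pe}\}$, the first step is to construct an exact sequence relating these two modules. Using class field theory applied along the tower $L_\infty/K$, one expects a four-term exact sequence of compact $\RR$-modules of the form
\[
0 \longrightarrow X_S(L_\infty) \longrightarrow X_{\Sigma_0}(L_\infty) \longrightarrow \bigoplus_{v \in \Sigma_0 \setminus S} \mathcal{Z}_v \longrightarrow \Z_p \longrightarrow 0,
\]
where $\mathcal{Z}_v$ is the natural local Iwasawa module at $v$ (essentially the Galois group of the maximal unramified abelian $p$-extension of the local completion, with appropriate compatibilities along the tower). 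The module $Z_{\Sigma\setminus S}^0(L_\infty/K)$ in the statement should then be identified as the kernel of the natural augmentation $\bigoplus_{v\in\Sigma\setminus S}\mathcal{Z}_v \to \Z_p$, which absorbs both the extra local contribution at $\ol{\pe}$ and the cokernel $\Z_p$ of the sequence above.

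Next, I would lift the above exact sequence to a distinguished triangle of perfect complexes and apply the formalism of shifted Fitting ideals developed in \cite{Kata_05}. The key mechanism is that bundling the modules $\bigoplus_{v\in\Sigma\setminus S}\mathcal{Z}_v$ and $\Z_p$ into a single module $Z_{\Sigma\setminus S}^0(L_\infty/K)$ via the augmentation kernel shifts the cohomological position by one degree, converting $\Fitt_{\RR}^{[2]}(\Z_p)$ (which appears in Theorem~\ref{thm:33}) into $\Fitt_{\RR}^{[1]}(Z_{\Sigma\setminus S}^0(L_\infty/K))$ up to local Fitting contributions. Combining this with the identity of Theorem~\ref{thm:33} and the multiplicativity of $\Fitt_\RR$ along the exact sequence, one obtains the claimed equality, provided that the $p$-adic $L$-function factors balance: the ratio $\iL_{\Sigma,S}(L_\infty/K)/\iL_{\Sigma_0}(L_\infty/K)$ must match precisely the product of Euler-type corrections coming from the Fitting ideals of the local modules $\mathcal{Z}_v$ for $v \in \Sigma_0 \setminus S$.

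The main obstacle will be verifying this balance precisely. Two technical points stand out: first, identifying the correct local modules $\mathcal{Z}_v$ and computing their Fitting ideals so that they match exactly the Euler factors encoded in the definition of $\iL_{\Sigma,S}(L_\infty/K)$ (parallel to Definition~\ref{defn:96}), which requires a careful interpolation argument at each place; and second, handling the distinguished role played by $\ol{\pe}$, which lies in $\Sigma \setminus S$ but for which the $\Z_p^2$-extension introduces non-trivial ramification behavior quite different from the tamely ramified primes, and whose contribution is folded into $Z_{\Sigma\setminus S}^0$ rather than left as an outer correction. A technical prerequisite is to establish the perfectness over $\RR$ of the relevant Iwasawa cohomology complexes so that the shifted Fitting ideal operations are well-defined; this should follow from standard cohomological dimension results under the assumption that $p$ splits in $K$, but the bookkeeping has to be organized carefully.
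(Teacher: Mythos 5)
Your strategy has a fundamental circularity problem. In the paper's logic, Theorem~\ref{thm:33} is \emph{not} an independent starting point: it is deduced as a corollary of Theorem~\ref{thm:34} (via Lemma~\ref{lem:79}(1) and the identification $\iL_{\Sigma_0} = \iL_{\Sigma,\Sigma_0}$). You propose to go the other way, deducing Theorem~\ref{thm:34} from Theorem~\ref{thm:33}, but you never supply an independent proof of Theorem~\ref{thm:33}, so the argument has nothing to stand on.

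There is also a concrete directional error in your proposed four-term exact sequence. You wrote $0 \to X_S(L_\infty) \to X_{\Sigma_0}(L_\infty) \to \cdots$, but since $S \subset \Sigma_0$, allowing ramification only at $S$ gives a \emph{smaller} extension, so $X_S$ is a \emph{quotient} of $X_{\Sigma_0}$, not a submodule; the natural map is $X_{\Sigma_0} \twoheadrightarrow X_S$. The paper's Proposition~\ref{prop:37} uses the correct direction, namely an exact sequence $0 \to \bigoplus_{v \in \Sigma \setminus S} H^1(K_v, \bT) \to X_{\Sigma} \to X_S \to 0$, and then packages the remaining contributions into a perfect complex $C_{\Sigma,S}$ whose second cohomology is an extension $0 \to X_S \to H^2(C_{\Sigma,S}) \to Z^0_{\Sigma\setminus S} \to 0$.

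More importantly, the actual arithmetic input never appears in your sketch. The paper reduces Theorem~\ref{thm:34} to the determinant identity $\Det_{\RR}^{-1}(C_{\Sigma,S}) = \iL_{\Sigma,S}$ (Theorem~\ref{thm:42}). Your instinct that one can vary $S$ and $\Sigma$ by matching local Fitting contributions with Euler factors of the $p$-adic $L$-function is correct and corresponds to the paper's Lemma~\ref{lem:47}, built on Propositions~\ref{prop:43} and~\ref{prop:88}. But that reduction only passes from one pair $(\Sigma,S)$ to another; it cannot conjure the base case. The base case $(\Sigma,S) = (\Sigma_\ff, \{\pe\})$ is where all the work lies, and it requires the equivariant main conjecture of Johnson-Leung--Kings (Theorem~\ref{thm:48}) together with de Shalit's Coleman-map computation relating elliptic units to $\mu_\ff$ (Theorem~\ref{thm:94}). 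Without invoking these, no proof of Theorem~\ref{thm:34} — or, for that matter, of Theorem~\ref{thm:33} — can be completed.
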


The definitions of $Z_{\Sigma \setminus S}^0(L_{\infty}/K)$ and $\iL_{\Sigma, S}(L_{\infty}/K)$ will be given in Section \ref{sec:51}.
See also Remark \ref{rem:111} for the computation of $\Fitt_{\RR}^{[1]}(Z_{\Sigma \setminus S}^0(L_{\infty}/K))$.

By Lemma \ref{lem:79}(1) below and $\iL_{\Sigma_0} = \iL_{\Sigma, \Sigma_0}$ by Definition \ref{defn:96}, it follows that Theorem \ref{thm:34} for $S = \Sigma_0$ is equivalent to Theorem \ref{thm:33}.
For that reason, in this paper we give only a proof of Theorem \ref{thm:34}.

Now we outline the proof of Theorem \ref{thm:34}.
The main ingredient is the result by Johnson-Leung--Kings \cite{JK11}, which is mentioned above.
However, we have to develop algebraic theory to connect the result \cite[Theorem 5.7]{JK11}, which uses determinant modules, and Theorem \ref{thm:34}.
Indeed, in this paper we will show that the notions of {\it determinant modules of complexes} and {\it Fitting ideals of modules} are essentially equivalent.
Then we reduce the proof of Theorem \ref{thm:34} to Theorem \ref{thm:42} on the determinant of a certain complex $C_{\Sigma, S}$, and prove Theorem \ref{thm:42} via the result of \cite{JK11}.
Here we also need the work by de Shalit \cite{dS87}, since the main result of \cite{JK11} uses elliptic units instead of $p$-adic $L$-functions.

\subsection{Application to CM elliptic curves}\label{subsec:302}

It is known that the Iwasawa modules over imaginary quadratic fields are related to the Selmer groups of elliptic curves with complex multiplication.
That fact is used to prove results of Birch--Swinnerton-Dyer type (for example, Rubin \cite[\S 11, \S12]{Rub91}, de Shalit \cite[Chapter IV]{dS87}).
Motivated by those works, we apply the results of the preceding sections to elliptic curves with complex multiplication.

Let $E$ be an elliptic curve over $\Q$ which has complex multiplication by the ring of integers $\OO_K$ of $K$.
Suppose that $E$ has good reduction at $p$; since $p$ splits in $K$, the elliptic curve $E$ must have ordinary reduction at $p$.
Similarly as above, let $L/K$ be a finite abelian extension and $S$ a finite set of finite places of $K$ such that $\pe \in S$ and $\ol{\pe} \not\in S$.
We shall study the $\pe$-torsion part of the $S$-imprimitive Selmer group $\Sel_S(E/L)[\pe^{\infty}]$ (introduced in Definition \ref{defn:66}).

In order to state the result, put $L^{\pe} = L(E[\pe^{\infty}])$, which is a one-variable extension of $K$.
Put $\RR^{\pe} = \Z_p[[\Gal(L^{\pe}/K)]]$ and let $\pi_{L^{\pe}/L}: \RR^{\pe} \to \Z_p[\Gal(L/K)]$ be the projection map.
We denote by $\chi_{E, \pe}$ the character defined by the action of $E[\pe^{\infty}]$:
\[
\chi_{E, \pe}: \Gal(L^{\pe}/K) \to \Aut(E[\pe^{\infty}]) \simeq \Z_p^{\times}.
\]
Then $\chi_{E, \pe}$ induces a twisting algebra isomorphism $\RR^{\pe} \to \RR^{\pe}$, which we denote by $\ttilde{\chi_{E, \pe}}$.
We denote by $(-)^{\dual}$ the Pontryagin dual.

\begin{thm}\label{thm:75}
Let $\Sigma$ be a finite set of finite places of $K$ which contains $S_{\ram}(L_{\infty}/K) \cup S$.
Then we have
\[
\Fitt_{\Z_p[\Gal(L/K)]} (\Sel_{S}(E/L)[\pe^{\infty}]^{\dual}) 
= \pi_{L^{\pe}/L} \circ \ttilde{\chi_{E, \pe}} \left( \iL_{\Sigma, S}(L^{\pe}/K) \Fitt_{\RR^{\pe}}^{[1]}(Z_{\Sigma \setminus S}^0(L^{\pe}/K)) \right)
\]
as ideals of $\Z_p[\Gal(L/K)]$.
%
\end{thm}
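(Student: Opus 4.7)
The plan is to derive Theorem \ref{thm:75} from Theorem \ref{thm:34} applied to the one-variable extension $L^{\pe}/K$, via the classical translation between Selmer groups of CM elliptic curves and arithmetic Iwasawa modules (cf.\ \cite[Chapter IV]{dS87} and \cite[\S 11]{Rub91}).

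First, since $E$ has CM by $\OO_K$ and good ordinary reduction at $p$, the Galois module $E[\pe^{\infty}]$ is cofree of corank one with $G_K$-action through $\chi_{E, \pe}$; in particular $L^{\pe} = L(E[\pe^{\infty}])$ is a $\Z_p$-extension of $L$ contained in the two-variable tower, so $\Sigma$ also contains the ramification set of $L^{\pe}/K$. Theorem \ref{thm:34} therefore applies to $L^{\pe}/K$ and yields
\[
\Fitt_{\RR^{\pe}}(X_S(L^{\pe})) = \iL_{\Sigma, S}(L^{\pe}/K)\, \Fitt_{\RR^{\pe}}^{[1]}(Z_{\Sigma \setminus S}^0(L^{\pe}/K)).
\]

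The key step is the Kummer-theoretic identification of the Selmer group. Inflation from $L$ to $L^{\pe}$ converts $H^1(G_{L, \Sigma}, E[\pe^{\infty}])$ into a $\chi_{E, \pe}$-twist of continuous characters of $G_{L^{\pe}, \Sigma}$, and the local conditions defining $\Sel_S(E/L)$ (relaxed at the primes of $S$, unramified Bloch--Kato at those outside $S$) translate, after this twist, into the requirement that the corresponding abelian pro-$p$ extension of $L^{\pe}$ be unramified outside $S$. Taking Pontryagin duals and descending along $\Gal(L^{\pe}/L)$, one obtains an isomorphism of $\Z_p[\Gal(L/K)]$-modules
\[
\Sel_S(E/L)[\pe^{\infty}]^{\dual} \cong \ttilde{\chi_{E, \pe}}^{*}\bigl(X_S(L^{\pe})\bigr) \otimes_{\RR^{\pe},\, \pi_{L^{\pe}/L}} \Z_p[\Gal(L/K)],
\]
where $\ttilde{\chi_{E, \pe}}^{*}(\,\cdot\,)$ denotes the $\RR^{\pe}$-module obtained by pulling back the action through $\ttilde{\chi_{E, \pe}}$. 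It then remains to combine this with two formal properties of Fitting ideals, both of which apply equally to $\Fitt^{[1]}$: compatibility with surjective base change, which gives $\Fitt_{\Z_p[\Gal(L/K)]}(M \otimes_{\RR^{\pe}} \Z_p[\Gal(L/K)]) = \pi_{L^{\pe}/L}(\Fitt_{\RR^{\pe}}(M))$ for any finitely generated $\RR^{\pe}$-module $M$; and compatibility with twists, which gives $\Fitt_{\RR^{\pe}}(\ttilde{\chi_{E, \pe}}^{*}(M)) = \ttilde{\chi_{E, \pe}}(\Fitt_{\RR^{\pe}}(M))$. Feeding these into the isomorphism above and invoking the first display yields the desired formula.

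The principal obstacle is the middle step: one must establish the identification between the dual Selmer group and the twisted descent of $X_S(L^{\pe})$ as a precise equality of $\Z_p[\Gal(L/K)]$-modules, not merely up to pseudo-isomorphism or finite error, since shifted Fitting ideals are not in general pseudo-isomorphism invariants. The delicate points are the local behaviour at $\ol{\pe}$ (where the Selmer condition is nontrivial and $E$ has ordinary reduction) and at the primes of bad reduction lying in $\Sigma \setminus S$: one must verify prime by prime that the relevant local $H^1$'s inflate cleanly through $L^{\pe}/L$ and match the $S$-ramified condition defining $X_S(L^{\pe})$. Once this matching is verified place by place, the formal manipulations of the final step are straightforward.
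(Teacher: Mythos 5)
The deduction of Theorem \ref{thm:75} in the paper does not proceed through Theorem \ref{thm:34} at all; the introduction notes explicitly that ``the key ingredient of the proof is Theorem \ref{thm:42} rather than Theorem \ref{thm:34},'' and the remark at the end of Subsection \ref{subsec:67} states it ``seems impossible to deduce Corollary \ref{cor:73} directly from Theorem \ref{thm:34} because we do not have an exact control theorem between $X_S(L_{\infty})$ and $X_S(\ol{L_{\infty}})$.'' Your opening step --- asserting that Theorem \ref{thm:34} ``applies to $L^{\pe}/K$ and yields'' the displayed one-variable formula --- is exactly the shortcut the paper rules out. Theorem \ref{thm:34} is stated with $\RR = \Z_p[[\Gal(L_{\infty}/K)]]$, i.e.\ for the two-variable tower; there is no statement there about a one-variable $\Z_p$-extension $L^{\pe}$, and there is no exact control theorem for $X_S$ that would let you specialize the module-level identity from $\RR$ to $\RR^{\pe}$. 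Moreover, as Remark \ref{rem:100} points out, over $\ol{\RR}$ the module $Z_{\pe}(\ol{L_{\infty}}/K)$ is not pseudo-null and need not have finite projective dimension, so the analogue of \eqref{eq:108} and the bookkeeping with $\SF{n}{}$ that underlies the two-variable argument are not available over $\RR^{\pe}$ either.

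The one-variable formula you want (Corollary \ref{cor:73}) is indeed true, but its proof has to go back to the perfect complex. The paper sets $\ol{C_{\Sigma,S}} = C_{\Sigma,S}\otimes^{\mathbb{L}}_{\RR} \ol{\RR}$, uses the compatibility of determinants with derived base change to pass $\Det_{\RR}^{-1}(C_{\Sigma,S}) = \iL_{\Sigma,S}$ (Theorem \ref{thm:42}) down to $\Det_{\ol{\RR}}^{-1}(\ol{C_{\Sigma,S}}) = \iL_{\Sigma,S}(\ol{L_{\infty}}/K)$, and only then redoes the exact-sequence argument of Corollary \ref{cor:41} over $\ol{\RR}$ using Proposition \ref{prop:62}. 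The descent is legitimate at the level of complexes because $C_{\Sigma,S}$ base-changes cleanly even though $X_S$ does not. This is the missing idea in your proposal: you need a one-variable version of the \emph{determinant} statement, obtained by descending the complex, not a one-variable application of the Fitting-ideal theorem. Your steps (ii) and (iii) --- identifying $\Sel_S(E/L^{\pe})[\pe^{\infty}]$ with $\Hom(X_S(L^{\pe}), E[\pe^{\infty}])$ and invoking the exact control theorem from $L^{\pe}$ down to $L$ --- match Propositions \ref{prop:71} and \ref{prop:72} and are correct, as is the final formal manipulation with twists and surjective base change for Fitting ideals.
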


The definitions of $Z_{\Sigma \setminus S}^0(L^{\pe}/K)$ and $\iL_{\Sigma, S}(L^{\pe}/K)$ will be given in Section \ref{sec:51}.
Note that the key ingredient of the proof is Theorem \ref{thm:42} rather than Theorem \ref{thm:34}.

\begin{rem}
The fractional ideal $\Fitt^{[1]}_{\RR^{\pe}} (Z_{\Sigma \setminus S}^0(L^{\pe}/K))$ is not contained in $\RR^{\pe}$ in general, while the product with $\iL_{\Sigma, S}(L^{\pe}/K)$ is contained in $\RR^{\pe}$.
In other (quite rough) words, $\Fitt^{[1]}_{\RR^{\pe}} (Z_{\Sigma \setminus S}^0(L^{\pe}/K))$ has poles, but there are canceled by the zeros of $\iL_{\Sigma, S}(L^{\pe}/K)$.
For that reason, the right hand side of the formula in Theorem \ref{thm:75} cannot be decomposed as the product of two ideals of $\Z_p[\Gal(L/K)]$.
\end{rem}

\subsection{Outline of this paper}\label{subsec:201}

The rest of this paper is organized as follows.
In Section \ref{sec:51}, in order to complete the statements of the main theorems, we give precise definitions of several objects like $\Fitt^{[n]}$.
In Section \ref{sec:02}, we show the general relation between determinant modules and Fitting ideals.
In Section \ref{sec:105}, we review properties of various arithmetic complexes in derived categories.
In Section \ref{sec:107}, we prove Theorem \ref{thm:34} in the way we already outlined.
In Section \ref{sec:106}, we deduce Theorem \ref{thm:75} from Theorem \ref{thm:34}.

\section{Definitions and preliminaries}\label{sec:51}

In this section, we give the definitions and basic properties of the ingredients in the statements of the main theorems in Section \ref{sec:01}.

\subsection{Shifts of $\Fitt$}\label{subsec:91}

In this subsection, we review the results of \cite{Kata_05} on shifts of Fitting invariants.

Though we can deal with more general commutative rings, for the sake of simplicity, we only consider the following situation.
Let $G$ be a profinite abelian group and consider the completed group ring $R = \Z_p[[G]]$.
Suppose that $G$ has an open subgroup $G'$ which is isomorphic to $\Z_p^d$ for some $d \geq 0$.
Put $R' = \Z_p[[G']]$, which is isomorphic to the ring of formal power series of $d$ variables over $\Z_p$.
Note that the choice of $G'$ is auxiliary and the following argument does not depend on $G'$.

An $R$-module is said to be torsion (resp. pseudo-null) if it is torsion (resp. pseudo-null) as an $R'$-module.
For an $R$-module $X$, we denote by $\pd_{R}(X)$ the projective dimension of $X$ over $R$.
As already mentioned, any invertible ideal of $R$ is principal.

\begin{thm}[{ \cite[Theorem 2.6]{Kata_05}}]\label{thm:81}
For each finitely generated torsion $R$-module $X$ and an integer $n \geq 0$, we define a fractional ideal $\vSF{n}{R}(X)$ as follows.
Take an exact sequence 
\[
0 \to Y \to P_1 \to \dots \to P_n \to X \to 0
\]
of finitely generated torsion $R$-modules such that $\pd_{R}(P_i) \leq 1$ for $1 \leq i \leq n$.
Then we define
\[
\vSF{n}{R}(X) = \left( \prod_{i=1}^n \Fitt_{R}(P_i)^{(-1)^i} \right) \Fitt_{R}(Y).
\]
Then this is well-defined, that is, independent from the choice of the resolution.
\end{thm}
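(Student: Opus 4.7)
My plan splits the proof into three parts: existence of a resolution of the prescribed form, invertibility of each $\Fitt_R(P_i)$ so that the formula defines a fractional ideal, and independence from the choice of resolution.

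Existence follows by induction on $n$ from a single-step construction. Given a finitely generated torsion $R$-module $X$, pick a non-zero-divisor $\xi \in R$ annihilating $X$; such $\xi$ exists because $X$ is $R'$-torsion and $R$ is module-finite over the regular ring $R'$. For any surjection $R^a \twoheadrightarrow X$ with kernel $K$, the inclusion $\xi R^a \subseteq K$ yields
\[
0 \to K/\xi R^a \to (R/\xi R)^a \to X \to 0,
\]
whose middle term is torsion of $\pd_R \leq 1$ via the resolution $0 \to R \xrightarrow{\xi} R \to R/\xi R \to 0$, and whose kernel is again finitely generated torsion. For invertibility: each $P_i$ admits a square free resolution $0 \to R^a \xrightarrow{A_i} R^a \to P_i \to 0$ (ranks agree by $R'$-torsion, and $\det A_i$ is a non-zero-divisor by injectivity), so $\Fitt_R(P_i) = (\det A_i)$ is a principal invertible ideal.

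Independence is the main step; my strategy combines a multiplicativity identity with induction on $n$. The multiplicativity asserts that for any short exact sequence $0 \to A \to B \to C \to 0$ of finitely generated torsion $R$-modules in which $C$ has $\pd_R \leq 1$, one has $\vSF{m}{R}(B) = \vSF{m}{R}(A) \cdot \Fitt_R(C)^{(-1)^m}$; in the special case $A = 0$ this recovers $\vSF{m}{R}(C) = \Fitt_R(C)^{(-1)^m}$ via the trivial resolution $0 \to 0 \to \cdots \to 0 \to C \to C \to 0$. Granted this identity, the induction on $n$ proceeds as follows. The base case $n = 0$ is trivial. For the inductive step, given two length-$n$ resolutions of $X$, let $Z = \ker(P_n \to X)$ and $Z' = \ker(P'_n \to X)$; the inductive hypothesis makes $\vSF{n-1}{R}(Z)$ and $\vSF{n-1}{R}(Z')$ well-defined invariants, and the theorem reduces to proving
\[
\Fitt_R(P_n)^{(-1)^n} \vSF{n-1}{R}(Z) = \Fitt_R(P'_n)^{(-1)^n} \vSF{n-1}{R}(Z').
\]
Form the pullback $Q = P_n \times_X P'_n$; it sits in short exact sequences $0 \to Z \to Q \to P'_n \to 0$ and $0 \to Z' \to Q \to P_n \to 0$, in both of which the rightmost term has $\pd_R \leq 1$. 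Applying the multiplicativity identity at $m = n-1$ yields
\[
\vSF{n-1}{R}(Z) \Fitt_R(P'_n)^{(-1)^{n-1}} = \vSF{n-1}{R}(Q) = \vSF{n-1}{R}(Z') \Fitt_R(P_n)^{(-1)^{n-1}},
\]
which is equivalent to the desired equality.

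The main obstacle I anticipate is the multiplicativity identity itself. The natural attempt is to combine a length-$m$ resolution of $A$ with a suitable length-$m$ resolution of $C$ via a horseshoe construction, producing a length-$m$ resolution of $B$ whose $i$-th term is the direct sum of the $i$-th terms of the two input resolutions; the Fitting ideals of direct sums then multiply, giving the claim. However, because the $P_i$'s in admissible resolutions are only of $\pd_R \leq 1$ rather than projective, the map lifts underlying the classical horseshoe lemma are not automatic, and there is a genuine $\operatorname{Ext}^1$-obstruction to lifting the surjection from the last term of the $C$-resolution to $B$. The cleanest resolution is probably to realize the framework inside the derived category: a finitely generated torsion $R$-module $X$ with the prescribed resolution corresponds to a perfect complex with cohomology $X$, the fractional ideal $\vSF{n}{R}(X)$ is, up to a sign, the determinant of this perfect complex, and both the multiplicativity identity and the independence assertion reduce to the fact that the determinant of a perfect complex is an invariant of its quasi-isomorphism class.
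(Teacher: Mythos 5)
Your three-part decomposition, the existence and invertibility arguments, and the pullback argument reducing independence to the multiplicativity identity are all correct. But the multiplicativity identity itself, which you rightly flag as the crux, is left with a genuine gap, and the derived-category escape route you propose does not work. The complex $Y \to P_1 \to \cdots \to P_n$ (quasi-isomorphic to $X[-n]$) is \emph{not} a perfect complex: $Y$ is an arbitrary finitely generated torsion $R$-module and in general has infinite projective dimension, which is precisely why the shifted Fitting ideal is a nontrivial invariant rather than a determinant. Hence $\Fitt_R(Y)$, and with it $\vSF{n}{R}(X)$, is not the determinant of a perfect complex, and quasi-isomorphism invariance of $\Det_R$ says nothing about it. (Compare Theorem \ref{thm:104} in the body of the paper, which relates $\Det_R$ only to the Fitting ideals of modules of \emph{finite} projective dimension.)

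The identity can be established inside your own framework by reusing the pullback trick, not the horseshoe. For $m=0$ the statement $\Fitt_R(B)=\Fitt_R(A)\Fitt_R(C)$ (for $0\to A\to B\to C\to 0$ with $C$ torsion and $\pd_R(C)\le 1$) is provable by a presentation-level horseshoe, which only needs projectivity of the top-degree free modules: from $R^{m_1}\xrightarrow{\alpha}R^{m_0}\to A\to 0$ and a free square resolution $0\to R^a\xrightarrow{\gamma}R^a\to C\to 0$ one gets a block-upper-triangular presentation of $B$, and a generalized Laplace expansion shows every nonzero maximal minor is $\det(\alpha_J)\det\gamma$. For $m\ge 1$, given an admissible resolution $0\to Y\to P_1\to\cdots\to P_m\to B\to 0$, set $P_m':=P_m\times_B A$; the sequence $0\to P_m'\to P_m\to C\to 0$ together with the long exact $\operatorname{Ext}$-sequence gives $\pd_R(P_m')\le 1$, and $0\to Y\to P_1\to\cdots\to P_{m-1}\to P_m'\to A\to 0$ is an admissible resolution of $A$ sharing $Y$ and $P_1,\dots,P_{m-1}$. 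Dividing the two defining formulas and applying the $m=0$ case to $0\to P_m'\to P_m\to C\to 0$ yields $\vSF{m}{R}(B)=\vSF{m}{R}(A)\Fitt_R(C)^{(-1)^m}$. This uses the inductive hypothesis that $\vSF{m}{R}$ is already well-defined, which is exactly what you have at $m=n-1$ inside the induction on $n$, so the loop closes — but only with this proof of multiplicativity, not the one you sketched.
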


We also use the following variant.

\begin{thm}[{\cite[Theorem 3.20, Corollary 3.21]{Kata_05}}] \label{thm:82}
For each finitely generated torsion $R$-module $X$ and every integer $n$, we have a fractional ideal $\SF{n}{R}(X)$ satisfying the following properties.

(1) 
If $X$ satisfies $\pd_{R'}(X) \leq 1$, then we have $\SF{0}{R}(X) = \Fitt_{R}(X)$.

(2)
If $\pd_{R}(X) < \infty$, then $\SF{n}{R}(X)$ is an invertible ideal for any $n$.
Moreover, we have $\SF{n}{R}(X) = \SF{0}{R}(X)^{(-1)^n}$.

(3)
Let $0 \to Y \to Q_1 \to \dots \to Q_d \to X \to 0$ be an exact sequence of finitely generated torsion $R$-modules.
Suppose $\pd_{R}(Q_i) < \infty$ for $1 \leq i \leq d$.
Then we have
\[
\SF{n}{R}(X) = \left( \prod_{i=1}^d \SF{n-d+i}{R}(Q_i) \right) \SF{n-d}{R}(Y)
\]
for any $n$.
\end{thm}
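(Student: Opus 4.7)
The plan is to extend the construction of $\vSF{n}{R}$ in Theorem \ref{thm:81} by allowing the resolving modules to have arbitrary finite projective dimension over $R$, rather than the stricter $\pd_R \le 1$ used there, and to extend the index range to all integers $n$. For $n \ge 0$, I would first show that every finitely generated torsion $R$-module $X$ admits a surjection from a torsion $R$-module of finite projective dimension; this is routine, for example by lifting to $R$ a suitable finite-pd torsion $R'$-module surjecting onto $X$ as $R'$-modules. Iterating yields an exact sequence
\[
0 \to Y \to Q_n \to \cdots \to Q_1 \to X \to 0
\]
with $\pd_R(Q_i) < \infty$ for each $i$. Each $Q_i$ admits a further finite resolution by modules of $\pd_R \le 1$, and splicing these into the above gives a long resolution of $X$ whose alternating product of ordinary Fitting ideals (in the sense of Theorem \ref{thm:81}) defines $\SF{n}{R}(X)$.

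For $n < 0$, I would proceed dually, building coresolutions
\[
0 \to X \to Q^1 \to \cdots \to Q^{|n|} \to Y \to 0
\]
by finite-pd modules and reading off $\SF{n}{R}(X)$ from an analogous alternating product. Existence of coresolutions reduces to that of resolutions via a Matlis-type duality over the regular local subring $R'$, which preserves the classes of finitely generated torsion modules and of modules of finite projective dimension.

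The main obstacle is well-definedness: one must show that $\SF{n}{R}(X)$ is independent of the choice of (co)resolution. For two (co)resolutions of $X$ of the same length, this is reduced to the known well-definedness of $\vSF{n}{R}$ by a horseshoe/Schanuel construction producing a common refinement, so that the alternating products match term by term. The additional compatibility needed to patch the two definitions into a single invariant $\SF{n}{R}$, i.e.\ checking that lengthening a (co)resolution by one step changes the alternating product only by a factor that can be absorbed consistently, requires a careful cancellation identity among Fitting ideals; this is essentially where the content of property (2) below is packaged into the very definition.

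Once $\SF{n}{R}(X)$ is defined, the three listed properties are formal. Property (1) follows from using a length-one resolution by a finite-pd module under the assumption $\pd_{R'}(X) \le 1$, after verifying that the correction terms cancel against $\Fitt_R(X)$. Property (2) is immediate when $\pd_R(X) < \infty$, since $X$ may itself serve as a single resolving term and the alternating product collapses to $\Fitt_R(X)^{(-1)^n} = \SF{0}{R}(X)^{(-1)^n}$. Property (3) is proven by splicing the given sequence of length $d$ into any (co)resolution of $Y$ to obtain one of $X$ of length $d$ greater; the resulting alternating product telescopes directly into the stated multiplicative formula.
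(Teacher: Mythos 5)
Since the paper only cites this theorem from \cite[Theorem~3.20, Corollary~3.21]{Kata_05} and gives no proof, there is no internal argument to compare against; I evaluate your reconstruction on its own merits. It has genuine gaps.

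First, the construction you propose does not actually single out $\SF{n}{R}$. You build a resolution $0\to Y\to Q_n\to\cdots\to Q_1\to X\to 0$ by finite--pd torsion modules, replace each $Q_i$ by a $\pd_R\le 1$ resolution, splice, and take the alternating product of $\Fitt_R$'s along the spliced complex. But that alternating product is, by definition, $\vSF{n'}{R}(X)$ where $n'$ is the \emph{total spliced length}, and $n'$ depends on the choices: a module of $\pd_R\le 1$ can be resolved in length $1$ (by itself) or in length $2$ (e.g.\ $0\to P'\to P\to Q_i\to 0$), so even the parity of $n'$ is not determined by $n$. Since $\vSF{n'}{R}(X)$ genuinely changes with $n'$, the quantity you produce is not well-defined. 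More to the point, $\SF{n}{R}$ is \emph{not} equal to $\vSF{n}{R}$ in general --- Lemma~\ref{lem:83} says they agree only when $\pd_{R'}(X)\le n+1$ --- so a construction that reduces everything to some $\vSF{n'}{R}$ cannot recover $\SF{n}{R}$. The multiplicative structure stated in~(3), namely $\SF{n}{R}(X)=\bigl(\prod_{i=1}^d\SF{n-d+i}{R}(Q_i)\bigr)\SF{n-d}{R}(Y)$, shows the correct ``atom'' at each step of a finite--pd resolution is a \emph{shifted} $\SF{}{}$ of the resolving module, not its $\Fitt_R$; tracking the shift index is the actual content of the construction, and your splicing loses it.

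Second, your verification of~(2) contains an explicit false equality. You write that the product ``collapses to $\Fitt_R(X)^{(-1)^n}=\SF{0}{R}(X)^{(-1)^n}$.'' But $\Fitt_R(X)\ne\SF{0}{R}(X)$ for finite--pd $X$ in general: part~(1) gives equality only under $\pd_{R'}(X)\le 1$, and Proposition~\ref{prop:22} shows that for a pseudo-null $Q$ of finite $\pd_R$ one has $\SF{0}{R}(Q)=R$, whereas $\Fitt_R(Q)$ is typically a proper (height~$\ge 2$) ideal. Finally, the negative-$n$ branch via Matlis-type coresolutions is asserted rather than established, and you concede that the compatibility at $n=0$ between the two halves ``is essentially where the content of property~(2)\dots is packaged into the very definition'' --- which is circular, since~(2) is one of the things to be proved. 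The theorem really does require the careful degree bookkeeping carried out in \cite{Kata_05}; the splice-and-take-$\Fitt$ shortcut does not reproduce it.
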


The role of $\SF{n}{\RR}$ in this paper is auxiliary compared to $\Fitt^{[n]}_{\RR}$.
An advantage is that $\SF{n}{\RR}$ behaves better with respect to exact sequences.
A disadvantage is that we do not have $\SF{0}{\RR}(X) = \Fitt_{\RR}(X)$ in general, while we always have $\Fitt^{[0]}_{\RR}(X) = \Fitt_{\RR}(X)$.
We have a sufficient condition for $\SF{n}{\RR}$ and $\Fitt^{[n]}_{\RR}$ to coincide:

\begin{lem}[{\cite[Remark 3.25]{Kata_05}}]\label{lem:83}
Let $n$ be non-negative.
Then the equality 
\[
\SF{n}{R}(X) = \vSF{n}{R}(X)
\]
holds if $\pd_{R'}(X) \leq n + 1$.
\end{lem}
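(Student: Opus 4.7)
The plan is to exploit the exact-sequence compatibility of $\SF{n}{R}$ given by Theorem \ref{thm:82}(3), applied to the very resolution used in the definition of $\vSF{n}{R}(X)$ in Theorem \ref{thm:81}, and then to invoke the identification $\SF{0}{R} = \Fitt_R$ under small projective dimension (Theorem \ref{thm:82}(1)) for every factor that appears.

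Concretely, I would fix an exact sequence
\[
0 \to Y \to P_1 \to \dots \to P_n \to X \to 0
\]
with $\pd_R(P_i) \leq 1$, whose existence is guaranteed by the well-definedness of $\vSF{n}{R}(X)$. Since $\pd_R(P_i) < \infty$, Theorem \ref{thm:82}(3) with $d = n$ and $Q_i = P_i$ yields
\[
\SF{n}{R}(X) = \left( \prod_{i=1}^n \SF{i}{R}(P_i) \right) \SF{0}{R}(Y).
\]
Each $P_i$ has $\pd_R(P_i) \leq 1$ and hence $\pd_{R'}(P_i) \leq 1$, because $R$ is free of finite rank over $R'$ (the index $[G:G']$ being finite). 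Theorem \ref{thm:82}(2) and (1) then give $\SF{i}{R}(P_i) = \SF{0}{R}(P_i)^{(-1)^i} = \Fitt_R(P_i)^{(-1)^i}$, matching the sign pattern in the formula defining $\vSF{n}{R}(X)$.

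The main step, and the only place where the hypothesis $\pd_{R'}(X) \leq n+1$ is actually used, is to verify that $\pd_{R'}(Y) \leq 1$, so that Theorem \ref{thm:82}(1) also delivers $\SF{0}{R}(Y) = \Fitt_R(Y)$. I would split the resolution into short exact sequences $0 \to K_i \to P_{i+1} \to K_{i+1} \to 0$ for $0 \leq i \leq n-1$, with $K_0 = Y$ and $K_n = X$, and iterate the standard inequality
\[
\pd_{R'}(K_i) \leq \max\bigl(\pd_{R'}(P_{i+1}),\, \pd_{R'}(K_{i+1}) - 1\bigr) \leq \max\bigl(1,\, \pd_{R'}(K_{i+1}) - 1\bigr),
\]
descending from $\pd_{R'}(K_n) \leq n+1$ to $\pd_{R'}(K_0) \leq 1$, losing exactly one at each step. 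Combining these observations gives $\SF{n}{R}(X) = \bigl( \prod_{i=1}^n \Fitt_R(P_i)^{(-1)^i} \bigr) \Fitt_R(Y) = \vSF{n}{R}(X)$, as desired. The only genuine content is the dimension-shifting bound for $Y$; everything else is a direct bookkeeping consequence of Theorems \ref{thm:81} and \ref{thm:82}.
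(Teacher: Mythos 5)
The paper itself gives no proof of Lemma~\ref{lem:83}; it only cites \cite[Remark 3.25]{Kata_05}, so there is no in-paper argument to compare against. Your proof is correct and is the natural route given Theorems~\ref{thm:81} and~\ref{thm:82}: apply Theorem~\ref{thm:82}(3) to the defining resolution, identify $\SF{i}{R}(P_i)$ and $\SF{0}{R}(Y)$ with Fitting ideals via Theorem~\ref{thm:82}(1)--(2), and the only nontrivial verification is the dimension-shifting bound $\pd_{R'}(Y)\le 1$, which you derive correctly from $\pd_{R'}(X)\le n+1$ by peeling off the $P_i$ (using that $R$ is finite free over $R'$ so that $\pd_R(P_i)\le 1$ forces $\pd_{R'}(P_i)\le 1$).
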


We record a simple proposition.

\begin{prop}\label{prop:22}
Let $Q$ be a pseudo-null $R$-module such that $\pd_{R}(Q) < \infty$.
Then we have $\SF{0}{R}(Q) = R$.
\end{prop}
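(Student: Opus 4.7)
The plan is to resolve $Q$ by torsion $R$-modules of projective dimension $1$, apply the exact sequence formula in Theorem \ref{thm:82}(3), and then use pseudo-nullness to force the resulting exponent to vanish.

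Since $Q$ is pseudo-null, its $R'$-annihilator has height at least $2$, so I may choose a non-zero-divisor $f \in R'$ with $fQ = 0$; because $R$ is free over $R'$, $f$ remains a non-zero-divisor in $R$. The standard change-of-rings formula for projective dimension gives $\pd_{R/fR}(Q) = \pd_R(Q) - 1 < \infty$, so $Q$ admits a finite resolution
\[
0 \to F_m \to F_{m-1} \to \cdots \to F_0 \to Q \to 0
\]
by finitely generated projective $R/fR$-modules. Working componentwise on the local factors of the semi-local ring $R$, I may assume each $F_i \cong (R/fR)^{k_i}$ is free. Viewed as $R$-modules, the $F_i$ are torsion with $\pd_R(F_i) = 1$ and $\Fitt_R(F_i) = f^{k_i} R$.

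Applying Theorem \ref{thm:82}(3) to this resolution with $n = 0$ and invoking Theorem \ref{thm:82}(1)--(2) to substitute $\SF{k}{R}(F_i) = (f^{k_i} R)^{(-1)^k}$, I obtain
\[
\SF{0}{R}(Q) = f^{e} R, \qquad e = \sum_{i=0}^{m} (-1)^i k_i.
\]
The final step is to verify $e = 0$. Because $fQ = 0$ and $Q$ is pseudo-null over $R'$, the module $Q$ is torsion over $R/fR$: its support in $\Spec(R/fR)$ has codimension at least $1$. Tensoring the above resolution with the total ring of fractions of $R/fR$ therefore yields an exact sequence of finite-dimensional free modules whose alternating rank sum equals the generic rank of $Q$, namely $0$. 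Hence $e = 0$ and $\SF{0}{R}(Q) = R$.

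The main obstacle, as I see it, is the bookkeeping in the non-domain case: when $R/fR$ has several minimal primes, the vanishing of $e$ must be checked componentwise, but pseudo-nullness of $Q$ still ensures generic triviality on each component, so the alternating rank sum vanishes on each factor. A similar case distinction resolves the possibility that projectives over $R/fR$ are not globally free, by passing to the local factors of $R$ one at a time.
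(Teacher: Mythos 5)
Your proof is correct, but it takes a genuinely different route from the paper's. The paper's argument is a localization argument: starting from \emph{any} resolution $0 \to P_d \to \cdots \to P_1 \to Q \to 0$ with $\pd_R(P_i) \le 1$, it localizes at each height-one prime $\qu$ of $R$, uses pseudo-nullness to kill $Q_\qu$ so that the $(P_i)_\qu$ form an exact sequence, concludes that $\prod_i \Fitt_R(P_i)^{(-1)^i}$ is trivial after localizing at every $\qu$, and finishes with Lemma \ref{lem:23} (invertible ideals agreeing at all height-one primes are equal). You instead construct a \emph{specific} resolution by free $R/fR$-modules (after choosing a non-zero-divisor $f$ annihilating $Q$ and invoking the change-of-rings theorem for projective dimension), read off $\Fitt_R(F_i) = f^{k_i}R$ explicitly, reduce to the identity $\sum_i (-1)^i k_i = 0$, and prove that identity by a rank count over the total ring of fractions of $R/fR$, where pseudo-nullness of $Q$ again enters to kill $Q$ generically. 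The paper's route is shorter and more conceptual, using no information about the shape of the resolution beyond its existence; yours is more explicit and self-contained but requires the change-of-rings theorem for $\pd$ and some extra bookkeeping over the (generally non-domain, possibly reducible) quotient $R/fR$, which you correctly flag. Both are valid; the paper's version would be the one I'd recommend as cleaner, since Lemma \ref{lem:23} is already set up for exactly this purpose.
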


\begin{proof}
By $\pd_{R}(Q) < \infty$, there exists an exact sequence
\[
0 \to P_d \to \dots \to P_2 \to P_1 \to Q \to 0
\]
of finitely generated torsion $R$-modules with $\pd_{R}(P_i) \leq 1$.
For any prime ideal $\qu$ of $R$ of height 1, since $Q_{\qu} = 0$, we have an exact sequence
\[
0 \to (P_d)_{\qu} \to \dots \to (P_2)_{\qu} \to (P_1)_{\qu} \to 0.
\]
It follows that
\[
\left(\prod_{1 \leq i \leq d} \Fitt_{R}(P_i)^{(-1)^i}\right) R_{\qu}
= \prod_{1 \leq i \leq d} \Fitt_{R_{\qu}}((P_i)_{\qu})^{(-1)^i} 
= R_{\qu}.
\]
Then Lemma \ref{lem:23} below implies
\[
\prod_{1 \leq i \leq d} \Fitt_{R}(P_i)^{(-1)^i} = R.
\]
By the properties of $\SF{n}{\RR}$, the left hand side equals to $\SF{0}{R}(Q)$.
\end{proof}

\begin{lem}\label{lem:23}
Let $I, J$ be invertible ideals of $R$.
Suppose that $I R_{\qu} = J R_{\qu}$ holds for every height one prime ideal $\qu$.
Then we have $I = J$.
\end{lem}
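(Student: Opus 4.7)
The plan is to reduce the lemma to a purely commutative-algebraic statement about the Cohen--Macaulay ring $R$, and then prove that statement by a short colon-ideal argument. First, since $R$ is a product of local rings, every invertible $R$-module of rank one is free, so I can write $I = fR$ and $J = gR$ for elements $f,g$ that are units in the total ring of fractions $K$ of $R$. The hypothesis $IR_{\qu} = JR_{\qu}$ for every height-one prime $\qu$ becomes: $h := f/g \in K^{\times}$ satisfies $h, h^{-1} \in R_{\qu}$ for all such $\qu$, and the desired conclusion $I = J$ is equivalent to $h \in R^{\times}$. Hence it suffices to prove the claim that any $h \in K$ lying in $R_{\qu}$ for every height-one prime $\qu$ already lies in $R$; applying this claim to both $h$ and $h^{-1}$ then finishes the job.

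For this claim I would write $h = a/b$ with $a, b \in R$ and $b$ a nonzerodivisor, and look at the colon ideal
\[
\mathfrak{d} := (bR :_R a) = \{\, r \in R : ra \in bR \,\},
\]
so that $h \in R$ is equivalent to $1 \in \mathfrak{d}$. The hypothesis gives $\mathfrak{d} \not\subseteq \qu$ for every height-one prime $\qu$; and since $b \in \mathfrak{d}$ is a nonzerodivisor, $\mathfrak{d}$ avoids every minimal prime as well, so every minimal prime over $\mathfrak{d}$ has height at least $2$. The structural input is that $R$ is Cohen--Macaulay, which follows because $R = \Z_p[[G]]$ is a free module of finite rank over the regular local ring $R' = \Z_p[[G']] \cong \Z_p[[T_1,\ldots,T_d]]$ (decomposing along coset representatives of $G/G'$), hence finite and flat over the CM base $R'$. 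Because $b$ is $R$-regular in the CM ring $R$, the associated primes of $R/bR$ coincide with the minimal primes over $bR$ and all have height one. Since $\mathfrak{d}$ is contained in none of them, prime avoidance produces $c \in \mathfrak{d}$ that is a nonzerodivisor on $R/bR$. Then $ca \in bR$ forces $c \cdot \bar a = 0$ in $R/bR$, hence $\bar a = 0$, i.e., $a \in bR$, giving $h \in R$.

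The main obstacle is packaging this commutative-algebra input cleanly: one has to know that $R$ is Cohen--Macaulay (or at least $S_2$), for which the key fact is the freeness of $\Z_p[[G]]$ over $\Z_p[[G']]$ for $G' \cong \Z_p^d$ open in $G$. A slicker alternative would be to observe that $R$ being $S_2$ implies $R = \bigcap_{\qu} R_{\qu}$ as subrings of $K$, from which the claim is immediate and the conclusion $I = \bigcap_{\qu} IR_{\qu} = \bigcap_{\qu} JR_{\qu} = J$ drops out; but the colon-ideal route sketched above avoids invoking the general $S_2$-intersection formalism and only uses Krull's principal ideal theorem plus prime avoidance.
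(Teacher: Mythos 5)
Your proof is correct, and it rests on exactly the same commutative-algebra input as the paper's, namely the unmixedness of $R/bR$ for a nonzerodivisor $b$ (equivalently the $S_2$ condition on $R$, which you correctly derive from $R$ being finite flat over the regular ring $R'$). The paper packages this fact differently: it observes that $I/(I\cap J)$ injects into $R/J$, that the hypothesis forces $I/(I\cap J)$ to be pseudo-null, and that $R/J$ has no nonzero pseudo-null submodule — which is precisely the statement that all associated primes of $R/J$ have height one — so $I \subseteq J$, and then symmetry finishes. You instead normalize $I=fR$, $J=gR$, pass to $h=f/g$, and run a colon-ideal/prime-avoidance argument to show $h\in R_\qu$ for all height-one $\qu$ implies $h\in R$. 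The paper's route is shorter (three lines) and avoids choosing generators, but leaves the unmixedness fact implicit; yours is longer but makes the structural input (CM-ness via freeness over $R'$) completely explicit, which is genuinely useful since the paper never states why $R/J$ has no pseudo-null submodules. One small remark: the paper's injection $I/(I\cap J)\hookrightarrow R/J$ tacitly assumes $I,J\subseteq R$, which requires the same clearing-of-denominators normalization you perform at the start, so the two proofs are on equal footing there.
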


\begin{proof}
This lemma is more or less well-known, but we give a proof for convenience.
Consider the natural injective map $I / (I \cap J) \hookrightarrow R/J$.
By assumption, the module $I / (I \cap J)$ is pseudo-null, while $R/J$ does not contain non-trivial pseudo-null submodules.
Hence $I \subset J$.
By symmetry, we conclude that $I = J$ holds.
\end{proof}

\subsection{Modules $Z^0$}

Let us return to the setting in Section \ref{sec:01}.
We define various $\RR$-modules in the same way as in \cite[Subsection 1.1]{GKK_09}.

\begin{defn}\label{defn:52}
Let $\FF$ be an abelian extension of $K$.
For each finite place $v$ of $K$, let $D_v(\FF/K)$ be the decomposition group of $\FF/K$ at $v$ and put
\[
Z_v(\FF/K) = \Z_p[[\Gal(\FF/K)/D_v(\FF/K)]].
\]
We regard $Z_v(\FF/K)$ as a (cyclic) $\Z_p[[\Gal(\FF/K)]]$-module.

For any finite set $A$ of finite places of $K$, 
put 
\[
Z_{A}(\FF/K) = \bigoplus_{v \in A} Z_v(\FF/K).
\]
When $A \neq \emptyset$, we define $Z_{A}^0(\FF/K)$ as the kernel of the augmentation map $Z_{A}(\FF/K) \to \Z_p$.
Then we have an exact sequence
\[
0 \to Z_{A}^0(\FF/K) \to Z_{A}(\FF/K) \to \Z_p \to 0.
\]
These are also regarded as $\Z_p[[\Gal(\FF/K)]]$-modules.
\end{defn}

\begin{rem}\label{rem:85}
Suppose $\FF = L_{\infty}$, which is a two-variable extension of $K$.

If $v$ is outside $p$, then $Z_v(L_{\infty}/K)$ is torsion but not pseudo-null as an $\RR$-module.

On the other hand, suppose $v$ is one of $\pe, \ol{\pe}$.
Then it is well-known that $v$ splits finitely in $L_{\infty}/K$, so $Z_v(L_{\infty}/K)$ is a pseudo-null $\RR$-module.
By the local class field theory, $D_v(L_{\infty}/K)$ is a quotient of the profinite completion of $K_v^{\times} \simeq \Q_p^{\times}$.
It follows that $D_v(L_{\infty}/K)$ is $p$-torsion-free (see also \cite[Lemma 5.14]{Kata_05}).
Therefore, we have $\pd_{\RR}(Z_v(L_{\infty}/K)) < \infty$.
In particular, Proposition \ref{prop:22} shows that 
\begin{equation}\label{eq:108}
\SF{0}{\RR}(Z_{v}(L_{\infty}/K)) = \RR.
\end{equation}
\end{rem}

\begin{rem}\label{rem:86}
Suppose $\FF = L^{\pe} = L(E[\pe^{\infty}])$, which is a one-variable extension of $K$, as in Subsection \ref{subsec:302}.
Then every finite place $v$ splits finitely in $L^{\pe}$, so $Z_v(L^{\pe}/K)$ is torsion but not pseudo-null as an $\RR^{\pe}$-module.
\end{rem}

Next we show a lemma on the $\Fitt^{[1]}$ in the right hand side of Theorem \ref{thm:34}.
The first assertion enables us to deduce Theorem \ref{thm:33} from Theorem \ref{thm:34}.

\begin{lem}\label{lem:79}
The following are true.

(1) In the case where $S = \Sigma_0$, we have
\[
 \Fitt^{[1]}_{\RR}(Z_{\{\pe\}}^0(L_{\infty}/K)) = \Fitt^{[2]}_{\RR}(\Z_p).
\]

(2) In the case where $S \subsetneqq \Sigma_0$, we have
\[
\Fitt^{[1]}_{\RR}(Z_{\Sigma \setminus S}^0(L_{\infty}/K))
= \Fitt^{[1]}_{\RR}(Z_{\Sigma_0 \setminus S}^0(L_{\infty}/K)).
\]
\end{lem}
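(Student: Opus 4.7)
The strategy is to pass through the auxiliary invariants $\SF{n}{\RR}$ of Theorem \ref{thm:82}, which are better behaved than $\vSF{n}{\RR}$ in short exact sequences when one of the terms has finite projective dimension.

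For part (1), the plan is to apply Theorem \ref{thm:82}(3) with $d=1$ to the augmentation exact sequence
\[
0 \to Z_{\{\pe\}}^0(L_\infty/K) \to Z_{\{\pe\}}(L_\infty/K) \to \Z_p \to 0.
\]
By Remark \ref{rem:85}, $Z_{\{\pe\}}(L_\infty/K)$ is pseudo-null with $\pd_\RR<\infty$, so Proposition \ref{prop:22} together with Theorem \ref{thm:82}(2) gives $\SF{n}{\RR}(Z_{\{\pe\}}(L_\infty/K))=\RR$ for every $n$. Theorem \ref{thm:82}(3) then yields $\SF{n}{\RR}(\Z_p)=\SF{n-1}{\RR}(Z_{\{\pe\}}^0(L_\infty/K))$, which for $n=2$ is the desired identity once we convert $\SF{\cdot}{\RR}$ to $\vSF{\cdot}{\RR}$ via Lemma \ref{lem:83}. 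The required projective dimension bounds are $\pd_{\RR'}(\Z_p)\leq 2$ (from the Koszul complex over $\RR'\cong\Z_p[[T_1,T_2]]$) and $\pd_{\RR'}(Z_{\{\pe\}}^0(L_\infty/K))\leq 2$ (since $Z_{\{\pe\}}^0$ is $p$-torsion-free, so has depth$_{\RR'}\geq 1$, and $\RR'$ is regular of dimension $3$).

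For part (2), the structural input is the short exact sequence
\[
0 \to Z_{\Sigma_0\setminus S}^0(L_\infty/K) \to Z_{\Sigma\setminus S}^0(L_\infty/K) \to Z_{\{\ol{\pe}\}}(L_\infty/K) \to 0,
\]
coming from the decomposition $Z_{\Sigma\setminus S}=Z_{\Sigma_0\setminus S}\oplus Z_{\{\ol{\pe}\}}$ by projecting onto the second factor, restricted to the augmentation kernel; the assumption $S\subsetneq\Sigma_0$ ensures that the augmentation on $Z_{\Sigma_0\setminus S}$ is surjective, making this projection surjective onto $Z_{\{\ol{\pe}\}}$. Since $\pd_\RR(Z_{\Sigma\setminus S}^0)$ is not a priori finite, Theorem \ref{thm:82}(3) cannot be applied directly, so I introduce a pullback: choose a surjection $P\twoheadrightarrow Z_{\Sigma\setminus S}^0$ with $P$ a torsion $\RR$-module satisfying $\pd_\RR(P)\leq 1$, denote its kernel by $Y$, and form
\[
B' := P\times_{Z_{\Sigma\setminus S}^0} Z_{\Sigma_0\setminus S}^0,
\]
which fits into two short exact sequences
\[
0\to Y\to B'\to Z_{\Sigma_0\setminus S}^0\to 0, \qquad 0\to B'\to P\to Z_{\{\ol{\pe}\}}\to 0.
\]
From the second sequence and the finite $\pd_\RR$'s of $P$ and $Z_{\{\ol{\pe}\}}$, one concludes $\pd_\RR(B')<\infty$. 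Applying Theorem \ref{thm:82}(3) to the second sequence, together with $\SF{n}{\RR}(Z_{\{\ol{\pe}\}})=\RR$ (by Remark \ref{rem:85} and Proposition \ref{prop:22}) and Theorem \ref{thm:82}(2) applied to $P$, yields $\SF{n}{\RR}(B')=\SF{n}{\RR}(P)$ for all $n$. Applying it next to the first sequence gives
\[
\SF{n}{\RR}(Z_{\Sigma_0\setminus S}^0) = \SF{n}{\RR}(B')\,\SF{n-1}{\RR}(Y) = \SF{n}{\RR}(P)\,\SF{n-1}{\RR}(Y) = \SF{n}{\RR}(Z_{\Sigma\setminus S}^0),
\]
the last equality by Theorem \ref{thm:82}(3) applied to $0\to Y\to P\to Z_{\Sigma\setminus S}^0\to 0$. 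As in part (1), both $Z_{\Sigma\setminus S}^0$ and $Z_{\Sigma_0\setminus S}^0$ are $p$-torsion-free, so $\pd_{\RR'}\leq 2$ for each, and Lemma \ref{lem:83} with $n=1$ converts this into the desired $\vSF{1}{\RR}$-identity.

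The main obstacle is setting up part (2): Theorem \ref{thm:82}(3) requires the middle modules in the resolution to have finite projective dimension over $\RR$, which a priori fails for $Z_{\Sigma\setminus S}^0$; the trick is the auxiliary module $B'$, which inherits finite projective dimension from $P$ and $Z_{\{\ol{\pe}\}}$, and letting the contributions of $P$ and $Y$ cancel simultaneously on both sides when Theorem \ref{thm:82}(3) is applied in tandem to the two halves of the pullback.
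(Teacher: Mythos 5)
Your proposal is correct. Part (1) follows the paper's argument exactly: apply Theorem~\ref{thm:82}(3) with $d=1$ to the augmentation sequence, use \eqref{eq:108} and Theorem~\ref{thm:82}(2) to kill the $Z_{\{\pe\}}$ contribution, and convert $\SF{n}{\RR}$ to $\vSF{n}{\RR}$ via Lemma~\ref{lem:83} using the depth/Auslander--Buchsbaum bounds over the three-dimensional regular ring $\RR'$. For part (2), you have in fact identified and filled a gap that the paper's phrase ``Similarly as in (1)'' glosses over: the natural short exact sequence has middle term $Z_{\Sigma\setminus S}^0$, whose projective dimension over $\RR$ need not be finite (the local modules $Z_v$ for $v\nmid p$ can have $p$-torsion in $\GG_v$), so Theorem~\ref{thm:82}(3) cannot be applied to it directly in the way part (1) would suggest. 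Your pullback $B'=P\times_{Z_{\Sigma\setminus S}^0}Z_{\Sigma_0\setminus S}^0$, which inherits finite $\pd_\RR$ from $P$ and $Z_{\{\ol{\pe}\}}$, is the right device to make the $\SF{n}{\RR}$-comparison go through, and the chain $\SF{n}{\RR}(Z_{\Sigma_0\setminus S}^0)=\SF{n}{\RR}(B')\SF{n-1}{\RR}(Y)=\SF{n}{\RR}(P)\SF{n-1}{\RR}(Y)=\SF{n}{\RR}(Z_{\Sigma\setminus S}^0)$ is correct. You also correctly identify the cokernel of $Z_{\Sigma_0\setminus S}^0\hookrightarrow Z_{\Sigma\setminus S}^0$ as $Z_{\{\ol{\pe}\}}$; the paper's displayed sequence writes $Z_{\{\pe\}}$, which is a misprint (harmless, since both $\pe$ and $\ol{\pe}$ satisfy \eqref{eq:108}). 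So this is not a different route but a careful and essentially necessary elaboration of the paper's compressed argument.
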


\begin{proof}
(1)
We have an exact sequence
\[
0 \to Z_{\{\pe\}}^0(L_{\infty}/K) \to Z_{\{\pe\}}(L_{\infty}/K) \to \Z_p \to 0
\]
of $\RR$-modules.
Therefore, \eqref{eq:108} and the properties of $\SF{n}{\RR}$ show
\[
\SF{2}{\RR}(\Z_p) = \SF{1}{\RR}(Z_{\{\pe\}}^0(L_{\infty}/K)).
\]
By Lemma \ref{lem:83}, this equality says
\[
\Fitt^{[2]}_{\RR}(\Z_p) = \Fitt^{[1]}_{\RR}(Z_{\{\pe\}}^0(L_{\infty}/K)).
\]

(2)
We have an exact sequence
\[
0 \to Z_{\Sigma_0 \setminus S}^0(L_{\infty}/K) 
\to Z_{\Sigma \setminus S}^0(L_{\infty}/K)
\to Z_{\{\pe\}}(L_{\infty}/K)
\to 0.
\]
Similarly as in (1), the assertion follows from \eqref{eq:108} and the properties of $\SF{n}{\RR}$, $\Fitt^{[n]}_{\RR}$.
\end{proof}

\begin{rem}\label{rem:110}
Lemma \ref{lem:79}(2) does not play a practical role in this paper, but it clarifies the analogy with the one-variable setting in Remark \ref{rem:109}.
In the one-variable setting, the Fitting ideal of $X_{S_p(k)}(k'_{\infty})$ is described in \cite{GKK_09} by $\Fitt^{[1]}$ of $Z^0$ at {\it non $p$-adic primes}.
In our two-variable setting, Theorem \ref{thm:34} and Lemma \ref{lem:79}(2) show that the analogy holds for $X_{\{\pe\}}(L_{\infty})$.
\end{rem}

\begin{rem}\label{rem:111}
In \cite[Section 4.3]{Kata_05}, we illustrated how to compute $\Fitt^{[2]}_{\RR}(\Z_p)$ explicitly.
The computation is possible in principle, though that gets more complicated when the $p$-rank of $\Gal(L_{\infty}/K_{\infty})$ gets larger.

On the other hand, it seems quite hard to compute $\Fitt^{[1]}_{\RR}(Z_{\Sigma_0 \setminus S}^0(L_{\infty}/K))$ when $S \subsetneqq \Sigma_0$, because the decomposition fields of $v \in \Sigma_0 \setminus S$ can be diverse one-variable extensions of $K$ in $L_{\infty}$.
This is one feature in the two-variable setting; in the one-variable setting in Remark \ref{rem:109}, the decomposition fields are at any rate finite extensions of $k$.
\end{rem}

\subsection{$p$-adic $L$-functions}


As we will recall later (see the proof of Theorem \ref{thm:94}), the results of de Shalit \cite{dS87} involve a base change from $\Z_p$ to $\Zpur$.
Here, $\Zpur$ is the ring of integers of the completion $\Q_p^{\ur}$ of the maximal unramified extension of $\Q_p$.
We will freely use the following lemma that fractional ideals of Iwasawa algebras are characterized by their base changes to $\Zpur$.

\begin{lem}\label{lem:90}
Let $G$ be a profinite group as in Subsection \ref{subsec:91}.
Put $R = \Z_p[[G]]$ and $R^{\ur} = \Zpur[[G]]$.
Note that each fractional ideal $I$ of $R$ yields a fractional ideal $I R^{\ur}$ of $R^{\ur}$.
Then, for fractional ideals  $I$ and $J$ of $R$, we have $I R^{\ur} = J R^{\ur}$ if and only if $I = J$.
\end{lem}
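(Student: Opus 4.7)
The direction $I = J \Rightarrow IR^{\ur} = JR^{\ur}$ is trivial, so the real content is the converse. My plan is to establish that $R \to R^{\ur}$ is faithfully flat, then reduce the fractional-ideal assertion to the statement for integral ideals, where faithful flatness gives the result formally.

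First, I would verify that $R \to R^{\ur}$ is faithfully flat. The base case is $\Z_p \to \Zpur$: this is a local homomorphism of discrete valuation rings, and since $\Zpur$ is torsion-free as a $\Z_p$-module it is flat; as a flat local homomorphism of local Noetherian rings, it is automatically faithfully flat. Using the auxiliary subgroup $G' \cong \Z_p^d$ from Subsection \ref{subsec:91}, set $R' = \Z_p[[G']] \cong \Z_p[[T_1, \ldots, T_d]]$ and $R'^{\ur} = \Zpur[[G']]$. The extension $R' \to R'^{\ur}$ is a local homomorphism of regular Noetherian local rings of the same dimension whose closed fiber equals $\overline{\F}_p$, so miracle flatness yields flatness, hence faithful flatness. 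Finally, $R$ is a free $R'$-module of finite rank $[G:G']$ and $R^{\ur} = R \otimes_{R'} R'^{\ur}$, so faithful flatness is preserved under this base change.

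Next, I would reduce to integral ideals. Choose a non-zero-divisor $r \in R$ with $rI, rJ \subseteq R$; such an $r$ exists because $I$ and $J$ are finitely generated $R$-submodules of the total ring of fractions. By flatness, $r$ remains a non-zero-divisor in $R^{\ur}$, so multiplication by $r$ is injective on both $R$ and $R^{\ur}$, and the equalities $IR^{\ur} = JR^{\ur}$ and $I = J$ are equivalent respectively to $(rI)R^{\ur} = (rJ)R^{\ur}$ and $rI = rJ$. Hence I may assume $I, J$ are ordinary ideals of $R$.

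Finally, faithful flatness of $R \to R^{\ur}$ implies $\mathfrak{a} R^{\ur} \cap R = \mathfrak{a}$ for every ideal $\mathfrak{a} \subseteq R$, by the standard argument of tensoring $0 \to \mathfrak{a} \to R \to R/\mathfrak{a} \to 0$ with $R^{\ur}$ and using that $R/\mathfrak{a} \to R^{\ur}/\mathfrak{a}R^{\ur}$ is injective. Therefore
\[
I = IR^{\ur} \cap R = JR^{\ur} \cap R = J.
\]
The main obstacle is the verification of faithful flatness of $R \to R^{\ur}$; once that is in place, the rest of the argument is formal.
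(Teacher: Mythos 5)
Your proof is correct and follows exactly the approach the paper intends: the paper simply says ``This lemma is a special case of the theory of faithfully flat descent,'' and you have filled in the routine details (faithful flatness of $R\to R^{\ur}$ via miracle flatness over $R'$, reduction to integral ideals by clearing denominators, and the contraction property $\mathfrak{a}R^{\ur}\cap R=\mathfrak{a}$).
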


\begin{proof}
This lemma is a special case of the theory of faithfully flat descent.
\end{proof}

%

Now we introduce $p$-adic $L$-functions.
See \cite[Theorem II.4.14]{dS87} for more information.

At first we consider the following specific situation associated to a nonzero integral ideal $\ff$ of $K$ which is prime to $p$.
Put $L_{\ff} = K(\ff p)$, the ray class field of $K$ modulo $\ff p$, so $L_{\ff, \infty} = K(\ff p^{\infty})$.
We take $\Sigma_{\ff}$ as the set of prime divisors of $\ff p$.
Put $\GG_{\ff} = \Gal(L_{\ff, \infty}/K)$, $\RR_{\ff} = \Z_p[[\GG_{\ff}]]$, and $\RRur_{\ff} = \Zpur[[\GG_{\ff}]]$.

\begin{thm}[{\cite[Theorem II.4.14, Corollary II.6.7]{dS87}}]\label{thm:92}
There exists an element $\mu_{\ff} \in \RRur_{\ff}$ (which is denoted by $\mu(\ff \overline{\pe}^{\infty})$ in \cite{dS87}) satisfying the following.
Let $\varepsilon$ be any grossencharacter of conductor dividing $\ff p^{\infty}$ and of infinity type $(k, j)$ with $k > 0$ and $j \leq 0$.
Then we have
\begin{equation}\label{eq:93}
\Omega_p^{j-k} \cdot \varepsilon(\mu_{\ff})
= \Omega^{j-k} \cdot \left(\frac{\sqrt{d_K}}{2 \pi} \right)^j 
\cdot  G(\varepsilon) \cdot  \left(1-\frac{\varepsilon(\pe)}{p} \right)
\widehat{L}_{\Sigma_{\ff} \setminus \{\pe\}} (\varepsilon^{-1}, 0),
\end{equation}
where $\Omega$ and $\Omega_p$ are complex and $p$-adic periods, $-d_K$ is the discriminant of $K$, $G(\varepsilon)$ is ``like Gauss sum'' (in the words of \cite{dS87}), and $\widehat{L}_{\Sigma_{\ff} \setminus \{\pe\}}$ is the completed $L$-function without Euler factors at the places in $\Sigma_{\ff} \setminus \{\pe\}$.
\end{thm}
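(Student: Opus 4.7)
The plan is to construct $\mu_{\ff}$ following the classical strategy of Katz and de Shalit via the Euler system of elliptic units, and to verify the interpolation formula \eqref{eq:93} using the Kronecker--Damerell formula.

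First, I would construct a norm-compatible system of elliptic units in the tower $L_{\ff, \infty}/L_{\ff}$. These units are built from elliptic functions on the CM elliptic curve with endomorphism ring $\OO_K$, evaluated at torsion points of order dividing $\ff \pe^n \ol{\pe}^m$ and suitably normalized following the Robert--Ramachandra construction (de Shalit, Chapter II, \S 2). Norm-compatibility as $n, m$ vary comes from the distribution relations satisfied by these elliptic functions.

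Second, I would convert this norm-compatible system into a $\Zpur$-valued measure on $\GG_{\ff}$. At the unramified split prime $\pe$, the formal group of the CM elliptic curve is a (relative) Lubin--Tate formal group, and Coleman's theory of interpolation power series provides a natural transform from norm-compatible units in the $\pe$-tower to measures on the decomposition subgroup at $\pe$. Combining this with the natural action of $\GG_{\ff}$ yields the element $\mu_{\ff} \in \RRur_{\ff}$. The base change from $\Z_p$ to $\Zpur$ arises because one has to fix a generator of the Tate module of the formal group at $\pe$, equivalently a $p$-adic period $\Omega_p$; this is precisely the ``trivialization'' that is not definable over $\Z_p$.

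Third, I would verify the interpolation. For a grossencharacter $\varepsilon$ of infinity type $(k, j)$ with $k > 0$ and $j \leq 0$, the value $\varepsilon(\mu_{\ff})$ is computed by applying the $p$-adic analogue of the Maass--Shimura differential operator to the Coleman power series and evaluating at the appropriate torsion point. By Damerell's formula and the Kronecker limit formula, this expression evaluates to a critical value of the Hecke $L$-function attached to $\varepsilon^{-1}$, with periods $\Omega, \Omega_p$ appearing in the ratio dictated by the infinity type, the Gauss-sum-type factor $G(\varepsilon)$, and the Euler factor $1 - \varepsilon(\pe)/p$ reflecting the $\pe$-stabilization inherent in using the formal group at $\pe$. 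The absence of Euler factors at primes in $\Sigma_{\ff} \setminus \{\pe\}$ in $\widehat{L}_{\Sigma_{\ff} \setminus \{\pe\}}$ reflects that the elliptic units were chosen to have conductor dividing $\ff p^{\infty}$, which removes those factors automatically. The main obstacle would be the careful bookkeeping of normalizations: periods, Gauss-sum factors, and Euler factors are notoriously sensitive to conventions, and the classical references (Katz, Yager, de Shalit) employ subtly different normalizations. A rigorous derivation of \eqref{eq:93} in the precise form stated requires tracking every such choice consistently through the construction, and it is exactly this bookkeeping that occupies most of Chapter II of \cite{dS87}.
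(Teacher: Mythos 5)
The paper does not prove this statement; it is quoted verbatim as a known result of de Shalit, cited as \cite[Theorem II.4.14, Corollary II.6.7]{dS87}, so there is no in-paper proof to compare against. Your sketch is a faithful high-level summary of de Shalit's own construction (Robert--Ramachandra elliptic units, Coleman transform at $\pe$ yielding a $\Zpur$-valued measure, interpolation verified via Damerell's formula and the Kronecker limit formula, with the passage to $\Zpur$ accounting for the choice of $p$-adic period), and you correctly flag that the real content in de Shalit is the normalization bookkeeping; so your proposal matches the source the paper defers to.
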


The crucial property of $\mu_{\ff}$ is that it comes from elliptic units; see Theorem \ref{thm:94}.

Now, returning to the general situation, we introduce $p$-adic $L$-functions by similar formulas to $\mu_{\ff}$.
Let $\RRur = \Zpur[[\GG]]$ be the completed group ring of $\GG$ over $\Zpur$.
Note that, as a convention of this paper, $\iL$ denotes invertible ideals of Iwasawa algebras, while $\LL$ denotes elements.

\begin{defn}\label{defn:96}
(1) We define an element $\LL_{\Sigma, S}(L_{\infty}/K) \in \Frac(\RRur)$ by the following interpolation properties.
With the same notation as in Theorem \ref{thm:92}, for a grossencharacter $\varepsilon$ which factors $\GG = \Gal(L_{\infty}/K)$, we have
\begin{align}\label{eq:97}
&\Omega_p^{j-k} \cdot \varepsilon(\LL_{\Sigma, S}(L_{\infty}/K))\\
& \qquad = \Omega^{j-k} \cdot \left(\frac{\sqrt{d_K}}{2 \pi} \right)^j 
\cdot  G(\varepsilon) \cdot  \left(1-\frac{\varepsilon(\pe)}{p} \right)
\widehat{L}_{\Sigma \setminus \{\pe\}} (\varepsilon^{-1}, 0)
\times \left(\prod_{v \in S \setminus \{\pe\}} \frac{1 - \varepsilon(v) N(v)^{-1}}{1 - \varepsilon(v)} \right), 
\end{align}
where in the last product $v$ runs over places in $S \setminus \{\pe\}$ which are prime to the conductor of $\varepsilon$.

(2)
We define an invertible ideal $\iL_{\Sigma, S}(L_{\infty}/K)$ of $\RR$ by requiring
\[
\iL_{\Sigma, S}(L_{\infty}/K) \RRur = \LL_{\Sigma, S}(L_{\infty}/K) \RRur
\]
as invertible ideals of $\RRur$ (see Remark \ref{rem:113}(2) below for the existence; note also that, assuming the existence, we have the uniqueness by Lemma \ref{lem:90}).
We put $\iL_{\Sigma_0}(L_{\infty}/K) = \iL_{\Sigma, \Sigma_0}(L_{\infty}/K)$.
\end{defn}

\begin{rem}\label{rem:113}
We give remarks on the existences of $\LL$ and $\iL$ in Definition \ref{defn:96}.

(1) The existence of $\LL_{\Sigma_{\ff}, \{\pe\}}(L_{\ff, \infty}/K)$ follows immediately from Theorem \ref{thm:92}, since it is equal to $\mu_{\ff}$.
By taking the image under the natural map, for general $L/K$, the existence of $\LL_{\Sigma_{\ff}, \{\pe\}}(L_{\infty}/K)$ follows, where $\ff$ is the prime-to-$p$ component of the conductor of $L/K$.
Then the existence of $\LL_{\Sigma, S}(L_{\infty}/K)$ for general $\Sigma, S$ follows from the computation in Lemma \ref{lem:47} below (see \eqref{eq:100} and \eqref{eq:101}).

(2) The existence of $\iL_{\Sigma, S}(L_{\infty}/K)$ is not obvious at all.
We will prove the existence by the relation with the elliptic units, rather than by directly investigating the interpolation properties.
More concretely, the existence of $\iL_{\Sigma_{\ff}, \{\pe\}}(L_{\ff, \infty}/K)$ follows from Corollary \ref{cor:49}, and the general case follows by using the formulas corresponding to \eqref{eq:100} and \eqref{eq:101}.
\end{rem}

Finally we give the definition of one-variable ideals of $p$-adic $L$-functions.

\begin{defn}\label{defn:95}
Let $\ol{L_{\infty}}/L$ be any $\Z_p$-extension which is contained in the $\Z_p^2$-extension $L_{\infty}/L$.
Put $\ol{\RR} = \Z_p[[\Gal(\ol{L_{\infty}}/K)]]$.
We define the invertible ideal $\iL_{\Sigma, S}(\ol{L_{\infty}}/K)$ of $\ol{\RR}$ as the natural image of $\iL_{\Sigma, S}(L_{\infty}/K)$.
\end{defn}

For the well-definedness, see Corollary \ref{cor:73}.
By replacing $L$ by $L(E[\pe])$ in Definition \ref{defn:95}, the invertible ideal $\iL_{\Sigma, S}(L^{\pe}/K)$ of $\RR^{\pe}$, which appear in Theorem \ref{thm:75}, is defined.

\section{Determinant modules and Fitting ideals}\label{sec:02}

In this section, we give algebraic preliminaries required for the proof of Theorem \ref{thm:34}.

Let $R = \Z_p[[G]]$ be as in Subsection \ref{subsec:91}.
Let $\Inv_{R}$ denote the commutative group of invertible ideals of $R$.
Our main purpose in this section is to establish the following (the notation will be explained later).

\begin{thm}\label{thm:104}
We have a diagram
\begin{equation}\label{eq:06}
\xymatrix{
K_0(\PP_{R}) \ar[r] \ar[rd]_{\Fitt_{R}} &
K_0(\QQ_{R}) \ar[r]^-{\varphi} &
K_0(\DePTor(R)) \ar[ld]^{\Det_{R}} \\
& \Inv_{R} &\\
}
\end{equation}
which is anti-commutative, meaning that 
\[
\Det_{R} \circ \varphi = - \Fitt_{R}
\]
(in the additive notation).
Moreover, all maps in the diagram \eqref{eq:06} are isomorphic.
\end{thm}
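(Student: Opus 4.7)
The plan is to identify all three $K_0$-groups in the diagram with the abelian group $\Inv_R$ of invertible fractional ideals, and then check compatibility of the Fitting-ideal and determinant maps up to sign. I expect $\PP_R$ to denote the exact category of finitely generated torsion $R$-modules of projective dimension at most one, and $\QQ_R$ the analogous category with merely finite projective dimension; the horizontal arrows are then induced by the inclusion $\PP_R \hookrightarrow \QQ_R$ and by placing a module in degree zero inside $\DePTor(R)$.

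First I would show the two horizontal maps are isomorphisms by standard $K$-theoretic machinery. Quillen's resolution theorem gives $K_0(\PP_R) \xrightarrow{\sim} K_0(\QQ_R)$: any $M \in \QQ_R$ admits a finite $\PP_R$-resolution obtained by truncating a projective resolution at a sufficiently deep syzygy (which inherits torsionness from $M$, since the kernel of a surjection between torsion modules is torsion). A Gillet--Waldhausen-type argument gives $K_0(\QQ_R) \xrightarrow{\sim} K_0(\DePTor(R))$: every perfect complex with torsion cohomology is quasi-isomorphic to a bounded complex of objects in $\QQ_R$, and the alternating sum of component classes is a well-defined $K_0$-invariant on quasi-isomorphism classes.

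Next I would construct the two diagonal maps and verify the anti-commutativity on generators. A torsion $P \in \PP_R$ admits a square presentation $0 \to R^n \xrightarrow{A} R^n \to P \to 0$ (equal ranks because $P$ is torsion), and $\Fitt_R(P) = (\det A)$ is an invertible ideal since $\det A$ is a non-zero-divisor; multiplicativity on short exact sequences follows from the horseshoe lemma together with block-triangular determinants, so $\Fitt_R$ descends to $K_0(\PP_R)$. On $\DePTor(R)$, the Knudsen--Mumford determinant yields a graded invertible $R$-module $\Det_R(C)$; the acyclicity of $C \otimes_R \Frac(R)$ provides a canonical trivialisation $\Det_R(C) \otimes_R \Frac(R) \cong \Frac(R)$ that identifies $\Det_R(C)$ with a principal fractional ideal of $R$. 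Applying this to $[R^n \xrightarrow{A} R^n]$ placed in degrees $-1$ and $0$ produces $\Det_R = (\det A)^{-1}$, whereas $\Fitt_R(P) = (\det A)$; this is precisely the anti-commutativity $\Det_R \circ \varphi = -\Fitt_R$.

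Finally, for the isomorphism assertion I would invoke Quillen's localisation sequence $K_1(R) \to K_1(\Frac(R)) \to K_0(\DePTor(R)) \to 0$. Since $R = \Z_p[[G]]$ decomposes as a finite product of complete semilocal Noetherian rings, both $K_1$-terms are computed by the determinant and map onto $R^\times$ and $\Frac(R)^\times$ respectively, so the sequence identifies $K_0(\DePTor(R)) \cong \Frac(R)^\times / R^\times = \Inv_R$, realised concretely by $\Det_R$. Bijectivity of $\Fitt_R$, and of the horizontal arrows, then follows from the anti-commutativity and the previous step. The main obstacle I anticipate is the clean identification $K_0(\DePTor(R)) \cong \Inv_R$ through the Knudsen--Mumford formalism: one must check that the canonical trivialisation over $\Frac(R)$ indeed lands in $\Inv_R$ and that the assignment is a group homomorphism, which is standard in spirit but whose graded-line bookkeeping is delicate; a secondary subtlety is the multiplicativity of $\Fitt_R$ on $\PP_R$-short exact sequences, which is not automatic for Fitting ideals in general but holds under the projective-dimension-one hypothesis via compatible square presentations.
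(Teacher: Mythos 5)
Your overall strategy is sound and the anti-commutativity check on a square presentation matches the paper's. However, the route you take to the isomorphism assertion is genuinely different from the paper's, and one of your explanatory side remarks is incorrect.

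The paper establishes that all maps are isomorphisms by proving surjectivity in two elementary steps, then bootstrapping through the anti-commutative triangle. First, it shows $\Fitt_R\colon K_0(\PP_R)\to\Inv_R$ is surjective by exhibiting an explicit preimage of an invertible ideal $I$: choose a non-zero-divisor $f$ with $fI\subset R$ and send $[R/fI]-[R/fR]\mapsto I$ (well-definedness and injectivity of $\Fitt_R$ being quoted from the author's earlier paper). Second, it constructs $\varphi$ via projective resolutions and proves surjectivity of $\varphi$ by an induction on the cohomological ``length'' of a perfect complex, peeling off a degree at a time using a triangle $F_1^\bullet\to F^\bullet\to F_2^\bullet\to$. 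Combined with the resolution theorem and the anti-commutativity, all maps are forced to be isomorphisms. You instead invoke Quillen's localization sequence $K_1(R)\to K_1(\Frac(R))\to K_0(\DePTor(R))\to 0$ to identify $K_0(\DePTor(R))\cong\Frac(R)^\times/R^\times=\Inv_R$ directly, and a Gillet--Waldhausen-type theorem to get $\varphi$ an isomorphism. This is a valid and more conceptual argument, but it imports heavier $K$-theoretic machinery: one has to verify that $\DePTor(R)$ really is the fiber term in the localization sequence for $R\to\Frac(R)$, that the boundary map is computed by $\Det_R$, and that every object of $\DePTor(R)$ is quasi-isomorphic to a bounded complex of $\QQ_R$-objects; none of this is entirely routine. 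The paper's route is more pedestrian but fully self-contained modulo citations to the author's earlier work.

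One concrete error in your write-up: you say a finite $\PP_R$-resolution of $M\in\QQ_R$ is ``obtained by truncating a projective resolution at a sufficiently deep syzygy.'' The terms and syzygies of a projective resolution are projective, hence never torsion (unless zero), so they do not lie in $\PP_R$. The correct mechanism for verifying the hypotheses of the resolution theorem is different: pick a non-zero-divisor $f$ annihilating $M$, take a surjection $R^n\twoheadrightarrow M$, pass to $(R/f)^n\twoheadrightarrow M$ (an object of $\PP_R$), note the kernel is again a torsion module of finite projective dimension, and iterate; termination follows from the finiteness of $\pd_R(M)$. The conclusion you want is still correct, but the justification needs to be replaced.
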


This result might be more or less known to experts, but we give a proof for completeness.

\subsection{Fitting ideals $\Fitt_{R}$}

Following the notations in \cite{Kata_05}, we let $\PP_{R}$ (resp. $\QQ_{R}$) be the exact category consisting of finitely generated torsion $R$-modules $P$ (resp. $Q$) such that $\pd_{R}(P) \leq 1$ (resp. $\pd_{R}(Q) < \infty$).

For an (essentially small) exact category $\CC$ like $\PP_{R}$ and $\QQ_{R}$, we denote by $K_0(\CC)$ its Grothendieck group.
By definition, $K_0(\CC)$ has the following presentation by generators and relations:
the generators are $[X]$ for objects $X \in \CC$ and the relations are $[X] = [X'] + [X'']$ for exact sequences $0 \to X' \to X \to X'' \to 0$ in $\CC$.

\begin{thm}[{Resolution theorem}]\label{thm:102}
The natural group homomorphism $K_0(\PP_{R}) \to K_0(\QQ_{R})$ induced by the inclusion functor from $\PP_{R}$ to $\QQ_{R}$ is an isomorphism.
\end{thm}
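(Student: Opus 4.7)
My plan is to apply the classical $K_0$-version of Quillen's resolution theorem, which reduces the claim to verifying two things: (i) $\PP_{R}$ is closed in $\QQ_{R}$ under kernels of admissible epimorphisms; and (ii) every object of $\QQ_{R}$ admits a finite resolution by objects of $\PP_{R}$. The exact structures on both categories are of course the ones inherited from the abelian category of finitely generated $R$-modules.

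First I would verify (i). Given a short exact sequence $0 \to A' \to A \to A'' \to 0$ in $\QQ_{R}$ with $A, A'' \in \PP_{R}$, the module $A'$ is $R'$-torsion as a submodule of the $R'$-torsion module $A$, and the standard long exact $\mathrm{Ext}$-sequence gives
\[
\pd_{R}(A') \leq \max(\pd_{R}(A), \pd_{R}(A'') - 1) \leq 1,
\]
so $A' \in \PP_{R}$.

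Next I would establish (ii). Let $Q \in \QQ_{R}$. Since $Q$ is finitely generated and $R'$-torsion, its annihilator in $R'$ contains a non-zero-divisor $g$, and because $R$ is free of finite rank over $R'$ (using that $G'$ has finite index in $G$), the element $g$ remains a non-zero-divisor in $R$; hence $R/gR$ lies in $\PP_{R}$. Choosing generators $q_1, \dots, q_m$ of $Q$ with $g q_i = 0$, I obtain a surjection $(R/gR)^m \twoheadrightarrow Q$ whose kernel $K$ is $R'$-torsion (as a submodule of $(R/gR)^m$) and satisfies $\pd_{R}(K) \leq \max(\pd_{R}(Q) - 1, 1)$. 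When $\pd_{R}(Q) \leq 2$ this already places $K$ in $\PP_{R}$; otherwise $\pd_{R}(K) < \pd_{R}(Q)$, and induction on $\pd_{R}(Q)$ produces the desired finite $\PP_{R}$-resolution.

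The main obstacle, were one to reprove the resolution theorem rather than cite it, is the well-definedness of the candidate inverse $\psi \colon K_0(\QQ_{R}) \to K_0(\PP_{R})$ defined by $\psi([Q]) = \sum_{i} (-1)^i [P_i]$ for any finite $\PP_{R}$-resolution $0 \to P_n \to \cdots \to P_0 \to Q \to 0$. Independence from the chosen resolution is a Schanuel-type statement, which I would prove by comparing any two such resolutions through a common refinement constructed via the horseshoe lemma; additivity of $\psi$ on short exact sequences in $\QQ_{R}$ follows from the same lemma. Once $\psi$ is in hand, both composites with the natural inclusion-induced map are visibly the identity on generators, yielding the desired isomorphism.
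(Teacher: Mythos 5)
Your proposal is correct and takes the same route as the paper, which simply cites the classical $K_0$-resolution theorem (Rosenberg, Theorem 3.1.13) without comment; you have usefully filled in the two hypothesis checks that the paper leaves implicit. Both verifications are sound: for (i) the $\mathrm{Ext}$ long exact sequence bound $\pd_R(A') \leq \max(\pd_R(A), \pd_R(A'')-1) \leq 1$ is exactly right, and for (ii) the observation that $R$ is finite free over $R'$ so a non-zero-divisor $g \in R'$ stays a non-zero-divisor in $R$, combined with descending induction on $\pd_R(Q)$, gives the finite $\PP_R$-resolution.
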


\begin{proof}
See \cite[Theorem 3.1.13]{Ros94}, for example.
\end{proof}


\begin{prop}\label{prop:04}
$\Fitt_{R}$ induces a group isomorphism $\Fitt_{R}: K_0(\PP_{R}) \to \Inv_{R}$.
\end{prop}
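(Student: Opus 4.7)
The plan is to exhibit an inverse map $\psi \colon \Inv_R \to K_0(\PP_R)$ to $\Fitt_R$. First I would verify that $\Fitt_R$ descends to a group homomorphism landing in $\Inv_R$. For $P \in \PP_R$, take a length-one projective resolution $0 \to F_1 \xrightarrow{\phi} F_0 \to P \to 0$; since $R$ is a product of local rings, $F_0$ and $F_1$ are free on each component, and their ranks must agree componentwise because $P$ is $R$-torsion. Hence $F_0 \cong F_1 \cong R^n$ and $\Fitt_R(P) = (\det \phi)$, with $\det \phi$ a non-zero-divisor, so $\Fitt_R(P) \in \Inv_R$; independence from the resolution is a standard Schanuel-type check. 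Multiplicativity on a short exact sequence $0 \to P' \to P \to P'' \to 0$ in $\PP_R$ follows from the horseshoe lemma, which gives a presentation of $P$ by a block upper-triangular matrix whose diagonal blocks present $P'$ and $P''$, yielding $\Fitt_R(P) = \Fitt_R(P') \Fitt_R(P'')$.

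To construct $\psi$, note that every $I \in \Inv_R$ is principal in the total ring of fractions $Q(R)$ (because $R$ is semi-local), say $I = (a/b)R$ with $a, b \in R$ non-zero-divisors; put $\psi(I) = [R/(a)] - [R/(b)]$. The elementary short exact sequence
\[
0 \to R/(b) \xrightarrow{\cdot a} R/(ab) \to R/(a) \to 0
\]
yields $[R/(ab)] = [R/(a)] + [R/(b)]$ in $K_0(\PP_R)$, from which both the well-definedness of $\psi$ (independence of the representative $a/b$) and the homomorphism property are immediate. The composition $\Fitt_R \circ \psi$ is then the identity on $\Inv_R$ by inspection.

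The remaining and most substantive step is $\psi \circ \Fitt_R = \mathrm{id}$ on $K_0(\PP_R)$, which reduces to the identity $[P] = [R/(\det \phi)]$ for $P = \Coker(\phi \colon R^n \to R^n)$. I would first reduce to $R$ local by working componentwise, then invoke Bass's theorem that $K_1(R) = R^\times$ for commutative semi-local rings (equivalently $SL_m(R) = E_m(R)$ for $m \ge 2$): after stabilizing $\phi$ to $\phi \oplus I_m$, the matrix becomes equivalent under left and right multiplication by products of elementary matrices to an upper triangular matrix whose diagonal entries $d_1, \dots, d_n, 1, \dots, 1$ satisfy $d_1 \cdots d_n = \det \phi$ up to a unit. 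Such operations preserve both the class of $\Coker$ in $K_0$ and the determinant up to a unit, and the coordinate filtration of the cokernel of an upper triangular matrix together with iterated use of the displayed short exact sequence collapses $\sum_i [R/(d_i)]$ to $[R/(\det \phi)]$. The hard part will be justifying the reduction to triangular form over a semi-local Noetherian ring that need not be Bezout, where elementary stability is non-trivial; as an alternative route, one may bypass this by invoking the localization exact sequence $K_1(R) \to K_1(Q(R)) \to K_0(\PP_R) \to K_0(R)$ in algebraic $K$-theory, which directly identifies $K_0(\PP_R)$ with $Q(R)^\times / R^\times \cong \Inv_R$ through the same connecting map $\psi$.
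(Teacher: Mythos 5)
Your surjectivity argument and your verification that $\Fitt_R$ is a well-defined homomorphism into $\Inv_R$ (via free square presentations and the horseshoe lemma) line up well with the paper. The divergence is in the injectivity step, which the paper dispatches by citing \cite[Proposition 2.7, Remark 2.8]{Kata_05}, and which you try to prove directly; there the proposal has a genuine gap.

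The core claim you need is $[P] = [R/(\det\phi)]$ in $K_0(\PP_R)$ for $P = \Coker(\phi\colon R^n \to R^n)$. Your first route, reducing $\phi$ to upper-triangular form by $\GL_n(R)$-operations, fails already for $R = \Z_p[[T]]$: the Smith/triangular normal form requires a Bezout-type condition, and the fact that $K_1(R) = R^\times$ (equivalently $\SL_n(R) = E_n(R)$) only controls matrices of determinant $1$, not the equivalence class of an arbitrary injective $\phi$ under two-sided $\GL_n(R)$-action. Knowing $K_1$ does not give you a triangularization. You flag this yourself, which is honest, but the flag does not close the gap. Your second route, the localization sequence $K_1(R) \to K_1(Q(R)) \to K_0(\PP_R) \to K_0(R)$, is also not available in the form you write it. The Thomason--Trobaugh localization sequence has as its middle term the Grothendieck group of \emph{perfect complexes with torsion cohomology}, i.e.\ $K_0(\DePTor(R))$, not $K_0(\PP_R)$; Quillen's $G$-theory localization sequence has $G_0$ of \emph{all} finitely generated torsion modules in the middle, which is again not $K_0(\PP_R)$ when $R$ is not regular (and $R = \Z_p[[G]]$ is not regular whenever $G$ has $p$-torsion). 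Identifying $K_0(\PP_R)$ with $K_0(\DePTor(R))$ is essentially the content of Theorem~\ref{thm:104}, so invoking the localization sequence in this form quietly presupposes a substantial part of what you are trying to prove. To repair the argument you would either need to cite the injectivity result from \cite{Kata_05} as the paper does, or independently prove the comparison $K_0(\QQ_R) \simeq K_0(\DePTor(R))$ (e.g.\ by a suitable version of the resolution/d\'evissage theorems for Waldhausen categories) before feeding the localization sequence into the picture.
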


\begin{proof}
See \cite[Proposition 2.7]{Kata_05} for the well-definedness and \cite[Remark 2.8]{Kata_05} for the injectivity.
To show the surjectivity, take any $I \in \Inv_{R}$.
We can take a non-zero-divisor $f \in R$ such that $fI \subset R$.
Since any invertible ideals are projective, we have $R/fI \in \PP_{R}$.
Then $\Fitt_{R}$ sends the element $[R/fI] - [R/fR] \in K_0(\PP_{R})$ to $I$.
\end{proof}

\hidden{
Let $R$ be a noetherian commutative ring.
Let $I \subset \Frac(R)$ be an invertible ideal, meaning that there is an ideal $J$ such that $IJ = R$.
Then $I$ is projective as an $R$-module.

To prove this, take $x_1, \dots, x_n \in I$ and $y_1, \dots, y_n \in J$ such that $\sum_i x_iy_i = 1$.
We define $I \to R^n$ by $x \mapsto (xy_1, \dots, xy_n)$ and $R^n \to I$ by $(a_1, \dots, a_n) \mapsto \sum_i a_ix_i$.
Then the composition is the identity on $I$, showing that $I$ is a direct summand of $R^n$.
}

\subsection{Determinant $\Det_{R}$}



Recall that a triangulated category $\CC$ is an additive category equipped with a translation functor $\CC \to \CC$, denoted by $X \mapsto X[1]$, and a notion of (distinguished) triangles
\[
X' \to X \to X'' \to X'[1] \qquad \text{(or, $X' \to X \to X'' \to$ for short)}
\]
satisfying a couple of axioms.

\hidden{
Here we review the octahedral axiom:
Let $X' \to X \to X'' \to X'[1]$ be a triangle and $Y \to X'$ a morphism.
Then an axiom claims that there are objects $Z', Z$ with triangles in the vertical lines of the commutative diagram
\[
\xymatrix{
Y \ar@{=}[r] \ar[d] &
Y \ar[d] & & \\
X' \ar[r] \ar[d] &
X \ar[r] \ar[d] &
X'' \ar[r] \ar@{=}[d] &
X'[1] \ar[d]\\
Z' \ar@{.>}[r] \ar[d] &
Z \ar@{.>}[r] \ar[d] &
X'' \ar[r] &
Z'[1]\\
Y[1] \ar@{=}[r] &
Y[1] &&&
}
\]
Now the octahedral axiom shows that there are dotted arrows which make the lower horizontal line a triangle.
}

For an (essentially small) triangulated category $\CC${\if0 like $\DePTor(R)$\fi}, we denote by $K_0(\CC)$ its Grothendieck group.
By definition, $K_0(\CC)$ has the following presentation by generators and relations:
the generators are $[X]$ for objects $X \in \CC$ and the relations are $[X] = [X'] + [X'']$ for triangles $X' \to X \to X'' \to $ in $\CC$.

We briefly introduce several categories arising from cochain complexes.

\begin{defn}
Let $\ChB(R)$ be the abelian category of bounded cochain complexes of $R$-modules.
A complex $F^{\bullet} \in \ChB(R)$ is said to be perfect if each $F^i$ is finitely generated and projective.
Let $\ChP(R)$ be the category of perfect complexes of $R$-modules.
Moreover, let $\ChPTor(R)$ be its subcategory consisting of those with torsion cohomology groups $H^i(F^{\bullet})$.

These categories $\ChB(R), \ChP(R)$, and $\ChPTor(R)$ are equipped with natural translation functors.
Let $\DeB(R)$, $\DeP(R)$ and $\DePTor(R)$ be the derived categories of $\ChB(R)$, $\ChP(R)$ and $\ChPTor(R)$, respectively.
Then these derived categories are triangulated categories.
\end{defn}

Now we introduce determinant modules.
We refer to \cite{KM76} for the detailed construction.
One can also refer to \cite[Subsection 3.1]{GKK_09}.

For a finitely generated projective $R$-module $F$, letting $\rank(F)$ denote the (locally constant) rank of $F$, we define the determinant of $F$ by
\[
\Det_{R}(F) = \bigwedge_{R}^{\rank(F)} F.
\]
Moreover, we denote its inverse by
\[
\Det_{R}^{-1}(F) = \Hom_{R}(\Det_{R}(F), R).
\]
More precisely, we should introduce the grade, but we omit it for simplicity.

For each complex $F^{\bullet} \in \ChP(R)$, we define its determinant by
\[
\Det_{R}(F^{\bullet}) 
= \bigotimes_{i \in \Z} \Det_{R}^{(-1)^i}(F^i).
\]
Similarly, we denote by $\Det_{R}^{-1}(F^{\bullet})$ its inverse.

Suppose $F^{\bullet} \in \ChPTor(R)$.
Since $\Frac(R) \otimes_{R} F^{\bullet}$ is acyclic, we have a natural isomorphism $\Det_{\Frac(R)}(\Frac(R) \otimes_{R} F^{\bullet}) \simeq \Frac(R)$.
Therefore, we have a natural injective map
\[
\Det_{R}(F^{\bullet}) \hookrightarrow \Det_{\Frac(R)} (\Frac(R) \otimes_{R} F^{\bullet}) \simeq \Frac(R).
\]
From now on, we identify $\Det_{R}(F^{\bullet})$ with its image in $\Frac(R)$.
Therefore, $\Det_{R}(F^{\bullet})$ is an invertible ideal of $R$.

\begin{lem}\label{lem:05}
$\Det_{R}$ induces a group homomorphism $\Det_{R}: K_0(\DePTor(R)) \to \Inv_{R}$.
\end{lem}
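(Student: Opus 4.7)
The statement has two parts: \emph{well-definedness} on the derived category, meaning that quasi-isomorphic complexes yield the same invertible ideal of $R$, and \emph{additivity} on triangles, meaning that for every distinguished triangle $F'^{\bullet} \to F^{\bullet} \to F''^{\bullet} \to$ in $\DePTor(R)$ one has $\Det_{R}(F^{\bullet}) = \Det_{R}(F'^{\bullet}) \cdot \Det_{R}(F''^{\bullet})$ as ideals of $R$.

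The plan is to deduce both properties from the determinant functor constructed by Knudsen--Mumford in \cite{KM76}. First I would handle well-definedness. For any quasi-isomorphism $F^{\bullet} \xrightarrow{\sim} G^{\bullet}$ between perfect complexes, \cite{KM76} produces a canonical isomorphism $\Det_{R}(F^{\bullet}) \simeq \Det_{R}(G^{\bullet})$ of invertible $R$-modules, functorial under the base change $R \to \Frac(R)$. Moreover, when a perfect complex over $\Frac(R)$ is acyclic, they provide a canonical trivialization $\Det_{\Frac(R)}(-) \simeq \Frac(R)$ compatible with quasi-isomorphisms. Composing the Knudsen--Mumford isomorphism with these trivializations for $\Frac(R)\otimes_R F^{\bullet}$ and $\Frac(R)\otimes_R G^{\bullet}$ shows that the two sub-ideals of $\Frac(R)$ coincide, giving the desired well-definedness.

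For additivity, I would first represent a given distinguished triangle by an honest short exact sequence $0 \to F'^{\bullet} \to F^{\bullet} \to F''^{\bullet} \to 0$ in $\ChPTor(R)$; this is standard via a mapping cone construction, after which $F^{\bullet}$ remains perfect with torsion cohomology. For such a short exact sequence of perfect complexes, \cite{KM76} supply a canonical isomorphism $\Det_{R}(F^{\bullet}) \simeq \Det_{R}(F'^{\bullet}) \otimes_{R} \Det_{R}(F''^{\bullet})$ which is again compatible with base change to $\Frac(R)$ and with the acyclic trivializations. Reading this identification inside $\Frac(R)$ translates into the multiplicative relation of invertible sub-ideals, which is additivity in the group $\Inv_{R}$.

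The main obstacle is the bookkeeping of the compatibility of the Knudsen--Mumford isomorphism with the acyclic trivialization used to embed $\Det_{R}(F^{\bullet})$ into $\Frac(R)$; in particular one must keep track of the graded sign conventions so that the translated relation becomes an equality of ideals rather than equality up to a unit. This verification is essentially built into the construction of \cite{KM76}, and the exposition in \cite[Subsection 3.1]{GKK_09} can be invoked to streamline it.
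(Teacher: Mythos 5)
Your proposal is correct and takes the same route the paper intends: the paper's proof of Lemma \ref{lem:05} consists of the single sentence ``This is a standard fact,'' implicitly deferring to the Knudsen--Mumford determinant formalism that was cited just before the lemma, and your argument is precisely the standard proof one would give by unwinding that formalism (well-definedness from functoriality of $\Det$ under quasi-isomorphisms and compatibility with the acyclic trivialization, additivity from the short-exact-sequence/cone reduction). One minor remark: for the last step you do not actually need an on-the-nose equality of embeddings into $\Frac(R)$ rather than equality up to a unit of $R$, since multiplying a fractional ideal by a unit of $R$ does not change it; what genuinely requires the Knudsen--Mumford compatibilities is ruling out a discrepancy by a unit of $\Frac(R)$, which is what you in fact verify.
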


\begin{proof}
This is a standard fact.
\end{proof}

\subsection{Homomorphism $\varphi$}

We construct the homomorphism $\varphi$ in the diagram \eqref{eq:06}.

\begin{prop}\label{prop:03}
For each $Q \in \QQ_{R}$, take a projective resolution of $Q$, that is, a perfect complex $F_{Q}^{\bullet} \in \ChPTor(R)$ with $F_{Q}^i = 0$ for $i \geq 1$ and a quasi-isomorphism $F_{Q}^{\bullet} \to Q[0]$.
Then $[Q] \mapsto [F_{Q}^{\bullet}]$ gives a well-defined surjective homomorphism $\varphi: K_0(\QQ_{R}) \to K_0(\DePTor(R))$.
\end{prop}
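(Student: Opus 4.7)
The plan is to verify three things in order: (i) well-definedness of the assignment $Q \mapsto [F_Q^{\bullet}]$ on isomorphism classes of objects, (ii) additivity on short exact sequences (so it descends to a homomorphism $\varphi$ on $K_0$), and (iii) surjectivity of $\varphi$.

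For (i), existence of a finite projective resolution $F_Q^{\bullet}$ follows directly from the hypothesis $\pd_R(Q) < \infty$ built into $\QQ_R$. Its only cohomology is $Q$ in degree $0$ (which is torsion by hypothesis), so $F_Q^{\bullet} \in \ChPTor(R)$. Independence from the choice of resolution is standard homological algebra: any two finite projective resolutions of $Q$ are chain-homotopy equivalent, hence quasi-isomorphic, and so define the same class in $K_0(\DePTor(R))$.

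For (ii), I would apply the horseshoe lemma: given $0 \to Q' \to Q \to Q'' \to 0$ in $\QQ_R$ together with chosen finite projective resolutions $F_{Q'}^{\bullet}$ and $F_{Q''}^{\bullet}$, one constructs a finite projective resolution $F_Q^{\bullet}$ of $Q$ fitting into a short exact sequence of complexes
\[
0 \to F_{Q'}^{\bullet} \to F_Q^{\bullet} \to F_{Q''}^{\bullet} \to 0.
\]
This short exact sequence of perfect complexes with torsion cohomology yields a distinguished triangle in $\DePTor(R)$, so $[F_Q^{\bullet}] = [F_{Q'}^{\bullet}] + [F_{Q''}^{\bullet}]$ in $K_0$; together with (i), this shows $\varphi$ descends to a well-defined group homomorphism.

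For (iii), I would argue by induction on the number of nonzero cohomology groups of $F^{\bullet} \in \ChPTor(R)$. In the base case, when the cohomology is concentrated in a single degree $i$, the shifted complex $F^{\bullet}[i]$ is itself a finite projective resolution of $H^i(F^{\bullet})$, which therefore lies in $\QQ_R$, and $[F^{\bullet}] = (-1)^i \varphi([H^i(F^{\bullet})])$. For the inductive step, let $a$ denote the lowest degree of nonzero cohomology and use the canonical truncation triangle
\[
\tau^{\leq a} F^{\bullet} \to F^{\bullet} \to \tau^{>a} F^{\bullet} \to,
\]
writing $[F^{\bullet}]$ as the sum of $[\tau^{\leq a} F^{\bullet}] = [H^a(F^{\bullet})[-a]]$ and the strictly smaller-amplitude class $[\tau^{>a} F^{\bullet}]$, to which the inductive hypothesis applies.

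The main obstacle will be verifying, in the inductive step, that both truncations actually lie in $\DePTor(R)$, i.e., are quasi-isomorphic to perfect complexes (torsion cohomology is automatic). This reduces to showing $H^a(F^{\bullet}) \in \QQ_R$; perfectness of $\tau^{>a} F^{\bullet}$ then follows from the two-out-of-three property in the truncation triangle. To establish $\pd_R(H^a(F^{\bullet})) < \infty$, I would analyze syzygies: the vanishing of $H^i(F^{\bullet})$ for $i < a$ yields an exact sequence
\[
0 \to F^s \to F^{s+1} \to \cdots \to F^{a-1} \to \ker(d^a) \to H^a(F^{\bullet}) \to 0,
\]
and a parallel analysis of the upper half of $F^{\bullet}$ controls $\pd_R(\ker(d^a))$, using crucially the structure of $R$ as a module-finite extension of the regular local ring $R'$.
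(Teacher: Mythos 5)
Your treatment of well-definedness (parts (i) and (ii)) matches the paper exactly: homotopy-equivalence of projective resolutions, plus the horseshoe lemma. The gap is in your surjectivity argument (part (iii)).

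The step you flag as "the main obstacle" is not merely an obstacle — it is in fact false that $H^a(F^{\bullet})$ lies in $\QQ_R$, and the "parallel analysis of the upper half" is circular. Your syzygy sequence from below gives finite projective dimension of $\operatorname{im}(d^{a-1})$, and bounding $\pd_R(\ker d^a)$ would in turn require finite projective dimension of $\operatorname{im}(d^a)$, which requires knowledge of $H^{a+1}$, and so on upward: one would need to already know that the higher cohomology groups lie in $\QQ_R$, which is exactly what is in question. A concrete counterexample: take $R = \Z_p[\Z/p] = \Z_p[x]/(x^p - 1)$ (allowed here, with $G' = \{1\}$ and $R' = \Z_p$), and let $F^{\bullet}$ be the Koszul complex on $p$ and $x-1$, sitting in degrees $-2,-1,0$. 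Each term is free, the cohomology is $p$-torsion (the complex becomes acyclic after inverting $p$), and $H^{-2}=0$, so $F^{\bullet} \in \ChPTor(R)$. A short computation shows the lowest nonzero cohomology is $H^{-1}(F^{\bullet}) \simeq \F_p$, which has infinite projective dimension over the non-regular local ring $R$. Hence $\tau^{\leq -1}F^{\bullet} \simeq \F_p[1]$ is not perfect, and the truncation triangle does not live in $\DePTor(R)$. The module-finiteness of $R$ over $R'$ does not help: it guarantees finite $R'$-projective dimension of everything, which does not control $R$-projective dimension.

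The paper's proof avoids this by a different inductive decomposition that never needs cohomology to be in $\QQ_R$. After normalizing so that $H^i(F^{\bullet}) = 0$ for $i > 0$ and $F^i = 0$ for $i > 0$, it picks a non-zero-divisor $f$ annihilating $H^0(F^{\bullet})$ and builds the explicit two-term perfect complex $F_1^{\bullet} = [\,fF^0 \hookrightarrow F^0\,]$ in degrees $-1,0$, with a morphism $F_1^{\bullet} \to F^{\bullet}$ constructed by lifting along $d^{-1}$. By construction $F_1^{\bullet}$ has cohomology $F^0/fF^0$ concentrated in degree $0$, so $[F_1^{\bullet}] \in \Image(\varphi)$ directly; and the cone $F_2^{\bullet}$ is automatically perfect (cone of a map of perfect complexes), has torsion cohomology, and has strictly smaller cohomological span because $H^0(F_1^{\bullet}) \twoheadrightarrow H^0(F^{\bullet})$ kills the top cohomology in the long exact sequence. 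The induction then runs on the span of nonzero cohomology. This subtraction of an explicit perfect two-term piece is the key idea that replaces truncation; I would encourage you to adopt it.
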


\begin{proof}
It is a basic fact in homological algebra that two projective resolutions of the same module are homotopic to each other.
Moreover, for an exact sequence $0 \to Q' \to Q \to Q'' \to 0$ in $\QQ_{R}$, by the horseshoe lemma, we can take projective resolutions so that an exact sequence $0 \to F_{Q'}^{\bullet} \to F_{Q}^{\bullet} \to F_{Q''}^{\bullet} \to 0$ exists.
Therefore, we have a well-defined homomorphism $\varphi: K_0(\QQ_{R}) \to K_0(\DePTor(R))$.

We prove the surjectivity.
For $F^{\bullet} \in \DePTor(R)$, define $\length(F^{\bullet})$ by $\length(F^{\bullet}) = -\infty$ if $F^{\bullet}$ is acyclic, and
\[
\length(F^{\bullet}) = \max \{i \in \Z \mid H^i(F^{\bullet}) \neq 0\} - \min \{i \in \Z \mid H^i(F^{\bullet}) \neq 0\} \geq 0
\]
otherwise.
We shall show that $[F^{\bullet}] \in \Image(\varphi)$ by induction on $\length(F^{\bullet})$.
If $F^{\bullet}$ is acyclic, then $[F^{\bullet}] = 0$ and we have nothing to do.
Therefore, we may assume that $F^{\bullet}$ is not acyclic.
Since $[F^{\bullet}[n]] = (-1)^n[F^{\bullet}]$ for $n \in \Z$,
we may assume that $H^0(F^{\bullet}) \neq 0$ and $H^i(F^{\bullet}) = 0$ for $i \geq 1$.
Moreover, by truncation, we may assume that $F^i = 0$ for $i \geq 1$.

Suppose $\length(F^{\bullet}) = 0$.
Then the complexes $F^{\bullet}$ and $H^0(F^{\bullet})[0]$ are quasi-isomorphic.
By the definition of $\varphi$, this shows that $[F^{\bullet}] = \varphi([H^0(F^{\bullet})]) \in \Image(\varphi)$.

Suppose $\length(F^{\bullet}) \geq 1$.
Since the cohomology $H^0(F^{\bullet})$ is torsion, we can take a non-zero-divisor $f \in R$ which annihilates it.
By the projectivity of $F^0$, there exists a dotted arrow below which makes a commutative diagram
\[
\xymatrix{
F^{\bullet}:& \cdots \ar[r] & F^{-2} \ar[r] & F^{-1} \ar[r] & F^0 \ar[r] & 0\\
F_1^{\bullet}:& \cdots \ar[r]& 0 \ar[r] & fF^0 \ar@{.>}[u] \ar@{^{(}->}[r] & F^0 \ar[r] \ar@{=}[u] & 0\\
}
\]
This morphism $F_1^{\bullet} \to F^{\bullet}$ in $\DePTor(R)$ gives an object $F_2^{\bullet}$ of $\DePTor(R)$ which fits in a triangle $F_1^{\bullet} \to F^{\bullet} \to F_2^{\bullet} \to $.
We have $\length(F_1^{\bullet}) \leq 0$ and $H^0(F_1^{\bullet}) \to H^0(F^{\bullet})$ is surjective by construction.
Therefore, by the induced long exact sequence, we can see that $\length(F_2^{\bullet}) < \length(F^{\bullet})$.
By the induction hypothesis, we obtain
\[
[F^{\bullet}] = [F_1^{\bullet}] + [F_2^{\bullet}] \in \Image(\varphi).
\]
This completes the proof of the surjectivity of $\varphi$.
\end{proof}

\begin{prop}\label{prop:30}
The diagram \eqref{eq:06} is anti-commutative.
\end{prop}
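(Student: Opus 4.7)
The plan is to reduce the identity to generators $[P]$ with $P \in \PP_{R}$ (which generate $K_0(\PP_{R})$ by construction), and then compare $\Fitt_{R}(P)$ and $\Det_{R}(\varphi([P]))$ directly by means of a length-one projective resolution. Since all four arrows in the diagram are group homomorphisms, checking on such classes suffices.

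For such a $P$, fix a projective resolution $0 \to F^{-1} \xrightarrow{d} F^0 \to P \to 0$ of length at most one (possible since $\pd_{R}(P) \leq 1$). Because $R = \Z_p[[G]]$ is semilocal (it is a finite product of complete Noetherian local rings, as $G/G'$ is finite), every finitely generated projective $R$-module is free; combined with the torsion of $P$, this forces $F^{-1}$ and $F^0$ to be free of the same rank $n$. After choosing bases, $d$ is a square matrix $M \in M_n(R)$ with $\det(M)$ a non-zero-divisor, and directly from the definition one has $\Fitt_{R}(P) = (\det M)$.

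On the determinant side, $F_P^{\bullet} = (F^{-1} \xrightarrow{d} F^0)$ in degrees $-1, 0$ is a projective resolution of $P$, so $\varphi([P]) = [F_P^{\bullet}]$ and
\[
\Det_{R}(F_P^{\bullet}) = \Det_{R}^{-1}(F^{-1}) \otimes_{R} \Det_{R}(F^0)
\]
is a free $R$-module of rank one, generated by $\bar e^{*} \otimes \bar f$ for the canonical volume forms $\bar e = e_1 \wedge \dots \wedge e_n$ and $\bar f = f_1 \wedge \dots \wedge f_n$. The main calculation is this: over $\Frac(R)$ the induced map $\Det(d)$ sends $\bar e$ to $(\det M)\bar f$, and the canonical Knudsen--Mumford trivialization of the determinant of the acyclic complex $\Frac(R) \otimes F_P^{\bullet}$ is the pairing
\[
\bar e^{*} \otimes \bar f \longmapsto \bar e^{*}\bigl(\Det(d)^{-1}(\bar f)\bigr) = (\det M)^{-1}.
\]
Hence the image of $\Det_{R}(F_P^{\bullet})$ inside $\Frac(R)$ is $(\det M)^{-1} R$, so $\Det_{R}(\varphi([P])) = \Fitt_{R}(P)^{-1}$. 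In additive notation on $\Inv_{R}$, this reads $-\Fitt_{R}([P])$, which is exactly the desired anti-commutativity.

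The main obstacle, and the source of the ``anti'' in anti-commutativity, is getting the sign right in the Knudsen--Mumford trivialization: the identification of the determinant of an acyclic two-term complex with $R$ is made via the \emph{inverse} differential, not the differential itself, which is precisely what inverts $\det M$ and produces the minus sign. Everything else --- well-definedness of $\varphi$ and of $\Det_{R}$, freeness of finitely generated projectives via semi-locality, and the fact that classes $[P]$ generate $K_0(\PP_{R})$ --- is either established in the excerpt or immediate from the definitions.
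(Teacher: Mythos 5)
Your argument is correct and follows essentially the same route as the paper: reduce to the generating classes $[P]$ for $P \in \PP_{R}$, pick a square free resolution $0 \to R^{n} \xrightarrow{M} R^{n} \to P \to 0$ (using that projectives over the semilocal ring $R$ are free, exactly as the paper implicitly does by choosing a free resolution), identify $\Fitt_{R}(P) = (\det M)$, and compute $\Det_{R}$ of the two-term complex concentrated in degrees $-1, 0$ to get $(\det M)^{-1}R$. The only difference is presentational: the paper delegates the final Knudsen--Mumford trivialization to \cite[Lemma 3.8]{GKK_09}, while you carry it out explicitly. (One small imprecision in your closing remark: the inversion is not because the trivialization ``uses the inverse differential'' as opposed to the differential --- the identification $\Det(F^{-1}) \simeq \Det(F^{0})$ is via $\Det(d)$ as usual --- but because the term $F^{-1}$ sits in odd degree and so contributes $\Det_{R}^{-1}(F^{-1})$ to $\Det_{R}(F^{\bullet})$, which is what flips $\det M$ to $(\det M)^{-1}$ under the induced pairing. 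Your displayed computation is nevertheless correct.)
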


\begin{proof}
For any $P \in \PP_{R}$, we can take a free resolution $0 \to R^a \overset{h}{\to} R^a \to P \to 0$ of $P$.
Then, by definition, we have 
\[
\varphi([P]) = [\cdots \to 0 \to R^a \overset{h}{\to} R^a \to 0 \to \cdots]
\]
which concentrates at the degrees $-1, 0$.
Now the assertion $\Fitt_R(P)^{-1} = \Det_{R}(\varphi([P]))$ follows from a standard computation as in \cite[Lemma 3.8]{GKK_09}, for example.
\end{proof}

Now Theorem \ref{thm:104} follows from Theorem \ref{thm:102}, Proposition \ref{prop:04}, Lemma \ref{lem:05}, Proposition \ref{prop:03}, and Proposition \ref{prop:30}.

\begin{rem}\label{rem:112}
By Theorem \ref{thm:82}, we can complement Theorem \ref{thm:104} as

\begin{equation}\label{eq:29}
\xymatrix{
K_0(\PP_{R}) \ar[r] \ar[rd]_{\Fitt_{R}} &
K_0(\QQ_{R}) \ar[r]^-{\varphi} \ar[d]^{\SF{0}{R}} &
K_0(\DePTor(R)) \ar[ld]^{\Det_{R}} \\
& \Inv_{R} &\\
}
\end{equation}
where all maps are isomorphic, the left triangle is commutative, and the right triangle is anti-commutative.
\end{rem}

\section{Arithmetic complexes}\label{sec:105}

In this section, we review facts on Galois cohomology complexes (this section does not have novelty).
We follow the notations in \cite{GKK_09}, and refer to Nekov\'{a}\v{r} \cite{Nek06} for detail.

Keep the notation of Section \ref{sec:01}.
We denote by $K_{\Sigma}$ the maximal algebraic extension of $K$ unramified outside $\Sigma$.
%
Let $\Z_p(1) = \varprojlim_{m} \mu_{p^m}$ be the Tate module.
We set
\[
\bT = \Z_p(1) \otimes_{\Z_p} \RR,
\]
which is a free $\RR$-module of rank $1$.
We equip $\bT$ with a $\Gal(\ol{K}/K)$-action, where the action on the second component $\RR$ is the inverse of the natural group homomorphism $\kappa: \Gal(\overline{K}/K) \twoheadrightarrow \Gal(L_{\infty}/K) \hookrightarrow \RR^{\times}$.
By the Shapiro lemma, we have isomorphisms such as
\begin{align}\label{eq:121}
H^i(K_{\Sigma}/K, \bT) \simeq \varprojlim_{K'} H^i(K_{\Sigma}/K', \Z_p(1)),\\
H^i(K_{\Sigma}/K, \bT^{\dual}(1)) \simeq H^i(K_{\Sigma}/L_{\infty}, \Q_p/\Z_p),
\end{align}
where $K'$ runs over finite extensions of $K$ in $L_{\infty}$ and the projective limit is taken with respect to the corestriction maps.

By the cochain complex construction (see \cite[(3.4.1)]{Nek06}), we obtain complexes like
\[
\derR\Gamma(K_{\Sigma}/K, \bT), 
\qquad \derR\Gamma(K_{\Sigma}/K, \bT^{\dual}(1))^{\dual},
\]
where $(-)^{\dual}$ denotes the Pontryagin dual (also for complexes), and the local counterparts of them.
We summarize properties of these complexes.

\begin{prop}\label{prop:13}
(1)
The complexes
\[
\derR\Gamma(K_{\Sigma}/K, \bT),
\qquad \derR\Gamma(K_{\Sigma}/K, \bT^{\dual}(1))^{\dual},
\]
and
\[
\derR\Gamma(K_v, \bT),
\qquad \derR\Gamma(K_v, \bT^{\dual}(1))^{\dual}
\]
for any finite place $v$ of $K$, are perfect.
That is, these complexes are contained in $\DeP(\RR)$.

(2)
We have a natural isomorphism
\[
\derR\Gamma(K_v, \bT) \simeq \derR\Gamma(K_v, \bT^{\dual}(1))^{\dual}[-2]
\]
for any finite place $v$ of $K$.

(3)
We have a triangle 
\begin{equation}\label{eq:07}
\derR\Gamma(K_{\Sigma}/K, \bT) \to \bigoplus_{v \in \Sigma} \derR\Gamma(K_v, \bT) \to \derR\Gamma(K_{\Sigma}/K, \bT^{\dual}(1))^{\dual}[-2] \to 
\end{equation}
in $\DeP(\RR)$.
\end{prop}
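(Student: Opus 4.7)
My plan is to deduce all three parts from standard results in the framework of Nekov\'a\v{r}'s monograph \cite{Nek06}, applied to the specific Iwasawa-theoretic coefficient module $\bT$. The overall strategy is to verify the hypotheses of Nekov\'a\v{r}'s general theorems in our setting and then cite them; there is no essentially new input required.

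First I would record the ring-theoretic features of $\RR = \Z_p[[\GG]]$ that will be used. Since $\GG$ is the Galois group of a two-variable abelian extension, it is of the form $\Z_p^2 \times \Delta$ for a finite abelian group $\Delta$, and therefore $\RR$ is a finite product of complete regular local rings of Krull dimension three. In particular $\RR$ has finite global dimension. The module $\bT = \Z_p(1) \otimes_{\Z_p} \RR$ is free of rank one over $\RR$ and carries a continuous $\Gal(\overline{K}/K)$-action via the inverse of $\kappa$.

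For part (1) I would argue that each of the listed complexes is bounded, with finitely generated cohomology, and has each cohomology group of finite projective dimension over $\RR$; perfectness then follows from the standard homological fact (see \cite[Prop.~4.2.9]{Nek06} or equivalent) that under these conditions the complex is quasi-isomorphic to a bounded complex of finitely generated projective $\RR$-modules. Boundedness comes from the fact that $\Gal(K_\Sigma/K)$ and the local Galois groups have finite $p$-cohomological dimension (here we use that $p$ is odd and $K$ is a number field). Finite generation of the cohomology follows from the Shapiro-style identifications in \eqref{eq:121} together with standard control arguments, while finite projective dimension is automatic from the finite global dimension of $\RR$. The same reasoning covers the local variants and the Pontryagin-dual versions.

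Part (2) is simply the derived-category form of local Tate duality with coefficients in the free $\RR$-module $\bT$; since $\bT$ and $\bT^{\dual}(1)$ are each other's duals in the appropriate sense, Nekov\'a\v{r}'s local duality theorem (\cite[Thm.~5.2.6]{Nek06}) applied directly yields the stated quasi-isomorphism. For part (3), I would invoke the global Poitou--Tate / Artin--Verdier duality triangle in its Iwasawa-theoretic form, which identifies the compactly supported complex $\derR\Gamma_c(K_\Sigma/K, \bT)$ with $\derR\Gamma(K_\Sigma/K, \bT^{\dual}(1))^{\dual}[-3]$; combining this identification with the defining triangle
\[
\derR\Gamma_c(K_\Sigma/K, \bT) \to \derR\Gamma(K_\Sigma/K, \bT) \to \bigoplus_{v \in \Sigma} \derR\Gamma(K_v, \bT) \to
\]
produces the triangle \eqref{eq:07} after the appropriate shift. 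The main obstacle, insofar as there is one, is purely notational: verifying that our conventions (the twist by $\kappa^{-1}$, the Pontryagin dual on the target, the shift by $[-2]$) line up with those of \cite{Nek06}, so that the triangle one extracts from the reference is exactly the one claimed. All the substantive mathematics is already contained in the cited framework.
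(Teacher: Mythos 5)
Your proposal follows essentially the same route as the paper: all three parts are reduced to citations of Nekov\'a\v{r} (perfectness from \cite[Prop.~(4.2.9)]{Nek06}, local Tate duality for (2), and Poitou--Tate duality for (3)). The one place where you should be a bit more careful is the perfectness of the Pontryagin-dual complexes $\derR\Gamma(K_{\Sigma}/K, \bT^{\dual}(1))^{\dual}$ and $\derR\Gamma(K_v, \bT^{\dual}(1))^{\dual}$: you assert that ``the same reasoning'' applies, but $\bT^{\dual}(1)$ is a discrete module rather than a finitely generated projective $\RR$-module, so \cite[Prop.~(4.2.9)]{Nek06} does not apply to it directly. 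The cleaner argument, and the one the paper uses, is to first establish (2) and (3) and then deduce perfectness of the dual complexes from the perfectness of $\derR\Gamma(K_{\Sigma}/K, \bT)$ and the $\derR\Gamma(K_v, \bT)$; this avoids having to reprove a duality-flavored finiteness statement from scratch. Otherwise the argument is sound and the mathematical content is the same.
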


\begin{proof}
(1) For $\derR\Gamma(K_{\Sigma}/K, \bT)$ and $\derR\Gamma(K_v, \bT)$, see \cite[Proposition (4.2.9)]{Nek06}.
The others follow from (2) and (3) below.

(2) This is the local Tate duality \cite[Proposition (5.2.4)]{Nek06}.

(3) This is the Poitou-Tate duality \cite[Proposition (5.4.3)]{Nek06}.
\end{proof}

Recall that, in our case, the weak Leopoldt conjecture $H^2(K_{\Sigma}/K, \bT^{\dual}(1)) = 0$ has been proved.
Then the long exact sequence associated to \eqref{eq:07} is
\begin{align}\label{eq:10}
0 & \to
H^1(K_{\Sigma}/K, \bT) \to
\bigoplus_{v \in \Sigma} H^1(K_v, \bT) \to
H^1(K_{\Sigma}/K, \bT^{\dual}(1))^{\dual}\\
& \to 
H^2(K_{\Sigma}/K, \bT) \to
\bigoplus_{v \in \Sigma} H^2(K_v, \bT) \to
H^0(K_{\Sigma}/K, \bT^{\dual}(1))^{\dual} \to
0.
\end{align}
By using \eqref{eq:121}, this sequence \eqref{eq:10} is regarded as the projective limit of the usual Poitou-Tate long exact sequences.



For a finite place $v$ of $K$, let $\GG_v \subset \GG$ be the decomposition group of $v$ in $L_{\infty}/K$.
In the notation in Definition \ref{defn:52}, we have $\GG_v = D_v(L_{\infty}/K)$.
Put 
\[
Z_v(L_{\infty}/K) = \Z_p[[\GG/\GG_v]]
\]
as in Definition \ref{defn:52}.

We summarize well-known descriptions of the cohomology groups in \eqref{eq:10}, using \eqref{eq:121}.

\begin{lem}\label{lem:12}
(1)
We have
\[
H^i(K_{\Sigma}/K, \bT^{\dual}(1))^{\dual} 
\simeq \begin{cases}
	\Z_p & (i = 0)\\
	X_{\Sigma}(L_{\infty}) & (i=1)\\
	0 & (i \neq 0, 1).
\end{cases}
\]

(2)
For any finite place $v$ of $K$, we have a natural isomorphism
\[
H^2(K_v, \bT) \simeq Z_v(L_{\infty}/K).
\]

(3)
We have natural isomorphisms
\[
H^1(K_{\Sigma}/K, \bT) \simeq \varprojlim_{K'} ((\OO_{K', \Sigma}^{\times})^{\wedge})
\]
and
\[
H^1(K_v, \bT) \simeq \varprojlim_{K'} (((K' \otimes_K K_v)^{\times})^{\wedge})
\]
for any finite place $v$, where $(-)^{\wedge}$ denotes the $p$-adic completion, $K'$ runs over finite extensions of $K$ in $L_{\infty}$, $\OO_{K', \Sigma}$ is the ring of $\Sigma$-integers of $K'$, and the inverse limit is taken with respect to the norm maps.
\end{lem}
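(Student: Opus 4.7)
The plan is that all three parts are standard identifications following from the Shapiro-type isomorphisms \eqref{eq:121} and their local analogues, combined with the dualities recorded in Proposition \ref{prop:13} and $p$-adic Kummer theory. The only task is to carry out the bookkeeping at each cohomological degree.

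For (1), I apply \eqref{eq:121} to rewrite the left-hand side as $H^i(K_\Sigma/L_\infty, \Q_p/\Z_p)^\dual$. In degree $0$ this is the dual of $\Q_p/\Z_p$, namely $\Z_p$. In degree $1$ it is the dual of $\Hom_{\text{cts}}(\Gal(K_\Sigma/L_\infty), \Q_p/\Z_p)$, whose Pontryagin dual is by definition the maximal abelian pro-$p$ Hausdorff quotient of $\Gal(K_\Sigma/L_\infty)$, i.e.\ $X_\Sigma(L_\infty)$. For $i = 2$, vanishing is the weak Leopoldt conjecture, already invoked above; for $i \geq 3$, vanishing follows from the cohomological-dimension bound $\operatorname{cd}_p(\Gal(K_\Sigma/L_\infty)) \leq 2$, valid since $p$ is odd and $L_\infty \supset K(\mu_p)$.

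For (2), I combine Proposition \ref{prop:13}(2) with the local analogue of \eqref{eq:121} to obtain
\[
H^2(K_v, \bT) \simeq H^0(K_v, \bT^\dual(1))^\dual \simeq \Bigl( \varinjlim_{K'} \bigoplus_{w \mid v} \Q_p/\Z_p \Bigr)^\dual,
\]
where $K'$ ranges over finite subextensions of $L_\infty/K$ and $w$ over places of $K'$ above $v$. The direct sum is free of rank one over $(\Q_p/\Z_p)[\Gal(K'/K)/D_v(K'/K)]$, so Pontryagin duality and the inverse limit identify the right-hand side with $\varprojlim_{K'} \Z_p[\Gal(K'/K)/D_v(K'/K)] = Z_v(L_\infty/K)$.

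For (3), Shapiro reduces the problem to computing $H^1(K_\Sigma/K', \Z_p(1))$ and $H^1(K'_w, \Z_p(1))$ at each finite level and passing to the inverse limit. The Kummer sequence on $\Spec \OO_{K', \Sigma}$ produces a short exact sequence
\[
0 \to (\OO_{K', \Sigma}^\times)^\wedge \to H^1(K_\Sigma/K', \Z_p(1)) \to T_p \operatorname{Pic}(\OO_{K', \Sigma}) \to 0,
\]
and since $\operatorname{Pic}(\OO_{K', \Sigma})$ is finite its $p$-adic Tate module vanishes, yielding the asserted global isomorphism. The local statement is just local Kummer theory $H^1(K'_w, \Z_p(1)) \simeq ((K'_w)^\times)^\wedge$, with $\bigoplus_{w \mid v}$ recovering $((K' \otimes_K K_v)^\times)^\wedge$. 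The mildest point is checking that corestriction on cohomology corresponds to the norm map on multiplicative groups, which is standard; beyond this I expect no essential obstacle.
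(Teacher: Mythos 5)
The paper states Lemma \ref{lem:12} without proof, describing it only as a summary of ``well-known descriptions'' obtained via \eqref{eq:121}; your argument supplies exactly the standard verification one would expect (Shapiro plus weak Leopoldt and the cohomological dimension bound for (1), local duality and Shapiro for (2), and the Kummer sequence with finiteness of $\operatorname{Pic}(\OO_{K',\Sigma})$ for (3)), so there is no discrepancy in approach. One very small correction: the bound $\operatorname{cd}_p(\Gal(K_\Sigma/L_\infty)) \leq 2$ follows because $p$ is odd and $K$ is imaginary quadratic (hence $K$ has no real places and $\Sigma \supset S_p(K)$), not because $L_\infty \supset K(\mu_p)$ --- which in fact need not hold, since $K(\mu_p)$ involves a prime-to-$p$ degree part that generally lies outside $L_\infty$ --- but this does not affect the validity of the conclusion.
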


%
%


The following is a special phenomenon in our setting.
It is essentially \cite[Lemma 5.14]{Kata_05}.

\begin{lem}\label{lem:11}
If $v \mid p$, then $\pd_{\RR}(H^1(K_v, \bT)) < \infty$ and $\pd_{\RR}(H^2(K_v, \bT)) < \infty$.
\end{lem}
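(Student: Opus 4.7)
The plan is to extract both finite-projective-dimension statements from the perfection of $\derR\Gamma(K_v, \bT) \in \DeP(\RR)$ (Proposition \ref{prop:13}(1)), once one knows the cohomology concentrates in degrees $1$ and $2$ and that $H^2$ itself has finite projective dimension. The claim on $H^2$ is essentially Remark \ref{rem:85}; the claim on $H^1$ will then follow by a truncation-triangle argument in $\DeP(\RR)$.

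First I would check that $H^0(K_v, \bT) = 0$. By Shapiro's lemma (as in \eqref{eq:121})
\[
H^0(K_v, \bT) \simeq \varprojlim_{K'} \prod_{w \mid v} T_p(\mu_{p^\infty}(K'_w)),
\]
where $K'$ ranges over finite intermediate extensions $K \subseteq K' \subseteq L_\infty$ and $w$ over the places of $K'$ above $v$. Each $K'_w$ is a local field of characteristic zero, so $\mu_{p^\infty}(K'_w)$ is finite; then the transition maps (raising to the $p$-th power) are eventually zero on the stable finite group of roots of unity, forcing the Tate module to vanish. Second, Lemma \ref{lem:12}(2) identifies $H^2(K_v, \bT) \simeq Z_v(L_\infty/K)$, and Remark \ref{rem:85}—whose crucial input is that the decomposition group $\GG_v$ is $p$-torsion-free for $v \mid p$, being a quotient of the profinite completion of $K_v^\times \simeq \Q_p^\times$—gives $\pd_{\RR}(Z_v(L_\infty/K)) < \infty$, and hence $\pd_{\RR}(H^2(K_v, \bT)) < \infty$.

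Since local Galois cohomology vanishes outside degrees $0,1,2$ and $H^0(K_v, \bT) = 0$, the perfect complex $\derR\Gamma(K_v, \bT)$ has cohomology only in degrees $1$ and $2$, and truncation produces a distinguished triangle
\[
H^1(K_v, \bT)[-1] \to \derR\Gamma(K_v, \bT) \to H^2(K_v, \bT)[-2] \to
\]
in $\DeP(\RR)$. The right-hand term is perfect because $H^2(K_v, \bT)$ has finite projective dimension, and the middle term is perfect by Proposition \ref{prop:13}(1); so the left-hand term $H^1(K_v, \bT)[-1]$ is perfect too, which is exactly $\pd_{\RR}(H^1(K_v, \bT)) < \infty$. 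The only genuinely non-formal ingredient is the vanishing of $H^0(K_v, \bT)$, reflecting the finiteness of $\mu_{p^\infty}(K'_w)$ in local fields; everything else is derived-category bookkeeping combined with the special projective-dimension statement in Remark \ref{rem:85}.
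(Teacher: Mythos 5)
Your proof is correct and follows the same route as the paper: identify $H^2(K_v,\bT) \simeq Z_v(L_\infty/K)$ via Lemma \ref{lem:12}(2) and Remark \ref{rem:85}, then use perfectness of $\derR\Gamma(K_v,\bT)$ to bootstrap to $H^1$. The paper compresses the bootstrap into the single phrase ``since the complex is perfect,'' whereas you usefully spell out the vanishing $H^0(K_v,\bT)=0$ and the truncation triangle that make the bootstrap go through.
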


\begin{proof}
The assertion for $H^2$ follows from Remark \ref{rem:85} and Lemma \ref{lem:12}(2).
Then the assertion for $H^1$ follows since the complex is perfect.
\end{proof}

\section{Fitting ideals of Iwasawa modules}\label{sec:107}

In this section, we prove Theorem \ref{thm:34}.
Let $S$ and $\Sigma$ be as in Theorem \ref{thm:34}.
For readability, we omit $L_{\infty}/K$ from the notation when no confusion can occur; for example, $X_S = X_S(L_{\infty})$, $Z_{\Sigma \setminus S}^0 = Z_{\Sigma \setminus S}^0(L_{\infty}/K)$, and $\iL_{\Sigma, S} = \iL_{\Sigma, S}(L_{\infty}/K)$.

\subsection{Complex $C_{\Sigma, S}$}
In this subsection, we define and study a complex $C_{\Sigma, S}$ which will play an important role in the proof of Theorem \ref{thm:34}.
The idea behind the definition is the same as \cite{GKK_09} in the one-variable case.

\begin{defn}\label{defn:36}
Using the second morphism in \eqref{eq:07}, we construct $C_{\Sigma, S} = C_{\Sigma, S}(L_{\infty}/K)$ which fits in a triangle
\begin{equation}\label{eq:40}
C_{\Sigma, S} 
\to \bigoplus_{v \in \Sigma \setminus S} \derR\Gamma(K_v, \bT) 
\to \derR\Gamma(K_{\Sigma}/K, \bT^{\dual}(1))^{\dual}[-2] \to.
\end{equation}
Note that, in the direct sum, $v$ also takes the value $\ol{\pe}$.
\end{defn}

By the triangles \eqref{eq:07} and \eqref{eq:40}, the complex $C_{\Sigma, S}$ also fits in the following triangle
\begin{equation}\label{eq:14}
C_{\Sigma, S} \to \derR\Gamma(K_{\Sigma}/K, \bT) \to \bigoplus_{v \in S} \derR\Gamma(K_v, \bT) \to.
\end{equation}

\begin{prop}\label{prop:37}
The complex $C_{\Sigma, S}$ is in $\DePTor(\RR)$.
We have $H^i(C_{\Sigma, S}) = 0$ unless $i = 2$, and we have an exact sequence
\begin{equation}\label{eq:39}
0 \to X_{S} \to H^2(C_{\Sigma, S}) \to Z_{\Sigma \setminus S}^0 \to 0.
\end{equation}
\end{prop}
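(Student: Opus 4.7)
The plan is to extract the long exact cohomology sequence from the defining triangle \eqref{eq:40}, identify every term using Lemma~\ref{lem:12} and the Poitou–Tate sequence \eqref{eq:10}, and read off each cohomology group of $C_{\Sigma,S}$. Perfectness is immediate: both outer terms of \eqref{eq:40} are perfect by Proposition~\ref{prop:13}, so $C_{\Sigma,S} \in \DeP(\RR)$; the torsion-ness of the cohomology will fall out of the computation below.

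First I would pin down the cohomology of $Z := \derR\Gamma(K_\Sigma/K,\bT^{\dual}(1))^{\dual}[-2]$. Comparing the long exact sequence of \eqref{eq:07} with the Poitou–Tate sequence \eqref{eq:10} forces $H^1(Z) = X_\Sigma$ and $H^2(Z) = \Z_p$, with all other $H^i(Z)$ vanishing. Combined with Lemma~\ref{lem:12}(2) for the local $H^2$'s and the (easy) vanishing of the local $H^0$'s, the long exact sequence of \eqref{eq:40} collapses to
\[
0 \to H^1(C_{\Sigma,S}) \to \bigoplus_{v\in\Sigma\setminus S} H^1(K_v,\bT) \xrightarrow{\alpha} X_\Sigma \to H^2(C_{\Sigma,S}) \to \bigoplus_{v\in\Sigma\setminus S} Z_v \xrightarrow{\beta} \Z_p \to H^3(C_{\Sigma,S}) \to 0.
\]

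Next I would identify the two connecting maps. The map $\beta$ is the augmentation, hence surjective (since $\ol{\pe}\in\Sigma\setminus S$) with kernel $Z^0_{\Sigma\setminus S}$ by definition; this forces $H^3(C_{\Sigma,S}) = 0$ and produces the quotient term in \eqref{eq:39}. The map $\alpha$ is the Poitou–Tate global reciprocity map from \eqref{eq:10} restricted to the $v\in\Sigma\setminus S$ summands; by global class field theory its image is generated by the inertia of $X_\Sigma$ at those places, whose cokernel is $X_S$, yielding the subgroup $X_S$ in \eqref{eq:39}.

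The remaining point, which I expect to be the main obstacle, is the vanishing $H^1(C_{\Sigma,S}) = \ker(\alpha) = 0$. Via \eqref{eq:10} this is equivalent to the injectivity of the localization $H^1(K_\Sigma/K,\bT) \to \bigoplus_{v\in S} H^1(K_v,\bT)$, a weak-Leopoldt--type statement for the $\Z_p^2$-tower $L_\infty/K$. Because $\pe\in S$ and the $\pe$-decomposition group in $\GG$ has full rank, I would verify this directly at the $\pe$-component using Lemma~\ref{lem:11} and the description of $H^1(K_\pe,\bT)$ in Lemma~\ref{lem:12}(3); alternatively one can deduce it from Nekov\'{a}\v{r}'s Selmer-complex duality together with the weak Leopoldt vanishing $H^2(K_\Sigma/K,\bT^{\dual}(1)) = 0$. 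Once this is in hand, the exact sequence reduces to \eqref{eq:39}, and the torsion-ness of $H^2(C_{\Sigma,S})$---hence membership of $C_{\Sigma,S}$ in $\DePTor(\RR)$---follows from the torsion-ness of $X_S$ and $Z^0_{\Sigma\setminus S}$.
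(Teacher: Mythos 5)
Your approach is essentially the paper's: extract the long exact cohomology sequence from the triangle~\eqref{eq:40}, identify the terms via Lemma~\ref{lem:12}, and reduce the statement to the exactness of $0 \to \bigoplus_{v\in\Sigma\setminus S} H^1(K_v,\bT) \to X_\Sigma \to X_S \to 0$. You correctly single out the injectivity of $\alpha$ (equivalently, of the localization $H^1(K_\Sigma/K,\bT)\to\bigoplus_{v\in S}H^1(K_v,\bT)$) as the crux. The paper disposes of this in one stroke by invoking the validity of the $\overline{\pe}$-adic Leopoldt conjecture, which furnishes the exact sequence above as a known fact.

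Two cautions on the spots where you deviate. First, your suggested routes to the injectivity are not clearly adequate: the weak Leopoldt vanishing $H^2(K_\Sigma/K,\bT^{\dual}(1))=0$ only gives injectivity of $H^1(K_\Sigma/K,\bT)$ into the \emph{full} direct sum $\bigoplus_{v\in\Sigma}H^1(K_v,\bT)$ (this is exactly why~\eqref{eq:10} begins with a $0$), not into the smaller sum over $S$; and a ``direct verification at $\pe$'' using Lemmas~\ref{lem:11}--\ref{lem:12}(3) does not look like it can be carried out with only those inputs. The statement really is the $\overline{\pe}$-adic Leopoldt conjecture, and one should cite it as such rather than re-derive it. Second, your identification of $\operatorname{Im}(\alpha)$ with the inertia of $X_\Sigma$ at $\Sigma\setminus S$ is correct, but the reason the full local $H^1(K_v,\bT)=\varprojlim((K'\otimes K_v)^\times)^\wedge$ (and not merely its unit part) contributes only inertia is that for $v\in\Sigma\setminus S$ the residue extension in $L_\infty/K$ is infinite, so the unramified (Frobenius) direction dies in the inverse limit; the paper packages this too into the cited exact sequence, so you should at least flag the point if you present the argument in your more explicit form.
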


\begin{proof}
By Proposition \ref{prop:13}(1), the complex $C_{\Sigma, S}$ is in $\DeP(\RR)$.
The triangle \eqref{eq:40} induces a long exact sequence
\begin{align}
0 & \to H^1(C_{\Sigma, S}) 
 \to \bigoplus_{v \in \Sigma \setminus S} H^1(K_{v}, \bT) 
 \to H^1(K_{\Sigma}/K, \bT^{\dual}(1))^{\dual} \\
& \to H^2(C_{\Sigma, S})
 \to \bigoplus_{v \in \Sigma \setminus S} H^2(K_{v}, \bT) 
\to H^0(K_{\Sigma}/K, \bT^{\dual}(1))^{\dual} 
\to H^3(C_{\Sigma, S}) 
\to 0.
\end{align}
Using the validity of the $\overline{\pe}$-adic Leopoldt conjecture, we have an exact sequence
\[
0 \to \bigoplus_{v \in \Sigma \setminus S} H^1(K_{v}, \bT) \to X_{\Sigma} \to X_{S} \to 0.
\]
Then by the descriptions in Lemma \ref{lem:12}, we obtain $H^i(C_{\Sigma, S}) = 0$ for $i \neq 2$ and the exact sequence \eqref{eq:39}.
Finally, \eqref{eq:39} shows that $H^2(C_{\Sigma, S})$ is torsion, namely $C_{\Sigma, S}$ is in $\DePTor(\RR)$.
\end{proof}

\begin{cor}\label{cor:41}
We have
\[
\Fitt_{\RR}(X_{S}) = \Det_{\RR}^{-1}(C_{\Sigma, S}) \Fitt_{\RR}^{[1]}(Z_{\Sigma \setminus S}^0).
\]
\end{cor}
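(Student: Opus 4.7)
The plan is to transfer the defining exact sequence of Proposition \ref{prop:37} into an identity of invertible fractional ideals via the $K_0$-theoretic formalism of Section \ref{sec:02} combined with the shifts of Fitting invariants from Section \ref{subsec:91}. Since $C_{\Sigma, S}$ is perfect with cohomology concentrated in degree $2$, it is quasi-isomorphic to $H^2(C_{\Sigma, S})[-2]$ and the cohomology module $H^2(C_{\Sigma, S})$ has finite projective dimension over $\RR$, hence belongs to $\QQ_{\RR}$. Using the relation $[M[n]] = (-1)^n[M]$ in the Grothendieck group of a triangulated category, we have $[C_{\Sigma, S}] = [H^2(C_{\Sigma, S})[-2]] = [H^2(C_{\Sigma, S})] = \varphi([H^2(C_{\Sigma, S})])$ in $K_0(\DePTor(\RR))$, and the anti-commutativity of the right triangle in diagram \eqref{eq:29} of Remark \ref{rem:112} yields
\[
\Det_{\RR}^{-1}(C_{\Sigma, S}) = \SF{0}{\RR}\bigl(H^2(C_{\Sigma, S})\bigr).
\]

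Next, I would apply Theorem \ref{thm:82}(3) to the short exact sequence $0 \to X_S \to H^2(C_{\Sigma, S}) \to Z_{\Sigma \setminus S}^0 \to 0$ of Proposition \ref{prop:37}, with $d = 1$, $Q_1 = H^2(C_{\Sigma, S})$, and $n = 1$. Since $\pd_{\RR}(H^2(C_{\Sigma, S})) < \infty$, Theorem \ref{thm:82}(2) gives $\SF{1}{\RR}(H^2(C_{\Sigma, S})) = \SF{0}{\RR}(H^2(C_{\Sigma, S}))^{-1}$, and rearranging the resulting identity together with the display above produces
\[
\SF{0}{\RR}(X_S) = \SF{0}{\RR}\bigl(H^2(C_{\Sigma, S})\bigr) \cdot \SF{1}{\RR}(Z_{\Sigma \setminus S}^0) = \Det_{\RR}^{-1}(C_{\Sigma, S}) \cdot \SF{1}{\RR}(Z_{\Sigma \setminus S}^0).
\]

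It then remains to identify $\SF{0}{\RR}(X_S) = \Fitt_{\RR}(X_S)$ and $\SF{1}{\RR}(Z_{\Sigma \setminus S}^0) = \Fitt_{\RR}^{[1]}(Z_{\Sigma \setminus S}^0)$ via Lemma \ref{lem:83}; I expect this step to be the main technical obstacle. By the lemma, it suffices to verify the projective-dimension bounds $\pd_{\RR'}(X_S) \leq 1$ and $\pd_{\RR'}(Z_{\Sigma \setminus S}^0) \leq 2$. Since $\RR'$ is a three-dimensional regular local ring, the first bound is equivalent to the absence of nonzero pseudo-null $\RR'$-submodules in $X_S$, a delicate input from the theory of $S$-ramified Iwasawa modules of $\Z_p^2$-extensions of imaginary quadratic fields; the second is equivalent to the absence of nonzero finite-length $\RR'$-submodules in $Z_{\Sigma \setminus S}^0$, which I would attack prime by prime using the structure of the local modules $Z_v(L_\infty/K)$ together with Remark \ref{rem:85}.
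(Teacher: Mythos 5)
Your proposal is correct and follows essentially the same route as the paper's proof: identify $\Det_\RR^{-1}(C_{\Sigma,S})$ with $\SF{0}{\RR}(H^2(C_{\Sigma,S}))$ via the diagram of Remark~\ref{rem:112}, apply the exact-sequence property of $\SF{n}{\RR}$ to \eqref{eq:39}, and then pass from $\SF{n}{\RR}$ to $\Fitt^{[n]}_\RR$ under the projective-dimension hypotheses of Lemma~\ref{lem:83}. The one input you flag as the ``main technical obstacle,'' namely $\pd_{\RR'}(X_S)\leq 1$ (with $\RR'=\Lambda=\Z_p[[\Gal(L_\infty/L)]]$), is indeed a genuinely arithmetic fact; the paper does not reprove it but simply cites \cite[Proposition 5.15]{Kata_05}.
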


\begin{proof}
By the properties of $\SF{n}{\RR}$, the exact sequence \eqref{eq:39} shows
\[
\SF{0}{\RR}(X_{S}) = \SF{0}{\RR}(H^2(C_{\Sigma, S})) \SF{1}{\RR}(Z_{\Sigma \setminus S}^0).
\]
We observe the following.
\begin{itemize}
\item $\SF{0}{\RR}(X_{S}) = \Fitt_{\RR}(X_S)$ by $\pd_{\Lambda}(X_{S}) \leq 1$ (cf. \cite[Proposition 5.15]{Kata_05}), where $\Lambda = \Z_p[[\Gal(L_{\infty}/L)]]$ plays the role of $R'$ in Subsection \ref{subsec:91}.
\item $\SF{0}{\RR}(H^2(C_{\Sigma, S})) = \Det_{\RR}^{-1}(C_{\Sigma, S})$ by Theorem \ref{thm:104} (actually by Remark \ref{rem:112}).
\item $\SF{1}{\RR}(Z_{\Sigma \setminus S}^0) = \vSF{1}{\RR}(Z_{\Sigma \setminus S}^0)$ by Lemma \ref{lem:83}.
\end{itemize}
Therefore, the corollary follows.
\end{proof}

By Corollary \ref{cor:41}, the proof of Theorem \ref{thm:34} reduces to showing the following.

\begin{thm}\label{thm:42}
We have
\[
\Det_{\RR}^{-1}(C_{\Sigma, S}) = \iL_{\Sigma, S}.
\]
\end{thm}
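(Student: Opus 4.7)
The plan is to prove Theorem \ref{thm:42} by first establishing it in a ``tautological'' base case and then propagating it by functoriality and compatibility arguments.

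For the base case I take $L = L_{\ff}$, $\Sigma = \Sigma_{\ff}$, and $S = \{\pe\}$, where $\ff$ is a nonzero integral ideal of $K$ prime to $p$, so that $L_{\ff, \infty} = K(\ff p^{\infty})$. The equivariant main conjecture of Johnson-Leung--Kings \cite[Theorem 5.7]{JK11} is phrased as an equality of determinant ideals involving a system of elliptic units. After matching notation via the defining triangles \eqref{eq:07} and \eqref{eq:40} together with the cohomological descriptions of Lemma \ref{lem:12}, their result translates into an equality of the form $\Det_{\RR_{\ff}}^{-1}(C_{\Sigma_{\ff}, \{\pe\}}(L_{\ff, \infty}/K))$ equals the ideal of $\RR_{\ff}$ generated by the elliptic unit measure. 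By de Shalit's theorem (Theorem \ref{thm:92}, originally \cite[Theorem II.4.14]{dS87}), this measure becomes $\mu_{\ff}$ after base change to $\RRur_{\ff}$, and by Remark \ref{rem:113}(1) we have $\mu_{\ff} = \LL_{\Sigma_{\ff}, \{\pe\}}(L_{\ff, \infty}/K)$. Definition \ref{defn:96}(2) combined with the faithfully flat descent of Lemma \ref{lem:90} then yields the base case $\Det_{\RR_{\ff}}^{-1}(C_{\Sigma_{\ff}, \{\pe\}}) = \iL_{\Sigma_{\ff}, \{\pe\}}$.

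From the base case, the general case is obtained in two reduction steps. First, for arbitrary $L/K$ but still $S = \{\pe\}$ and $\Sigma = \Sigma_{\ff_L}$ with $\ff_L$ the prime-to-$p$ conductor of $L$, I apply $- \otimesL_{\RR_{\ff_L}} \RR$ to the triangle defining $C_{\Sigma_{\ff_L}, \{\pe\}}(L_{\ff_L, \infty}/K)$; the matching behavior of $\iL$ under this base change is furnished by Remark \ref{rem:113}(1). Second, for general $\Sigma$ and $S$, I establish the naturality of both sides under the elementary moves $\Sigma \rightsquigarrow \Sigma \cup \{v\}$ (with $v \notin S$) and $S \rightsquigarrow S \cup \{v\}$ (with $v \neq \pe, \ol{\pe}$). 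Each such move modifies $C_{\Sigma, S}$ by a triangle whose third term is the local complex $\derR\Gamma(K_v, \bT)$ (possibly shifted), so by multiplicativity of $\Det^{-1}$ in triangles it changes $\Det_{\RR}^{-1}(C_{\Sigma, S})$ by $\Det_{\RR}^{-1}(\derR\Gamma(K_v, \bT))^{\pm 1}$; a direct cohomology computation shows this matches the variation of $\iL_{\Sigma, S}$ recorded in the interpolation formula \eqref{eq:97}, which is the content of the forthcoming Lemma \ref{lem:47}.

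The main technical obstacle is the precise identification in the base case. The main conjecture of \cite{JK11} is formulated in terms of the Iwasawa cohomology module $H^1(K_{\Sigma}/K, \bT)$ and elliptic unit classes therein, rather than directly as a statement about $\Det_{\RR_{\ff}}^{-1}(C_{\Sigma_{\ff}, \{\pe\}})$. Matching the two on the nose (not just up to a unit in $\RR_{\ff}^{\times}$) requires carefully tracking identifications through the Poitou--Tate exact sequence \eqref{eq:10}, local Tate duality (Proposition \ref{prop:13}(2)), and the Shapiro-type isomorphism of Lemma \ref{lem:12}(3). A secondary subtlety lies in the asymmetric role of $\pe$ and $\ol{\pe}$: because $\ol{\pe}$ always sits in $\Sigma \setminus S$ whereas $\pe$ lies in $S$, both primes over $p$ contribute local factors to $\Det_{\RR}^{-1}(C_{\Sigma, S})$, and these must balance against the single Euler factor $1 - \varepsilon(\pe)/p$ in the interpolation formula \eqref{eq:97}. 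Lemma \ref{lem:11} is essential here, guaranteeing that the local complexes at primes over $p$ have finite projective dimension so that the determinant bookkeeping takes place inside $\Inv_{\RR}$ rather than merely among fractional ideals.
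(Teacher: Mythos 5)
Your overall strategy — reduce to the base case $L = L_\ff$, $\Sigma = \Sigma_\ff$, $S = \{\pe\}$ via local computations (Lemma \ref{lem:47}) and descent from ray-class towers (Lemma \ref{lem:46}), then appeal to Johnson-Leung--Kings and de Shalit — is the paper's. The gap is in the base case, where you describe the passage from \cite[Theorem 5.7]{JK11} to $\Det^{-1}_{\RR_\ff}(C_{\Sigma_\ff, \{\pe\}}) = \iL_{\Sigma_\ff, \{\pe\}}$ as ``matching notation'' and ``tracking identifications.'' This elides the actual construction: the paper splits $C_{\Sigma_\ff, \{\pe\}}$ along the elliptic-unit module $\EE_\ff = \sI_\ff \zeta(\ff) \subset H^1(K_{\Sigma_\ff}/K, \bT_\ff)$, building auxiliary complexes $D^{\glob}_{\Sigma_\ff, \EE_\ff}$ and $D^{\local}_{\pe, \EE_\ff}$ by mapping $\EE_\ff[-1]$ into the global and local cohomology complexes. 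The resulting triangle \eqref{eq:15} gives $-[C_{\Sigma_\ff, \{\pe\}}] = [D^{\local}_{\pe, \EE_\ff}] - [D^{\glob}_{\Sigma_\ff, \EE_\ff}]$ in $K_0(\DePTor(\RR_\ff))$. The equivariant main conjecture of \cite{JK11} is then exactly $[D^{\glob}_{\Sigma_\ff, \EE_\ff}] = 0$ (Theorem \ref{thm:48}), while de Shalit's Coleman-map computation (Theorem \ref{thm:94}) gives $\Det_{\RR_\ff}(D^{\local}_{\pe, \EE_\ff}) = \iL_{\Sigma_\ff, \{\pe\}}$ (Corollary \ref{cor:49}). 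The point you are missing is that \cite{JK11} compares a global cohomology complex to $\EE_\ff$, which by itself has no direct contact with $\mu_\ff$; the ``elliptic unit measure'' you invoke is the image of $\zeta(\ff)$ under the Coleman map, i.e.\ a statement about $D^{\local}_{\pe, \EE_\ff}$, and the $\EE_\ff$-splitting is precisely what transfers this local information back to $C_{\Sigma_\ff, \{\pe\}}$.

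One smaller point: you cite Lemma \ref{lem:11} as essential so that the determinant bookkeeping lands in $\Inv_\RR$. That is not its role — the determinant of any object of $\DePTor(\RR)$ is automatically an invertible ideal by Lemma \ref{lem:05}, regardless of any projective-dimension hypothesis. Finite projective dimension enters instead at the level of cohomology modules, via Remark \ref{rem:85} rather than Lemma \ref{lem:11}, in the proof of Corollary \ref{cor:49}: it allows one to decompose $\Det_{\RR_\ff}(D^{\local}_{\pe, \EE_\ff})$ degreewise, discard the pseudo-null piece $H^2 \simeq Z_\pe(L_{\ff,\infty}/K)$, and isolate $\SF{0}{\RR_\ff}(H^1(K_\pe, \bT_\ff)/\EE_\ff)$, which is where Theorem \ref{thm:94} and hence $\mu_\ff$ actually appear.
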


In the rest of this section, we prove Theorem \ref{thm:42}.


\subsection{Reduction to special cases}\label{subsec:45}

In this subsection, by computing local factors, we reduce the proof of Theorem \ref{thm:42} to special cases.

We recall the description of the local factor $\Det_{\RR}(\derR\Gamma(K_v, \bT))$.
For each finite place $v$ of $K$ outside $p$, let $\TT_v \subset \GG$ be the inertia subgroup and $\sigma_v \in \GG/\TT_v$ the Frobenius automorphism.

\begin{prop}[{\cite[Proposition 3.13]{GKK_09}}]\label{prop:43}
For every finite place $v$ of $K$ outside $p$, there exists 
a unique element $f_v \in \Frac(\RR)^{\times}$ satisfying the following.

(i) We have
\[
\Det_{\RR}(\derR\Gamma(K_v, \bT)) = (f_v).
\]

(ii) For any continuous character $\varepsilon: \GG \to \overline{\Q_p}^{\times}$ 
which is nontrivial on $\GG_v$, we have
\[
\varepsilon(f_v) =
\begin{cases}
	\frac{1 - \varepsilon(\sigma_v)N(v)^{-1}}{1 - \varepsilon(\sigma_v)} & 
	      \text{if $\varepsilon$ is unramified at $v$;}\\
	    1 & 
			  \text{if $\varepsilon$ is ramified at $v$.}
\end{cases}
\]
\end{prop}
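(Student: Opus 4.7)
The plan is to realize $\derR\Gamma(K_v, \bT)$ explicitly as a bounded complex of finitely generated free $\RR$-modules with vanishing Euler characteristic, and to read both clauses off that explicit complex. Since $v$ lies over a rational prime $\ell \neq p$, wild inertia at $v$ acts trivially on the pro-$p$ module $\bT$, and the cyclotomic character is unramified at $v$; hence the $G_{K_v}$-action on $\bT = \Z_p(1) \otimes_{\Z_p} \RR$ factors through the tame quotient of $G_{K_v}$ and is entirely recorded by $\TT_v \subset \GG$ together with a Frobenius lift of $\sigma_v$.

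The first step would be to apply Hochschild--Serre for $G_{K_v} \twoheadrightarrow \Gal(K_v^{\ur}/K_v) \cong \hat{\Z}$, using that $K_v^{\ur}$ has $p$-cohomological dimension $1$. This presents $\derR\Gamma(K_v, \bT)$ as the mapping fiber of $1 - \sigma_v$ acting on the tame inertia cohomology. Choosing a topological generator $t_v$ of $\TT_v$, the latter is quasi-isomorphic to the Koszul complex $[\bT \xrightarrow{t_v-1} \bT]$ of free rank-one $\RR$-modules. Splicing with $1 - \sigma_v$ would then produce an explicit representative of $\derR\Gamma(K_v, \bT)$ by a bounded complex of free $\RR$-modules with equal total ranks in even and odd degree, and I would define $f_v$ as the associated determinant inside $\Frac(\RR)^{\times}$. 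Clause (i) would then hold by construction.

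For clause (ii), I would specialize the complex along a continuous character $\varepsilon$ nontrivial on $\GG_v$. If $\varepsilon$ is unramified at $v$, then $\varepsilon(t_v) = 1$, so the inertia differential $t_v - 1$ becomes zero after specialization, and what remains is a Frobenius two-term complex whose determinant is computed by a direct calculation (using that $\bT$ has $\RR$-rank one with Frobenius acting via $N(v)\kappa(\sigma_v)^{-1}$) to give the Euler factor $(1 - \varepsilon(\sigma_v) N(v)^{-1})/(1 - \varepsilon(\sigma_v))$. If instead $\varepsilon$ is ramified at $v$, then $\varepsilon(t_v) - 1$ is a unit, the inertia Koszul complex becomes acyclic, and hence the full specialized complex is acyclic with determinant $1$. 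Uniqueness of $f_v$ as an element of $\Frac(\RR)^{\times}$ will then follow because characters nontrivial on $\GG_v$ form a Zariski-dense family in $\Spec(\RR)$, since $\GG_v$ is a proper closed subgroup of $\GG$ for $v \nmid p$.

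The hard part will be upgrading the derived-category identification to an honest bounded complex of free $\RR$-modules whose determinant can be extracted as a concrete element of $\Frac(\RR)^{\times}$, rather than merely an invertible submodule defined up to canonical isomorphism; this requires care in choosing compatible set-theoretic splittings of the inertia and Frobenius extensions so that the Koszul picture is literal and not only up to quasi-isomorphism. A secondary subtlety will be matching the Tate-twist and duality conventions so that the Euler factor appears in the precise form $(1 - \varepsilon(\sigma_v) N(v)^{-1})/(1 - \varepsilon(\sigma_v))$ stated in (ii), rather than its inverse or a shifted variant.
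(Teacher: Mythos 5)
The paper does not give a from-scratch argument for this proposition: it cites \cite[Proposition 3.13]{GKK_09} and adds only the observation that, because the statement is ``essentially local'' (the complex $\derR\Gamma(K_v,\bT)$ and the element $f_v$ factor through $\Z_p[[\GG_v]]$), the one-variable proof in \cite{GKK_09} transfers verbatim to the two-variable setting. Your proposal, by contrast, reconstructs the underlying computation: for $v\nmid p$ the $G_{K_v}$-action on $\bT$ factors through the tame quotient, a Koszul resolution for the pro-$p$ inertia composed with the Frobenius two-term complex gives an explicit perfect representative, and specializing at characters unramified or ramified at $v$ produces the two cases of clause (ii). That is precisely the kind of local computation the cited reference performs, so your approach is the natural one; the paper simply outsources it. What the paper's route buys is economy and the explicit remark that $f_v$ already lives in $\Frac(\Z_p[[\GG_v]])$, a fact your argument would also yield since every ingredient of your complex is defined over $\Z_p[[\GG_v]]$.

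Two points in your sketch should be tightened. First, the logical order is inverted: the determinant functor only produces the invertible ideal in (i), so it pins $f_v$ down up to a unit of $\RR$; you cannot ``define $f_v$ as the associated determinant'' and then verify (ii). Rather, you should say that (ii) selects the distinguished generator of the determinant ideal, and then prove existence by exhibiting one generator and checking the interpolation. Second, for the uniqueness-by-density step you need $\GG_v \neq 1$ on every component of $\Spec(\RR)$; this holds because for $v\nmid p$ the Frobenius at $v$ has infinite order in $\Gal(K_\infty/K)\cong\Z_p^2$, so $\GG_v$ is not contained in the torsion part of $\GG$ and the characters trivial on $\GG_v$ cut out a proper closed subset of each component. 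Also note the small sign/twist convention: since $\GG$ acts on the $\RR$-factor of $\bT$ through $\kappa^{-1}$, the inertia differential is $t_v^{-1}-1$ and the Frobenius differential involves $N(v)\sigma_v^{-1}$; these agree with your $t_v-1$ only up to units, which suffices for the ideal in (i) but must be tracked for the exact interpolation formula in (ii) — a point you correctly flag.
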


\begin{proof}
This is proved in \cite{GKK_09}.
Though \cite{GKK_09} treats the one-variable case, this proposition is essentially a local statement and indeed we find $f_v$ in $\Frac(\Z_p[[\GG_v]])$.
\end{proof}

Let $v$ be a finite place of $K$ outside $p$ which is unramified in $L/K$.
We denote by $K_v^{\ur}$ the maximal unramified extension of $K_v$.
Then the absolute Galois group of $K_v^{\ur}$ acts on $\bT$ trivially.
We define a complex $\derR\Gamma_{/f}(K_v, \bT)$ by postulating a triangle (cf. \cite[(7.1.2)]{Nek06})
\[
\derR\Gamma(K_v^{\ur}/K_v, \bT) \overset{\Inf}{\to} \derR\Gamma(K_v, \bT) \to \derR\Gamma_{/f}(K_v, \bT) \to.
\]

\begin{prop}\label{prop:88}
For every finite place $v$ of $K$ outside $p$ which is unramified in $L/K$, 
we have
\[
\Det_{\RR}(\derR\Gamma_{/f}(K_v, \bT)) = (1 - \sigma_v^{-1})^{-1}.
\]
\end{prop}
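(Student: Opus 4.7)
The plan is to apply $\Det_{\RR}$ to the defining triangle and use multiplicativity along distinguished triangles, giving
\[
\Det_{\RR}(\derR\Gamma_{/f}(K_v, \bT)) = \Det_{\RR}(\derR\Gamma(K_v, \bT)) \cdot \Det_{\RR}(\derR\Gamma(K_v^{\ur}/K_v, \bT))^{-1}.
\]
By Proposition~\ref{prop:43} the first factor is $(f_v)$, so it remains to compute the unramified determinant explicitly and then identify the resulting ratio with $(1-\sigma_v^{-1})^{-1}$.

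Since $v \nmid p$ and $v$ is unramified in $L/K$, and the $\Z_p^2$-extension $K_{\infty}/K$ is ramified only at $\pe$ and $\ol{\pe}$, the place $v$ is in fact unramified in $L_{\infty}/K$. Consequently $\bT$ is an unramified $G_{K_v}$-module, and $\derR\Gamma(K_v^{\ur}/K_v, \bT)$ is represented in degrees $[0,1]$ by the two-term complex $[\bT \xrightarrow{\text{Frob}_v - 1} \bT]$. Viewing $\bT \simeq \RR$ as a free $\RR$-module of rank one, and recalling that $G_K$ acts on $\bT = \Z_p(1) \otimes_{\Z_p} \RR$ by $\chi_{\mathrm{cyc}} \otimes \kappa^{-1}$, the arithmetic Frobenius at $v$ acts by multiplication by $\chi_{\mathrm{cyc}}(\text{Frob}_v)\kappa(\text{Frob}_v)^{-1} = N(v)\sigma_v^{-1}$. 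Hence $\Det_{\RR}(\derR\Gamma(K_v^{\ur}/K_v, \bT)) = (N(v)\sigma_v^{-1} - 1)$. To identify $(f_v)$ itself I would introduce the candidate $\tilde f_v := (N(v)\sigma_v^{-1}-1)/(1-\sigma_v^{-1}) \in \Frac(\RR)^{\times}$ and check, for every continuous character $\varepsilon: \GG \to \overline{\Q_p}^{\times}$ which is nontrivial on $\GG_v$ (any such $\varepsilon$ is automatically unramified at $v$, since $\TT_v$ is trivial),
\[
\varepsilon(\tilde f_v) = \frac{N(v)\varepsilon(\sigma_v)^{-1} - 1}{1 - \varepsilon(\sigma_v)^{-1}} = -N(v) \cdot \frac{1 - \varepsilon(\sigma_v)N(v)^{-1}}{1-\varepsilon(\sigma_v)} = -N(v) \cdot \varepsilon(f_v),
\]
where the last equality is Proposition~\ref{prop:43}(ii). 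Since $-N(v) \in \Z_p^{\times} \subset \RR^{\times}$, this forces $(\tilde f_v) = (f_v)$ as fractional ideals, and combining everything yields
\[
\Det_{\RR}(\derR\Gamma_{/f}(K_v, \bT)) = \frac{(N(v)\sigma_v^{-1}-1)/(1-\sigma_v^{-1})}{(N(v)\sigma_v^{-1}-1)} = (1-\sigma_v^{-1})^{-1},
\]
as required.

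The main obstacle is the passage from the character-wise identity $\varepsilon(\tilde f_v) = -N(v)\cdot\varepsilon(f_v)$ to the equality $(\tilde f_v) = (f_v)$ of fractional ideals. This is the Zariski density of the characters $\varepsilon$ with $\varepsilon(\sigma_v) \neq 1$ in $\Spec(\RR)$, which underlies the uniqueness half of Proposition~\ref{prop:43}; alternatively one may base change to $\RRur = \Zpur[[\GG]]$ and invoke Lemma~\ref{lem:90} to reduce to checking an equality of elements of $\Frac(\RRur)^{\times}$. A minor secondary subtlety is the degenerate situation where $\sigma_v$ is torsion in $\GG$ and $1-\sigma_v^{-1}$ becomes a zero divisor on certain components of $\RR$; the formulas above are then to be interpreted componentwise inside $\Frac(\RR)$, in accordance with the conventions implicit in the statement of the proposition.
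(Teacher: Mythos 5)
Your proof is correct, but it takes a genuinely different route from the paper's, and a considerably longer one. The paper simply observes that $\derR\Gamma_{/f}(K_v, \bT)$ is cohomologically concentrated in degree $2$ with $H^2 \simeq Z_v \simeq \RR/(1-\sigma_v)$ (by Lemma~\ref{lem:12}(2) and the vanishing of $H^0, H^1$, which follows since $1-\sigma_v^{-1}$ and $N(v)\sigma_v^{-1}-1$ are nonzero-divisors on $\RR$), and then applies Theorem~\ref{thm:104}: $\Det_{\RR}(Z_v[-2]) = \Fitt_{\RR}(Z_v)^{-1} = (1-\sigma_v)^{-1} = (1-\sigma_v^{-1})^{-1}$. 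That is the whole argument. You instead apply multiplicativity of $\Det_{\RR}$ along the defining triangle, compute the unramified determinant explicitly from the two-term complex $[\bT \xrightarrow{\mathrm{Frob}_v - 1} \bT]$ to get $(N(v)\sigma_v^{-1}-1)$, take $\Det_{\RR}(\derR\Gamma(K_v, \bT)) = (f_v)$ from Proposition~\ref{prop:43}, and then identify the quotient by checking the interpolation property of $f_v$ character by character. Both approaches are valid; the paper's is shorter and self-contained (it never needs the precise interpolation formula for $f_v$, nor the density-of-characters step you flag at the end), whereas your decomposition makes the contribution of each vertex of the triangle transparent and illustrates directly how Proposition~\ref{prop:43} and Proposition~\ref{prop:88} fit together. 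One small thing you could streamline: once you know the cohomology of $\derR\Gamma(K_v^{\ur}/K_v, \bT)$ and $\derR\Gamma(K_v, \bT)$, you already have the cohomology of $\derR\Gamma_{/f}(K_v, \bT)$ concentrated in degree $2$ equal to $Z_v$, so passing through $f_v$ and its interpolation is a detour that can be skipped entirely.
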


\begin{proof}
The cohomology groups of $\derR\Gamma_{/f}(K_v, \bT)$ vanish, except for 
\[
H^2(\derR\Gamma_{/f}(K_v, \bT)) \simeq Z_v \simeq \RR / (1 - \sigma_v)
\]
as in Lemma \ref{lem:12}(2).
Thus the assertion follows from Theorem \ref{thm:104}.
\end{proof}

By applying the local computations in Propositions \ref{prop:43} and \ref{prop:88}, we show the following.

\begin{lem}\label{lem:47}
The assertion of Theorem \ref{thm:42} does not depend on the choice of $\Sigma$ and $S$.
\end{lem}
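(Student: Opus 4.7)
The plan is to verify that both sides of Theorem \ref{thm:42} transform in the same way under two elementary moves: (a) enlarging $\Sigma$ by one place $v \notin \Sigma \cup S$ (which must lie outside $p$ and is automatically unramified in $L_\infty/K$); and (b) moving a place $v \in \Sigma \setminus S$ with $v \neq \ol{\pe}$ into $S$. Since any two admissible pairs $(\Sigma, S)$ and $(\Sigma', S')$ are connected by a finite sequence of such moves, it suffices to check the invariance of the ratio $\Det_{\RR}^{-1}(C_{\Sigma, S})/\iL_{\Sigma, S}$ under each move.

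For the complex side, move (a) is analyzed via the triangle \eqref{eq:14}: its right-hand term $\bigoplus_{w \in S}\derR\Gamma(K_w,\bT)$ is unchanged, so the cofiber of $C_{\Sigma, S} \to C_{\Sigma \cup \{v\}, S}$ coincides with the cofiber of the inflation map $\derR\Gamma(K_\Sigma/K, \bT) \to \derR\Gamma(K_{\Sigma \cup \{v\}}/K, \bT)$. Via a standard localization argument (or equivalently, comparing the triangles \eqref{eq:40} for $\Sigma$ and $\Sigma \cup \{v\}$ and using Poitou--Tate), this cofiber identifies with $\derR\Gamma_{/f}(K_v, \bT)$, whose inverse determinant is $(1 - \sigma_v^{-1})$ by Proposition \ref{prop:88}. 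For move (b), the triangle \eqref{eq:40} shows that transferring $v$ from $\Sigma \setminus S$ to $S$ simply drops a summand $\derR\Gamma(K_v, \bT)$ from the middle term, so by Proposition \ref{prop:43}, $\Det_{\RR}^{-1}(C_{\Sigma, S \cup \{v\}}) = f_v \cdot \Det_{\RR}^{-1}(C_{\Sigma, S})$.

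For the analytic side, by Lemma \ref{lem:90} it suffices to match the corresponding multiplicative change in $\LL_{\Sigma, S} \in \Frac(\RRur)$. Under move (a), replacing $\Sigma$ by $\Sigma \cup \{v\}$ removes the Euler factor of $\widehat{L}$ at $v$ in \eqref{eq:97}; since $v$ is unramified in $L_\infty/K$, every $\varepsilon$ factoring through $\GG$ is unramified at $v$ and the removed factor contributes $(1 - \varepsilon^{-1}(\sigma_v)) = \varepsilon(1 - \sigma_v^{-1})$, matching the complex side. Under move (b), inserting $v$ into $S$ introduces the factor $\frac{1 - \varepsilon(v)N(v)^{-1}}{1 - \varepsilon(v)}$ when $\varepsilon$ is unramified at $v$, which is exactly $\varepsilon(f_v)$; when $\varepsilon$ is ramified at $v$, the factor is omitted and Proposition \ref{prop:43}(ii) gives $\varepsilon(f_v) = 1$, so the two sides still agree. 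Combining these, both $\Det_{\RR}^{-1}(C_{\Sigma,S})$ and $\iL_{\Sigma,S}$ transform by the same factor under each elementary move, establishing the lemma.

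The step I expect to require the most care is the identification of the cofiber of $\derR\Gamma(K_\Sigma/K, \bT) \to \derR\Gamma(K_{\Sigma \cup \{v\}}/K, \bT)$ with $\derR\Gamma_{/f}(K_v, \bT)$ in move (a); while this is a standard consequence of the Selmer complex formalism of Nekov\'{a}\v{r}, matching the degree conventions in \eqref{eq:40} (with its shift $[-2]$ and the dualized complex $\derR\Gamma(K_\Sigma/K, \bT^\dual(1))^\dual$) requires careful tracking. The remainder is routine bookkeeping of Euler factors, with the ramified/unramified dichotomy at places $v$ in $S \setminus \{\pe\}$ being the only other mildly delicate point.
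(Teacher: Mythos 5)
Your proposal is correct and follows essentially the same approach as the paper: the paper's proof likewise splits the variation into two steps — first changing $S$ (which on the complex side drops summands $\derR\Gamma(K_v,\bT)$, matched to $f_v$ via Proposition \ref{prop:43}) and then enlarging $\Sigma$ (via Nekov\'a\v{r}'s triangle producing $\derR\Gamma_{/f}(K_v,\bT)$, matched via Proposition \ref{prop:88}) — with the analytic side matched through the same Euler-factor bookkeeping \eqref{eq:100} and \eqref{eq:101}. The only superficial difference is that you proceed one place at a time while the paper reduces directly to $S=\{\pe\}$ and then compares two choices of $\Sigma$; the ingredients and verifications are identical.
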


\begin{proof}
First we show the independency from $S$.
On the algebraic side, by comparing the definitions in Definition \ref{defn:36}, we obtain
\[
[C_{\Sigma, S}] = - \sum_{v \in S \setminus \{\pe\}} [\derR\Gamma(K_v, \bT)] + [C_{\Sigma, \{\pe\}}]
\] 
in $K_0(\DePTor(\RR))$.
On the analytic side, by Definition \ref{defn:96}, we have
\begin{equation}\label{eq:100}
\LL_{\Sigma, S} = \left( \prod_{v \in S \setminus \{\pe\}} f_v \right) \LL_{\Sigma, \{\pe\}}
\end{equation}
as invertible ideals of $\RR$, where $f_v$ is as in Proposition \ref{prop:43}.
In particular, the similar relation holds for the ideals $\iL_{\Sigma, S}$ and $\iL_{\Sigma, \{\pe\}}$.
Therefore, by Proposition \ref{prop:43}, the assertion for $S$ is equivalent to that for $\{\pe\}$.

Next we show the independency from $\Sigma$.
Let $\Sigma'$ be a finite set of finite places of $K$ which contains $\Sigma$.
Then we have a triangle
\[
\derR\Gamma(K_{\Sigma}/K, \bT) \to \derR\Gamma(K_{\Sigma'}/K, \bT) 
\to \bigoplus_{v \in \Sigma' \setminus \Sigma} \derR\Gamma_{/f}(K_v, \bT) \to
\]
by \cite[Proposition (7.8.8)]{Nek06}.
Combining with the triangles \eqref{eq:14} for $\Sigma$ and $\Sigma'$ with $S = \{\pe\}$, we have
\[
[C_{\Sigma', \{\pe\}}]
=  \sum_{v \in \Sigma' \setminus \Sigma} [\derR\Gamma_{/f}(K_v, \bT)]
+ [C_{\Sigma, \{\pe\}}].
\]
On the analytic side, by Definition \ref{defn:96}, we have
\begin{equation}\label{eq:101}
\LL_{\Sigma', \{\pe\}} =  
\left( \prod_{v \in \Sigma' \setminus \Sigma} (1 - \sigma_v^{-1}) \right)
\LL_{\Sigma, \{\pe\}},
\end{equation}
and the similar formula for $\iL_{\Sigma', \{\pe\}}$ and $\iL_{\Sigma, \{\pe\}}$.
Therefore, by Proposition \ref{prop:88}, the independency follows.
\end{proof}

\begin{lem}\label{lem:46}
Let $L'_{\infty}/K$ be an abelian extension which is a finite extension of  $L_{\infty}$.
Then Theorem \ref{thm:42} for $L'_{\infty}/K$ implies Theorem \ref{thm:42} for $L_{\infty}/K$.
\end{lem}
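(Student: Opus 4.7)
The plan is to show that each side of Theorem \ref{thm:42} for $L_\infty/K$ arises from the corresponding side for $L'_\infty/K$ by base change along the natural surjection $\pi \colon \RR' \twoheadrightarrow \RR$ induced by $\GG' := \Gal(L'_\infty/K) \twoheadrightarrow \GG$, where $\RR' = \Z_p[[\GG']]$. Setting $\bT' = \Z_p(1) \otimes_{\Z_p} \RR'$ with its natural Galois action, one has $\bT \simeq \bT' \otimes_{\RR'} \RR$ as $\Gal(\ol K/K)$-modules, since the character $\kappa$ governing the $\RR$-action on $\bT$ factors through its $\RR'$-valued analogue for $L'_\infty/K$.

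For the complex side, the standard compatibility of continuous Galois cochain complexes with coefficient base change yields $\derR\Gamma(K_\Sigma/K, \bT) \simeq \derR\Gamma(K_\Sigma/K, \bT') \otimesL_{\RR'} \RR$ and likewise for each $\derR\Gamma(K_v, \bT)$. Applying this to the defining triangle \eqref{eq:40} for $C_{\Sigma, S}(L'_\infty/K)$ and using functoriality of the cone, I obtain $C_{\Sigma, S}(L_\infty/K) \simeq C_{\Sigma, S}(L'_\infty/K) \otimesL_{\RR'} \RR$ in $\DePTor(\RR)$. Since determinants of perfect complexes commute with derived base change, this produces a canonical identification
\[
\Det_\RR^{-1}(C_{\Sigma, S}(L_\infty/K)) \simeq \Det_{\RR'}^{-1}(C_{\Sigma, S}(L'_\infty/K)) \otimes_{\RR'} \RR,
\]
exhibiting the left-hand side of Theorem \ref{thm:42} for $L_\infty/K$ as the image under $\pi$ of the left-hand side for $L'_\infty/K$. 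For the analytic side, Definition \ref{defn:96} characterizes $\LL_{\Sigma, S}(L_\infty/K)$ via the interpolation \eqref{eq:97} on grossencharacters factoring through $\GG$, each of which also factors through $\GG'$ with the same interpolation value; hence the natural projection $\Zpur[[\GG']] \to \RRur$ sends $\LL_{\Sigma, S}(L'_\infty/K)$ to $\LL_{\Sigma, S}(L_\infty/K)$, and Lemma \ref{lem:90} upgrades this to the equality $\iL_{\Sigma, S}(L_\infty/K) = \pi(\iL_{\Sigma, S}(L'_\infty/K)) \RR$ of invertible ideals. Combining both base-change compatibilities with the hypothesis for $L'_\infty/K$ yields the assertion.

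The main obstacle will be the bookkeeping behind ``the image under $\pi$'' of an invertible ideal: because $\pi$ is not flat, I have to verify that the base-changed determinant genuinely embeds into $\Frac(\RR)$ as an invertible ideal and that this embedding matches the naive image of the ambient fractional ideal. This is ensured on the geometric side by Proposition \ref{prop:37}, which places $C_{\Sigma, S}(L_\infty/K)$ inside $\DePTor(\RR)$ so that $\Det_\RR^{-1}(C_{\Sigma, S}(L_\infty/K))$ automatically lives in $\Inv_\RR$; on the analytic side, the faithfully flat descent of Lemma \ref{lem:90} rigidifies the comparison by reducing it to an equality after base change to $\Zpur$.
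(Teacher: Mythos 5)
Your proposal follows the same route as the paper: identify $\bT\simeq\bT'\otimes_{\RR'}\RR$, base-change the Galois cohomology complexes to get $C_{\Sigma,S}(L_\infty/K)\simeq C_{\Sigma,S}(L'_\infty/K)\otimesL_{\RR'}\RR$, invoke compatibility of determinants with derived base change on the algebraic side, and match the $p$-adic $L$-ideals on the analytic side via the interpolation property together with Lemma~\ref{lem:90}. Your explicit discussion of why the image of the determinant under the non-flat projection still lands in $\Inv_\RR$ (via Proposition~\ref{prop:37}) is a welcome clarification, since the paper glosses over this.

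One step the paper handles up front that you omit: the statement of Theorem~\ref{thm:42} only applies to sets $\Sigma\supseteq S_\ram(L_\infty/K)\cup S$, and a $\Sigma$ valid for $L_\infty/K$ may fail to contain $S_\ram(L'_\infty/K)$; conversely, to carry out the base-change of $\derR\Gamma(K_\Sigma/K,\bT')$ you need $\Sigma$ to be admissible for the larger extension. The paper resolves this by first invoking Lemma~\ref{lem:47} (independence of the assertion from $\Sigma,S$) to reduce to the situation where the same $\Sigma$ and $S$ serve both extensions; concretely, one enlarges $\Sigma$ to $\Sigma\cup S_\ram(L'_\infty/K)$, applies your base-change argument, then shrinks $\Sigma$ back. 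Without this reduction, your phrase ``the hypothesis for $L'_\infty/K$'' is not literally available for the $\Sigma$ you start with, so you should insert the appeal to Lemma~\ref{lem:47} explicitly.
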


\begin{proof}
By Lemma \ref{lem:47}, we may assume that the same $\Sigma$ and $S$ are chosen for both $L_{\infty}'/K$ and $L_{\infty}/K$.
We put $\RR' = \Z_p[[\Gal(L'_{\infty}/K)]]$.
The canonical map $\RR' \to \RR$ induces a map $\Frac(\RR') \to \Frac(\RR)$, which we denote by $\pi_{L'_{\infty}/L_{\infty}}$.
By \cite[Proposition 1.6.5(3)]{FK06}, we see that $C_{\Sigma, S}(L'_{\infty}/K) \otimes_{\RR'} \RR \simeq C_{\Sigma, S}(L_{\infty}/K)$.
This implies that 
\[
\pi_{L'_{\infty}/L_{\infty}}(\Det_{\RR'}^{-1}(C_{\Sigma, S}(L'_{\infty}/K))) 
= \Det_{\RR}^{-1}(C_{\Sigma, S}(L_{\infty}/K))
\]
as invertible ideals of $\RR$.
On the other hand, it is directly shown by the interpolation property that 
\[
\pi_{L'_{\infty}/L_{\infty}}(\iL_{\Sigma, S}(L'_{\infty}/K)) = \iL_{\Sigma, S}(L_{\infty}/K).
\]
Thus the lemma follows.
\end{proof}

\subsection{Elliptic units, equivariant main conjecture, and $p$-adic $L$-function}

By Lemmas \ref{lem:47} and \ref{lem:46}, in order to prove Theorem \ref{thm:42}, 
we may focus on the following situation:
\[
L = L_{\ff} = K(\ff p), \qquad \Sigma = \Sigma_{\ff},\qquad S = \{\pe\}
\]
for a fixed nonzero integral ideal $\ff$ of $K$ which is prime to $p$, where the notation is introduced just before Theorem \ref{thm:92}.
We also recall that $\GG_{\ff} = \Gal(L_{\ff, \infty}/K)$ and $\RR_{\ff} = \Z_p[[\GG_{\ff}]]$.
Let $\bT_{\ff}$ be the associated Galois representation of $\Gal(\overline{K}/K)$ over $\RR_{\ff}$.

As in \cite[pages 100--101]{JK11}, let $\zeta(\ff) \in \Frac(\RR_{\ff}) \otimes_{\RR_{\ff}} H^1(K_{\Sigma_{\ff}}/K, \bT_{\ff})$ be the element constructed by the elliptic units, where we use the identification in Lemma \ref{lem:12}(3).
Let $\Delta_{\ff}$ be the torsion subgroup of $\GG_{\ff}$.
Fix a splitting of $\GG_{\ff} \twoheadrightarrow \GG_{\ff}/\Delta_{\ff} \simeq \Z_p^2$, and we put $\sI_{\ff} = \Ker(\RR_{\ff} \twoheadrightarrow \Z_p[\Delta_{\ff}])$.
The denominator of $\zeta({\ff})$ is quite small; in fact, we have
\[
\EE_{\ff} := \sI_{\ff} \zeta(\ff) \subset H^1(K_{\Sigma_{\ff}}/K, \bT_{\ff}).
\]
Moreover, we know that $H^1(K_{\Sigma_{\ff}}/K, \bT_{\ff})/\EE_{\ff}$ is torsion (\cite[Theorem 5.7]{JK11}) and $\pd_{\RR_{\ff}}(\EE_{\ff}) < \infty$.

We define complexes $D_{\Sigma_{\ff}, \EE_{\ff}}^{\glob}, D_{\pe, \EE_{\ff}}^{\local}$ in $\DeP(\RR_{\ff})$ which admit triangles
\[
\EE_{\ff}[-1] \to \derR\Gamma(K_{\Sigma_{\ff}}/K, \bT_{\ff}) \to D_{\Sigma_{\ff}, \EE_{\ff}}^{\glob} \to
\]
and
\[
\EE_{\ff}[-1] \to \derR\Gamma(K_{\pe}, \bT_{\ff}) \to D_{\pe, \EE_{\ff}}^{\local} \to.
\]
Then \eqref{eq:14} induces a triangle
\begin{equation}\label{eq:15}
C_{\Sigma_{\ff}, \{\pe\}} \to D_{\Sigma_{\ff}, \EE_{\ff}}^{\glob} \to D_{\pe, \EE_{\ff}}^{\local} \to.
\end{equation}
By construction, these complexes live in $\DePTor(\RR_{\ff})$.
Hence we obtain
\begin{equation}\label{eq:98}
- [C_{\Sigma_{\ff}, \{\pe\}}] = [D_{\pe, \EE_{\ff}}^{\local}] - [D_{\Sigma_{\ff}, \EE_{\ff}}^{\glob}]
\end{equation}
in $K_0(\DePTor(\RR_{\ff}))$.

The global contribution in \eqref{eq:98} is exactly what the equivariant main conjecture \cite{JK11} describes.

\begin{thm}[{\cite[Theorem 5.7 and Corollary 5.12]{JK11}}]\label{thm:48}
We have 
\[
\Det_{\RR_{\ff}}(D_{\Sigma_{\ff}, \EE_{\ff}}^{\glob}) = \RR_{\ff}.
\]
In other words (thanks to Theorem \ref{thm:104}), we have $[D_{\Sigma_{\ff}, \EE_{\ff}}^{\glob}] = 0$ in $K_0(\DePTor(\RR_{\ff}))$.
\end{thm}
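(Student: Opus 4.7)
The plan is to recognize this statement as a direct reformulation, in the language of determinant modules, of the two-variable equivariant main conjecture proved in \cite{JK11}, and to invoke \cite[Theorem 5.7]{JK11} together with \cite[Corollary 5.12]{JK11}. First I would verify that $D^{\glob}_{\Sigma_{\ff}, \EE_{\ff}}$ genuinely lives in $\DePTor(\RR_{\ff})$, so that the assertion on its determinant makes sense: the complex $\derR\Gamma(K_{\Sigma_{\ff}}/K, \bT_{\ff})$ is perfect by Proposition \ref{prop:13}(1), the module $\EE_{\ff}$ is perfect because $\pd_{\RR_{\ff}}(\EE_{\ff}) < \infty$, so the cone is perfect; and the cohomology of the cone is $\RR_{\ff}$-torsion because $H^1(K_{\Sigma_{\ff}}/K, \bT_{\ff})/\EE_{\ff}$ is torsion by \cite[Theorem 5.7]{JK11}, while the remaining cohomology of $\derR\Gamma(K_{\Sigma_{\ff}}/K, \bT_{\ff})$ is torsion by Lemma \ref{lem:12}(1) and the weak Leopoldt conjecture.

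Next I would unwind the defining triangle to obtain
\[
\Det_{\RR_{\ff}}(D^{\glob}_{\Sigma_{\ff}, \EE_{\ff}})
= \Det_{\RR_{\ff}}(\derR\Gamma(K_{\Sigma_{\ff}}/K, \bT_{\ff})) \cdot \Det_{\RR_{\ff}}(\EE_{\ff}[-1])^{-1},
\]
and identify this ratio as the invariant controlled by the equivariant main conjecture via elliptic units. Applying \cite[Theorem 5.7]{JK11}, which uses the Euler system of elliptic units to produce the required annihilators, together with \cite[Corollary 5.12]{JK11}, which upgrades a divisibility of characteristic ideals to an equality of invertible ideals in the two-variable equivariant setting, yields $\Det_{\RR_{\ff}}(D^{\glob}_{\Sigma_{\ff}, \EE_{\ff}}) = \RR_{\ff}$. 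If the main conjecture is stated after base change to $\Zpur$, the descent to $\RR_{\ff}$ follows from Lemma \ref{lem:90}. The translation into the vanishing $[D^{\glob}_{\Sigma_{\ff}, \EE_{\ff}}] = 0$ in $K_0(\DePTor(\RR_{\ff}))$ is then immediate from the isomorphism $\Det_{\RR_{\ff}}: K_0(\DePTor(\RR_{\ff})) \xrightarrow{\sim} \Inv_{\RR_{\ff}}$ of Theorem \ref{thm:104}.

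The main obstacle will be notational bookkeeping rather than a new mathematical idea: one must confirm the exact compatibility between the formulation of \cite{JK11} and the complex $D^{\glob}_{\Sigma_{\ff}, \EE_{\ff}}$ as defined here. In particular, the splitting $\GG_{\ff} \twoheadrightarrow \GG_{\ff}/\Delta_{\ff}$ used to define $\sI_{\ff}$ (and hence $\EE_{\ff} = \sI_{\ff}\zeta(\ff)$) should match the convention of \cite{JK11}, the normalization of the elliptic unit class $\zeta(\ff) \in \Frac(\RR_{\ff}) \otimes_{\RR_{\ff}} H^1(K_{\Sigma_{\ff}}/K, \bT_{\ff})$ should agree with theirs, and the local Euler factors at places in $\Sigma_{\ff} \setminus \{\pe\}$ should be handled consistently between the global cohomology side and the elliptic-unit side. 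Once these compatibilities are checked, the theorem is simply a rephrasing of \cite{JK11}.
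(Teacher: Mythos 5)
Your proposal is correct and matches the paper's approach exactly: the paper gives no proof of this theorem beyond the direct citation to \cite[Theorem 5.7 and Corollary 5.12]{JK11}, treating it as a reformulation of the two-variable equivariant main conjecture in the language of determinant modules. Your additional remarks on verifying that $D^{\glob}_{\Sigma_{\ff}, \EE_{\ff}} \in \DePTor(\RR_{\ff})$ and on matching conventions (the splitting defining $\sI_{\ff}$, the normalization of $\zeta(\ff)$, and the $\Zpur$ base change via Lemma \ref{lem:90}) are sound bookkeeping points consistent with the surrounding discussion in Section \ref{sec:107}.
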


For the local contribution in \eqref{eq:98}, we first observe the connection between elliptic units and the $p$-adic $L$-functions.

\begin{thm}\label{thm:94}
We have 
\[
\SF{0}{\RR_{\ff}}(H^1(K_{\pe}, \bT_{\ff}) / \EE_{\ff}) = \iL_{\Sigma_{\ff}, \{\pe\}}.
\]
\end{thm}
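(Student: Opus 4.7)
The strategy is to transport the statement to $\Zpur$, where the $p$-adic $L$-function $\mu_{\ff}$ is defined, interpret the local cohomology at $\pe$ via de Shalit's Coleman-type construction, and finish by a multiplicativity computation for $\SF{0}{}$.

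First, by Lemma \ref{lem:90}, it suffices to prove the identity after base change to $\RRur_{\ff} = \Zpur[[\GG_{\ff}]]$. On the right hand side, Definition \ref{defn:96} combined with Theorem \ref{thm:92} gives
\[
\iL_{\Sigma_{\ff}, \{\pe\}} \RRur_{\ff} = \LL_{\Sigma_{\ff}, \{\pe\}} \RRur_{\ff} = (\mu_{\ff}).
\]
Second, via the identification of Lemma \ref{lem:12}(3), $H^1(K_{\pe}, \bT_{\ff})$ is the projective limit of semi-local units at $\pe$. The (two-variable) Coleman map of \cite[Chapter II.4]{dS87}, after base change to $\Zpur$, produces an injective $\RRur_{\ff}$-linear map
\[
\theta : H^1(K_{\pe}, \bT_{\ff}) \otimes_{\RR_{\ff}} \RRur_{\ff} \to \RRur_{\ff}
\]
whose cokernel is pseudo-null of finite projective dimension, and whose image of $\zeta(\ff)$ (viewed in the source after tensoring with $\Frac(\RRur_{\ff})$) is exactly $\mu_{\ff}$. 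The second property is the defining construction of $\mu_{\ff}$ in \cite{dS87}; the first is the standard control on the kernel and cokernel of the Coleman map in the ordinary setting. Using Proposition \ref{prop:22}, neither the kernel nor the cokernel of $\theta$ contributes to $\SF{0}{\RRur_{\ff}}$, so it suffices to compute the quotient $\RRur_{\ff}/\sI_{\ff}\mu_{\ff}$.

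Third, I carry out the $\SF{0}{}$ computation. Since $\mu_{\ff}$ is a non-zero-divisor, multiplication by $\mu_{\ff}$ gives $\RRur_{\ff}/\sI_{\ff} \simeq \mu_{\ff}\RRur_{\ff}/\sI_{\ff}\mu_{\ff}$, so there is a short exact sequence
\[
0 \to \RRur_{\ff}/\sI_{\ff} \to \RRur_{\ff}/\sI_{\ff}\mu_{\ff} \to \RRur_{\ff}/\mu_{\ff}\RRur_{\ff} \to 0.
\]
The module $\RRur_{\ff}/\sI_{\ff} \simeq \Zpur[\Delta_{\ff}]$ is pseudo-null and has finite projective dimension via the Koszul resolution of $\sI_{\ff}$, so Proposition \ref{prop:22} gives $\SF{0}{\RRur_{\ff}}(\RRur_{\ff}/\sI_{\ff}) = \RRur_{\ff}$. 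By multiplicativity of $\SF{0}{}$ on exact sequences in $\QQ_{\RRur_{\ff}}$ (Theorem \ref{thm:82}, cf.\ Remark \ref{rem:112}), we obtain $\SF{0}{\RRur_{\ff}}(\RRur_{\ff}/\sI_{\ff}\mu_{\ff}) = (\mu_{\ff})$, matching the right hand side. The main obstacle is step two: carefully translating de Shalit's formulation of $\mu_{\ff}$ via Coleman power series into the module-theoretic description of $H^1(K_{\pe}, \bT_{\ff})$ used here, and verifying that the kernel and cokernel of $\theta$ are indeed of finite projective dimension and pseudo-null, so that they are invisible to $\SF{0}{}$.
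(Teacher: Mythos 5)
Your proposal is correct and follows essentially the same route as the paper. The obstacle you flag in your second step is resolved exactly by citing de Shalit: \cite[Proposition III.1.3]{dS87} supplies the short exact sequence
\[
0 \to \Zpur \widehat{\otimes}_{\Z_p} H^1(K_{\pe}, \bT_{\ff})
\overset{i}{\to} \RRur_{\ff} \to \Zpur \otimes_{\Z_p} Z_{\pe}(L_{\ff,\infty}/K)(1) \to 0,
\]
so your map $\theta$ (here $i$) is actually injective (no kernel to control) and its cokernel is exactly a Tate twist of $Z_{\pe}(L_{\ff,\infty}/K)$, which by Remark \ref{rem:85} is pseudo-null of finite projective dimension; that is the ``standard control'' you invoke. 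The evaluation $i(\zeta(\ff)) = 12\mu_{\ff}$ is the content of (the proof of) \cite[Theorem II.4.14]{dS87}, and $12$ is a unit since $p \geq 5$, so your $\theta(\zeta(\ff)) = \mu_{\ff}$ (up to units) holds. Your third step is a slight reorganization of the paper's: the paper passes directly from $\RRur_{\ff}/i(\EE_{\ff})$ to $\Fitt_{\RRur_{\ff}}(\RRur_{\ff}/i(\zeta(\ff)))$ using Proposition \ref{prop:22} twice, while you explicitly display the short exact sequence $0 \to \RRur_{\ff}/\sI_{\ff} \to \RRur_{\ff}/\sI_{\ff}\mu_{\ff} \to \RRur_{\ff}/\mu_{\ff}\RRur_{\ff} \to 0$ and apply multiplicativity; these are the same computation, and your Koszul-resolution remark for $\pd_{\RR_{\ff}}(\RR_{\ff}/\sI_{\ff}) < \infty$ correctly fills the hypothesis of Proposition \ref{prop:22} which the paper leaves implicit.
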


\begin{proof}
By \cite[Proposition III.1.3]{dS87}, we have an exact sequence of $\RRur_{\ff}$-modules
\[
0 \to \Zpur \widehat{\otimes}_{\Z_p} H^1(K_{\pe}, \bT_{\ff}) 
\overset{i}{\to} \RRur_{\ff} \to \Zpur \otimes_{\Z_p} Z_{\pe} (L_{\ff, \infty}/K)(1) \to 0,
\] 
where $i$ is constructed via Coleman power series.
Moreover, by (the proof of) \cite[Theorem II. 4.14]{dS87}, we have $i(\zeta(\ff)) = 12 \mu_{\ff}$.
Note that the coefficient $12$ comes from the final paragraph of the proof of \cite[Theorem II. 4,12]{dS87}, but it does not matter since we are assuming $p \geq 5$.

When we are concerned with $\SF{0}{\RR_{\ff}}$, by Proposition \ref{prop:22}, we can ignore the pseudo-null modules $Z_{\pe} (L_{\ff, \infty}/K)(1)$ and $\RR_{\ff}/\sI_{\ff} \simeq \Z_p[\Delta_{\ff}]$.
Hence we obtain
\[
\SF{0}{\RR_{\ff}}(H^1(K_{\pe}, \bT_{\ff}) / \EE_{\ff}) \RRur_{\ff}
= \Fitt_{\RRur_{\ff}} (\RRur_{\ff} / i(\zeta(\ff)))
 = \mu_{\ff} \RRur_{\ff}
 = \LL_{\Sigma_{\ff}, \{\pe\}} \RRur_{\ff},
\]
where the final equation is by definition (see Remark \ref{rem:113}(1)).
By Definition \ref{defn:96}, this completes the proof.
\end{proof}

\begin{cor}\label{cor:49}
We have
\[
\Det_{\RR_{\ff}}(D_{\pe, \EE_{\ff}}^{\local}) = \iL_{\Sigma_{\ff}, \{\pe\}}.
\]
\end{cor}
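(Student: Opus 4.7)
The plan is to identify the class $[D_{\pe, \EE_{\ff}}^{\local}]$ in $K_0(\DePTor(\RR_{\ff}))$ via its cohomology, and then translate through the correspondence $\Det_{\RR_{\ff}} \leftrightarrow \SF{0}{\RR_{\ff}}$ provided by diagram \eqref{eq:29} in Remark \ref{rem:112}, so that Theorem \ref{thm:94} supplies the analytic input. The defining triangle
\[
\EE_{\ff}[-1] \to \derR\Gamma(K_{\pe}, \bT_{\ff}) \to D_{\pe, \EE_{\ff}}^{\local} \to
\]
produces a long exact sequence. Combined with the vanishing $H^0(K_{\pe}, \bT_{\ff}) = 0$ (a Shapiro-lemma consequence of the triviality of the Tate module of $\mu_{p^{\infty}}$ over any local field of characteristic zero) and the injectivity of $\EE_{\ff} \to H^1(K_{\pe}, \bT_{\ff})$, this gives
\[
H^0(D_{\pe, \EE_{\ff}}^{\local}) = 0, \quad H^1(D_{\pe, \EE_{\ff}}^{\local}) \simeq H^1(K_{\pe}, \bT_{\ff})/\EE_{\ff}, \quad H^2(D_{\pe, \EE_{\ff}}^{\local}) \simeq Z_{\pe}.
\]

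By Lemma \ref{lem:11} and the hypothesis $\pd_{\RR_{\ff}}(\EE_{\ff}) < \infty$, each of these cohomology modules has finite projective dimension and hence lies in $\QQ_{\RR_{\ff}}$, whence the alternating-sum identity
\[
[D_{\pe, \EE_{\ff}}^{\local}] = -[H^1(K_{\pe}, \bT_{\ff})/\EE_{\ff}] + [Z_{\pe}]
\]
is a valid equation in $K_0(\DePTor(\RR_{\ff}))$. Applying $\Det_{\RR_{\ff}}$ and invoking the relation $\Det_{\RR_{\ff}}(Q[0]) = \SF{0}{\RR_{\ff}}(Q)^{-1}$ that drops out of the anti-commutative right triangle in \eqref{eq:29} (applied to any $Q \in \QQ_{\RR_{\ff}}$) then yields
\[
\Det_{\RR_{\ff}}(D_{\pe, \EE_{\ff}}^{\local}) = \SF{0}{\RR_{\ff}}(H^1(K_{\pe}, \bT_{\ff})/\EE_{\ff}) \cdot \SF{0}{\RR_{\ff}}(Z_{\pe})^{-1}.
\]
The second factor equals $\RR_{\ff}$ by \eqref{eq:108}, and Theorem \ref{thm:94} identifies the first with $\iL_{\Sigma_{\ff}, \{\pe\}}$, which gives the desired equality.

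The main obstacle is verifying the injectivity of $\EE_{\ff} \to H^1(K_{\pe}, \bT_{\ff})$. I plan to establish this using the de Shalit exact sequence recalled in the proof of Theorem \ref{thm:94}: the composite $\Zpur \widehat{\otimes}_{\Z_p} \EE_{\ff} \to \Zpur \widehat{\otimes}_{\Z_p} H^1(K_{\pe}, \bT_{\ff}) \hookrightarrow \RRur_{\ff}$ sends $\zeta(\ff)$ to the non-zero-divisor $12\mu_{\ff}$, whose non-zero-divisor status in $\RRur_{\ff}$ is implicit in the well-definedness of $\iL_{\Sigma_{\ff}, \{\pe\}}$ as an invertible ideal. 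Multiplication by $12\mu_{\ff}$ is therefore injective on $\Zpur \widehat{\otimes}_{\Z_p} \EE_{\ff}$, and faithful flat descent (Lemma \ref{lem:90}, or just flatness of $\Zpur/\Z_p$) then returns injectivity over $\RR_{\ff}$. Once this point is secured, the remainder is purely K-theoretic bookkeeping built on top of Theorem \ref{thm:94} and equation \eqref{eq:108}.
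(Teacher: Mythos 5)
Your proof is correct and follows essentially the same line as the paper's: compute the cohomology of $D^{\local}_{\pe, \EE_\ff}$ (concentrated in degrees $1$ and $2$, with $H^2 \simeq Z_\pe$), use the finite-projective-dimension facts to reduce the determinant to $\SF{0}{\RR_\ff}$ of the cohomology via the anti-commutative correspondence of Theorem \ref{thm:104}/Remark \ref{rem:112}, and conclude by \eqref{eq:108} and Theorem \ref{thm:94}. The only substantive addition is your explicit justification of the injectivity of $\EE_\ff \to H^1(K_\pe, \bT_\ff)$ via the Coleman-map description and the non-zero-divisor property of $\mu_\ff$; the paper's proof takes this for granted when it writes $H^1(D^{\local}_{\pe,\EE_\ff}) \simeq H^1(K_\pe,\bT_\ff)/\EE_\ff$, so your observation correctly fills in that implicit step.
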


\begin{proof}
By the definition of $D_{\pe, \EE_{\ff}}^{\local}$, we have
\begin{align}
H^1(D_{\pe, \EE_{\ff}}^{\local}) \simeq H^1(K_{\pe}, \bT_{\ff}) / \EE_{\ff},\\
H^2(D_{\pe, \EE_{\ff}}^{\local}) \simeq H^2(K_{\pe}, \bT_{\ff}) \simeq Z_{\pe}(L_{\ff, \infty}/K),
\end{align}
and the other cohomology groups are zero.
By Remark \ref{rem:85}, we have $\pd_{\RR_{\ff}}(H^2(D_{\pe, \EE_{\ff}}^{\local})) < \infty$, so 
\[
\Det_{\RR_{\ff}}(D_{\pe, \EE_{\ff}}^{\local})
= \Det_{\RR_{\ff}} \left(H^1(D_{\pe, \EE_{\ff}}^{\local})[-1] \right) \Det_{\RR_{\ff}} \left(H^2(D_{\pe, \EE_{\ff}}^{\local})[-2] \right).
\]
Applying Theorem \ref{thm:104} and the equation \eqref{eq:108}, we obtain
\[
\Det_{\RR_{\ff}}(D_{\pe, \EE_{\ff}}^{\local})
= \SF{0}{\RR_{\ff}}(H^1(K_{\pe}, \bT_{\ff}) / \EE_{\ff}).
\]
Hence Theorem \ref{thm:94} implies the corollary.
\end{proof}

Now \eqref{eq:98}, Theorem \ref{thm:48}, and Corollary \ref{cor:49} complete the proof of Theorem \ref{thm:42}.
This also completes the proofs of Theorems \ref{thm:33} and \ref{thm:34}.

\section{Application to CM elliptic curves}\label{sec:106}

In this section, we prove Theorem \ref{thm:75} by using Theorem \ref{thm:42}.
We use the same notation as in Theorem \ref{thm:75}.

Here we outline the proof of Theorem \ref{thm:75}.
For simplicity, assume $L(E[\pe]) = L$ in this outline.
Then $L^{\pe} = L(E[\pe^{\infty}])$ is a $\Z_p$-extension of $L$ contained in $L_{\infty}$.

\begin{itemize}
\item[(i)] Subsection \ref{subsec:67} is a descent part from $X_{S}(L_{\infty})$ to $X_{S}(L^{\pe})$.
More precisely, from Theorem \ref{thm:42} on the complex $C_{\Sigma, S}(L_{\infty}/K)$, we determine the ideal
\[
\Fitt_{\RR^{\pe}}(X_{S}(L^{\pe})).
\]
\item[(ii)] In Subsection \ref{subsec:68}, we connect $X_{S}(L^{\pe})$ and $\Sel_{S}(E/L^{\pe})[\pe^{\infty}]$.
More precisely, we show
\[
\Sel_{S}(E/L^{\pe})[\pe^{\infty}] \simeq \Hom(X_{S}(L^{\pe}), E[\pe^{\infty}]).
\]
\item[(iii)] Subsection \ref{subsec:69} is a descent part from $\Sel_{S}(E/L^{\pe})[\pe^{\infty}]$ to $\Sel_{S}(E/L)[\pe^{\infty}]$.
Indeed, we observe an exact control theorem
\[
\Sel_{S}(E/L)[\pe^{\infty}] \overset{\sim}{\to} \Sel_{S}(E/L^{\pe})[\pe^{\infty}]^{\Gal(L^{\pe}/L)}.
\]
\end{itemize}

In Subsection \ref{subsec:74}, we deduce Theorem \ref{thm:75} from these results (i), (ii), and (iii).

\subsection{Specialization to one-variable}\label{subsec:67}

This subsection does not concern an elliptic curve and can deal with general $\Z_p$-extensions.

As in Definition \ref{defn:95}, let $\ol{L_{\infty}}/L$ be any $\Z_p$-extension contained in $L_{\infty}$.
Put $\ol{\RR} = \Z_p[[\Gal(\ol{L_{\infty}}/K)]]$.
Let $\ol{\bT} = \Z_p(1) \otimes_{\Z_p} \ol{\RR}$ be the associated Galois representation of $\Gal(\overline{K}/K)$ over $\ol{\RR}$.

Using the perfect complex $C_{\Sigma, S} = C_{\Sigma, S}(L_{\infty}/K)$ in Definition \ref{defn:36},
we define a perfect complex $\ol{C_{\Sigma, S}} = C_{\Sigma, S}(\ol{L_{\infty}}/K)$ by
\[
\ol{C_{\Sigma, S}} = C_{\Sigma, S} \otimesL_{\RR} \ol{\RR}.
\]
Then by \cite[Proposition 1.6.5(3)]{FK06}, the triangle \eqref{eq:40} yields a triangle
\[
\ol{C_{\Sigma, S}} 
\to \bigoplus_{v \in \Sigma \setminus S} \derR\Gamma(K_v, \ol{\bT}) 
\to \derR\Gamma(K_{\Sigma}/K, \ol{\bT}^{\dual}(1))^{\dual}[-2] \to.
\]

Exactly as in Proposition \ref{prop:37}, we obtain the following.

\begin{prop}\label{prop:62}
The complex $\ol{C_{\Sigma, S}}$ is in $\DePTor(\ol{\RR})$.
We have $H^i(\ol{C_{\Sigma, S}}) = 0$ unless $i = 2$, and we have an exact sequence
\begin{equation}\label{eq:65}
0 \to X_{S}(\ol{L_{\infty}}) \to H^2(\ol{C_{\Sigma, S}}) \to Z_{\Sigma \setminus S}^0(\ol{L_{\infty}}/K) \to 0.
\end{equation}
\end{prop}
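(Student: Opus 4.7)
The plan is to carry out an argument that mirrors the proof of Proposition \ref{prop:37}, with $\ol{\RR}$, $\ol{\bT}$, $\ol{L_{\infty}}$ in place of $\RR$, $\bT$, $L_{\infty}$. Membership of $\ol{C_{\Sigma,S}}$ in $\DeP(\ol{\RR})$ is automatic, since it is defined as the derived base change $C_{\Sigma,S} \otimesL_{\RR} \ol{\RR}$ and derived tensor product preserves perfect complexes; it will only remain to show that the cohomology is torsion in order to upgrade to $\DePTor(\ol{\RR})$.

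The core step is to write down the long exact sequence attached to the base-changed triangle displayed just before the proposition and identify each term. The Shapiro lemma descriptions of Lemma \ref{lem:12} carry over verbatim to $\ol{L_{\infty}}/K$, giving $H^0(K_{\Sigma}/K, \ol{\bT}^{\dual}(1))^{\dual} \simeq \Z_p$, $H^1(K_{\Sigma}/K, \ol{\bT}^{\dual}(1))^{\dual} \simeq X_{\Sigma}(\ol{L_{\infty}})$, and $H^2(K_v, \ol{\bT}) \simeq Z_v(\ol{L_{\infty}}/K)$ for each finite place $v$ of $K$. The weak Leopoldt vanishing $H^2(K_{\Sigma}/K, \ol{\bT}^{\dual}(1)) = 0$, which one may inherit from the two-variable case by taking coinvariants under $\Gal(L_{\infty}/\ol{L_{\infty}})$, then kills the rightmost term of the six-term sequence in exactly the pattern appearing in Proposition \ref{prop:37}.

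To recover the short exact sequence \eqref{eq:65}, I would invoke the $\ol{\pe}$-adic Leopoldt-type inclusion to produce
\[
0 \to \bigoplus_{v \in \Sigma \setminus S} H^1(K_v, \ol{\bT}) \to X_{\Sigma}(\ol{L_{\infty}}) \to X_{S}(\ol{L_{\infty}}) \to 0
\]
and splice it into the long exact sequence. Reading off then gives $H^i(\ol{C_{\Sigma,S}}) = 0$ for $i \neq 2$ together with \eqref{eq:65}, after which \eqref{eq:65} exhibits $H^2(\ol{C_{\Sigma,S}})$ as an extension of torsion $\ol{\RR}$-modules and thereby upgrades $\ol{C_{\Sigma,S}}$ to $\DePTor(\ol{\RR})$.

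The expected main obstacle is justifying the $\ol{\pe}$-adic Leopoldt inclusion and the weak Leopoldt vanishing for the one-variable extension $\ol{L_{\infty}}/K$: while $\ol{L_{\infty}} \subset L_{\infty}$ so that both should descend from the two-variable counterparts already used in Proposition \ref{prop:37}, the extension $\ol{L_{\infty}}/L$ is an arbitrary $\Z_p$-extension inside the $\Z_p^2$-extension, so a direct citation of the classical one-variable literature is not automatic. An alternative, entirely algebraic route would be to deduce the proposition from Proposition \ref{prop:37} via the spectral sequence for $- \otimesL_{\RR} \ol{\RR}$ applied to $C_{\Sigma,S}$, bypassing the Leopoldt issues at the cost of a Tor-vanishing computation against $\ol{\RR}$.
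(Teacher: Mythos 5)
Your main plan is precisely what the paper intends: the paper's own proof is the single sentence ``Exactly as in Proposition \ref{prop:37}, we obtain the following,'' i.e.\ one reruns the long exact sequence of the base-changed triangle, invokes the descriptions from Lemma \ref{lem:12} for $\ol{L_\infty}/K$, uses the $\ol{\pe}$-adic-Leopoldt-type inclusion and the weak Leopoldt vanishing, and splices to extract \eqref{eq:65}. Two remarks on the auxiliary parts of your proposal, though.

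First, you are right to single out the one-variable Leopoldt statements as the point that does not simply quote the two-variable case, and the paper leaves this implicit. Your suggestion to ``take coinvariants under $\Gal(L_\infty/\ol{L_\infty})$'' is the natural thing to try, but note that the Hochschild--Serre spectral sequence only kills $E_2^{0,2}$; the term $E_2^{1,1}=H^1(\Gal(L_\infty/\ol{L_\infty}),H^1(K_\Sigma/L_\infty,\Q_p/\Z_p))$ is the coinvariants of $\Hom(X_\Sigma(L_\infty),\Q_p/\Z_p)$, which has no reason to vanish for free. So the weak Leopoldt vanishing for $\ol{L_\infty}/K$ genuinely needs its own input (it is part of what ``exactly as in Proposition \ref{prop:37}'' is taking for granted), rather than being a formal consequence of the $\Z_p^2$-case.

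Second, the ``entirely algebraic'' alternative you sketch does not go through. The Tor spectral sequence for $C_{\Sigma,S}\otimesL_\RR\ol\RR$ would at best identify $H^2(\ol{C_{\Sigma,S}})$ with $H^2(C_{\Sigma,S})\otimes_\RR\ol\RR$ after Tor-vanishing, but then to recover \eqref{eq:65} you would have to know $X_S(L_\infty)\otimes_\RR\ol\RR\simeq X_S(\ol{L_\infty})$, and the paper explicitly points out (in the remark following Corollary \ref{cor:73}) that no such exact control theorem is available. The short exact sequence \eqref{eq:39} also produces a nonzero $\mathrm{Tor}_1^\RR(\Z_p,\ol\RR)$ obstruction when you try to pass the $Z^0$-part through. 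So the base-change route cannot replace the direct long-exact-sequence argument here; the paper's intended (and your primary) route is the right one.
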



By Theorem \ref{thm:42}, we deduce the following.

\begin{cor}\label{cor:73}
We have
\[
\Fitt_{\ol{\RR}}(X_{S}(\ol{L_{\infty}})) 
= \iL_{\Sigma, S}(\ol{L_{\infty}}/K) \Fitt_{\ol{\RR}}^{[1]}(Z_{\Sigma \setminus S}^0(\ol{L_{\infty}}/K)).
\]
\end{cor}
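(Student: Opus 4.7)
The plan is to imitate the argument of Corollary \ref{cor:41} in the one-variable setting, where the role of Theorem \ref{thm:42} is played by its base change along the projection $\RR \to \ol{\RR}$.

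First I would apply $\SF{0}{\ol{\RR}}$ to the exact sequence \eqref{eq:65} supplied by Proposition \ref{prop:62}. Theorem \ref{thm:82}(3) then gives
\[
\SF{0}{\ol{\RR}}(X_S(\ol{L_\infty})) = \SF{0}{\ol{\RR}}(H^2(\ol{C_{\Sigma,S}})) \cdot \SF{1}{\ol{\RR}}(Z^0_{\Sigma \setminus S}(\ol{L_\infty}/K)).
\]
I would then identify each of the three factors. On the left hand side, the one-variable analog of \cite[Proposition 5.15]{Kata_05} yields $\pd_{\ol{\Lambda}}(X_S(\ol{L_\infty})) \leq 1$, where $\ol{\Lambda}$ plays the role of $R'$ from Subsection \ref{subsec:91}; by Theorem \ref{thm:82}(1) this promotes the left hand side to the ordinary Fitting ideal $\Fitt_{\ol{\RR}}(X_S(\ol{L_\infty}))$. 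For the first factor on the right, Remark \ref{rem:112} applied to $\ol{C_{\Sigma,S}} \in \DePTor(\ol{\RR})$, whose cohomology is concentrated in degree $2$ by Proposition \ref{prop:62}, gives $\SF{0}{\ol{\RR}}(H^2(\ol{C_{\Sigma,S}})) = \Det_{\ol{\RR}}^{-1}(\ol{C_{\Sigma,S}})$. The last factor is promoted to $\Fitt_{\ol{\RR}}^{[1]}(Z^0_{\Sigma \setminus S}(\ol{L_\infty}/K))$ via Lemma \ref{lem:83}.

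The identification with $\iL_{\Sigma,S}(\ol{L_\infty}/K)$ then comes from base change. Since $\ol{C_{\Sigma,S}} = C_{\Sigma,S} \otimesL_{\RR} \ol{\RR}$ by construction, the determinant is compatible with base change, and we obtain
\[
\Det_{\ol{\RR}}^{-1}(\ol{C_{\Sigma,S}}) = \Det_{\RR}^{-1}(C_{\Sigma,S}) \ol{\RR} = \iL_{\Sigma,S}(L_\infty/K)\, \ol{\RR} = \iL_{\Sigma,S}(\ol{L_\infty}/K),
\]
where the middle equality is Theorem \ref{thm:42} and the final one is Definition \ref{defn:95}. Assembling the three identifications gives the corollary.

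The step I expect to be the main obstacle is the projective dimension bound needed to invoke Lemma \ref{lem:83} on $Z^0_{\Sigma \setminus S}(\ol{L_\infty}/K)$: unlike in the two-variable case of Remark \ref{rem:85}, the $p$-adic places need not split finitely in $\ol{L_\infty}/K$, so one cannot simply argue by pseudo-nullity. Instead, one verifies the bound by decomposing $Z_{\Sigma \setminus S}$ into its local summands $Z_v(\ol{L_\infty}/K) = \Z_p[[\Gal(\ol{L_\infty}/K)/D_v]]$ and using that the decomposition groups $D_v$ are $p$-torsion-free (the $\ol{L_\infty}$-analog of \cite[Lemma 5.14]{Kata_05}), so that each $Z_v(\ol{L_\infty}/K)$ has finite projective dimension over $\ol{\Lambda}$; combined with the exact sequence defining $Z^0$, this yields the required bound $\pd_{\ol{\Lambda}}(Z^0_{\Sigma \setminus S}(\ol{L_\infty}/K)) \leq 2$.
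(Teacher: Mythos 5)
Your proposal is correct and follows the same route as the paper: base change $\ol{C_{\Sigma,S}} = C_{\Sigma,S} \otimesL_{\RR}\ol{\RR}$, deduce from Theorem~\ref{thm:42} that $\Det_{\ol{\RR}}^{-1}(\ol{C_{\Sigma,S}}) = \iL_{\Sigma,S}(\ol{L_\infty}/K)$, and then rerun the argument of Corollary~\ref{cor:41} with $\ol{\RR}$ in place of $\RR$.

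However, the final paragraph addresses a non-issue, and the remedy you offer is itself not quite right. The bound you are worried about is $\pd_{\ol{\Lambda}}(Z^0_{\Sigma\setminus S}(\ol{L_\infty}/K)) \leq 2$, where $\ol{\Lambda}$ is the ring denoted $R'$ in Subsection~\ref{subsec:91} attached to the one-variable extension; but $\ol{\Lambda} \simeq \Z_p[[T]]$ is a two-dimensional regular local ring, so \emph{every} finitely generated $\ol{\Lambda}$-module has projective dimension at most $2$, and Lemma~\ref{lem:83} applies with no further argument. (This is in fact an advantage of the one-variable setting over the two-variable case, where the analogous bound over $\Lambda \simeq \Z_p[[T_1,T_2]]$, of global dimension $3$, is not automatic.) Moreover, your proposed verification via $p$-torsion-freeness of the decomposition groups is false in general for the places $v \in \Sigma \setminus S$ outside $p$: in a $\Z_p$-extension $\ol{L_\infty}/L$ a non-$p$-adic prime $v$ may split completely, in which case $D_v(\ol{L_\infty}/K)$ surjects onto $D_v(L/K)$ with the same pro-$p$ part and can perfectly well contain $p$-torsion. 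That kind of argument is what gives finite projective dimension over $\ol{\RR}$ (the group ring), not over $\ol{\Lambda}$, and it is only valid for $v \in \{\pe,\ol{\pe}\}$ where local class field theory forces the decomposition group to be $p$-torsion-free. Fortunately none of this affects your proof, since the needed bound over $\ol{\Lambda}$ is automatic.
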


\begin{proof}
By Theorem \ref{thm:42} and the first assertion in Proposition \ref{prop:62}, the image $\iL_{\Sigma, S}(\ol{L_{\infty}}/K)$ of $\iL_{\Sigma, S}(L_{\infty}/K)$ in $\Frac(\ol{\RR})$ is indeed a well-defined invertible ideal, and we have 
\[
\Det_{\ol{\RR}}^{-1}(\ol{C_{\Sigma, S}}) = \iL_{\Sigma, S}(\ol{L_{\infty}}/K).
\]
The rest of the proof is exactly similar to Corollary \ref{cor:41}.
\end{proof}

\begin{rem}\label{rem:100}
While Corollary \ref{cor:73} is analogous to Corollary \ref{cor:41}, we do not have an analogue of Lemma \ref{lem:79} for $\ol{L_{\infty}}/K$.
Indeed, as in Remark \ref{rem:86}, the module $Z_{\pe}(\ol{L_{\infty}}/K)$ is not pseudo-null.
What is even worse is that we do not have $\pd_{\ol{\RR}}(Z_{\pe}(\ol{L_{\infty}}/K)) < \infty$ in general.
Thus the analogue of \eqref{eq:108} does not hold.
\end{rem}

\begin{rem}
It seems impossible to deduce Corollary \ref{cor:73} directly from Theorem \ref{thm:34} because we do not have an exact control theorem between $X_S(L_{\infty})$ and $X_S(\ol{L_{\infty}})$.
Remark \ref{rem:100} also implies that the descent is hard on the right hand sides.
\end{rem}

\subsection{Iwasawa modules and Selmer groups}\label{subsec:68}

First we define the Selmer groups.
Recall that $E$ has two commutative actions by $\OO_K$ and by the absolute Galois group of $K$.
In particular, we have a decomposition
\[
E[p^{\infty}] = E[\pe^{\infty}] \oplus E[\ol{\pe}^{\infty}]
\]
as a Galois representation.
We will be particularly interested in $E[\pe^{\infty}]$.

\begin{defn}\label{defn:66}
For any abelian extension $\FF$ of $K$, we define the $S$-imprimitive Selmer group $\Sel_{S}(E/\FF)[\pe^{\infty}]$ as the kernel of the localization maps
\[
H^1(\FF, E[\pe^{\infty}]) \to \prod_{v \not\in S} H^1(\FF \otimes_{K} K_v, E[\pe^{\infty}]),
\]
where $v$ runs over finite places of $K$ outside $S$ (note that $v = \ol{\pe}$ is allowed).
Then $\Sel_{S}(E/\FF)[\pe^{\infty}]$ is a discrete $\Z_p[[\Gal(\FF/K)]]$-module.
\end{defn}

Consider $L^{\pe} = L(E[\pe^{\infty}])$, which is a $\Z_p$-extension of $L(E[\pe])$.
Here the extension $L(E[\pe])/K$ is abelian and the degree $[L(E[\pe]): L]$ is prime to $p$.

%

\begin{prop}\label{prop:71}
We have a canonical isomorphism
%
%
\[
\Sel_{S}(E/L^{\pe})[\pe^{\infty}] \simeq \Hom(X_{S}(L^{\pe}), E[\pe^{\infty}]).
\]
\end{prop}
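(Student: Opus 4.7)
The plan is to reduce the cohomological statement to a statement about continuous homomorphisms by exploiting the trivial Galois action of $G_{L^{\pe}} := \Gal(\overline{K}/L^{\pe})$ on $E[\pe^{\infty}]$, and then to match the Selmer local conditions with the defining condition of $X_S(L^{\pe})$ via local and global class field theory.

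First, by the very definition $L^{\pe} = L(E[\pe^{\infty}])$, the group $G_{L^{\pe}}$ acts trivially on $E[\pe^{\infty}]$. Hence we have canonical identifications
\[
H^1(L^{\pe}, E[\pe^{\infty}]) = \Hom_{\rm cts}(G_{L^{\pe}}, E[\pe^{\infty}]), \qquad H^1(L^{\pe}_w, E[\pe^{\infty}]) = \Hom_{\rm cts}(G_{L^{\pe}_w}, E[\pe^{\infty}])
\]
at each place $w$ of $L^{\pe}$. Since $E[\pe^{\infty}] \simeq \Q_p/\Z_p$ is pro-$p$, every such homomorphism factors through the maximal abelian pro-$p$ quotient of its source.

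Second, I would translate the definition of $\Sel_S(E/L^{\pe})[\pe^{\infty}]$ into this language: it becomes the set of homs $\phi \colon G_{L^{\pe}}^{{\rm ab},p} \to E[\pe^{\infty}]$ whose restriction to every $G_{L^{\pe}_w}$ vanishes, for each $w$ lying above some $v \notin S$. In particular such $\phi$ kills the inertia subgroups at all such $w$, so it descends to a homomorphism on $X_S(L^{\pe})$, yielding a natural map
\[
\Sel_S(E/L^{\pe})[\pe^{\infty}] \to \Hom(X_S(L^{\pe}), E[\pe^{\infty}]).
\]

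Third, one must verify this map is an isomorphism. Injectivity is formal since $X_S(L^{\pe})$ is a quotient of $G_{L^{\pe}}^{{\rm ab}, p}$. For surjectivity, one needs to show that every $\phi \in \Hom(X_S(L^{\pe}), E[\pe^{\infty}])$ pulls back to a Selmer class, i.e.\ that the local restriction $\phi|_{G_{L^{\pe}_w}}$ automatically vanishes—not just on inertia, but on the full decomposition group—for every $w$ above $v \notin S$. This amounts to showing that the image of $X_S(L^{\pe})$ in each local $H^1(L^{\pe}_w, E[\pe^{\infty}])$ lies in a subgroup on which every map to $E[\pe^{\infty}]$ is trivial. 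The main obstacle is this local matching step: one needs a careful analysis of the local pro-$p$-abelianized decomposition groups in $L^{\pe}/K$, using that at $v = \overline{\pe}$ the $\pe$-part of the Tate module sits in the \'etale quotient of ordinary reduction, and that at $v \nmid p$ the pro-$p$-abelianization of $G_{L^{\pe}_w}$ is controlled by $\mu_{p^{\infty}}(L^{\pe}_w)$ and the unramified Frobenius direction, both of which are constrained by the CM character $\chi_{E, \pe}$ acting on $E[\pe^{\infty}]$. This is a standard type of Iwasawa-theoretic identification in the CM setting (cf.\ Rubin \cite{Rub91}, de Shalit \cite{dS87}), and I expect the authors to invoke it here after verifying the relevant local conditions prime-by-prime.
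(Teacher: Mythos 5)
Your outline correctly identifies the structure of the argument: identify $H^1$ with continuous homomorphisms via the trivial $G_{L^{\pe}}$-action on $E[\pe^{\infty}]$, observe that such homomorphisms factor through the pro-$p$ abelianization, and construct a natural map $\Sel_S(E/L^{\pe})[\pe^{\infty}] \to \Hom(X_S(L^{\pe}), E[\pe^{\infty}])$ by noting that Selmer classes kill inertia outside $S$. You also correctly flag the substantive point—that for $\phi \in \Hom(X_S(L^{\pe}), E[\pe^{\infty}])$ one must check the local restriction at each $w$ above $v \notin S$ vanishes on the \emph{full} decomposition group, not merely on inertia. But you then hand this step off to a ``standard Iwasawa-theoretic identification'' and sketch a route (via $\mu_{p^{\infty}}((L^{\pe})_w)$, the \'etale quotient of ordinary reduction at $\overline{\pe}$, and constraints imposed by $\chi_{E,\pe}$) that is not what is needed and would be considerably more involved than the actual argument.

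The paper closes the gap with one simple observation: \emph{every finite place $v \neq \pe$ of $K$ has infinite residue field extension in $L^{\pe}/K$}. Consequently, at each $w$ of $L^{\pe}$ above such a $v$, the residue field has trivial pro-$p$ absolute Galois group, so the pro-$p$ abelianized decomposition group coincides with its inertia part; ``unramified'' and ``locally trivial'' agree after taking $\Hom(-,\Q_p/\Z_p)$. Since $\pe \in S$, this applies to every $v \notin S$, giving the exact sequence
\[
0 \to \Hom(X_S(L^{\pe}), \Q_p/\Z_p) \to H^1(L^{\pe}, \Q_p/\Z_p) \to \prod_{v \notin S} H^1(L^{\pe} \otimes_K K_v, \Q_p/\Z_p),
\]
and twisting by the trivially-acting $E[\pe^{\infty}]$ yields the proposition. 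Your first two paragraphs are consistent with this, but without the infinite-residue-extension fact your third paragraph leaves the essential equivalence of local conditions unproved.
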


\begin{proof}
We know that every finite place $v \neq \pe$ of $K$ has an infinite residue field extension in $L^{\pe}/K$.
Hence we have an exact sequence
\[
0 \to \Hom(X_{S}(L^{\pe}), \Q_p/\Z_p)
\to H^1(L^{\pe}, \Q_p/\Z_p)
\to \prod_{v \not\in S} H^1(L^{\pe} \otimes_K K_v, \Q_p/\Z_p).
\]
Since the action of the absolute Galois group of $L^{\pe}$ on $E[\pe^{\infty}]$ is trivial,
by twisting, we obtain the assertion.
\end{proof}


\subsection{Exact control theorem}\label{subsec:69}

\begin{prop}\label{prop:72}
We have a canonical isomorphism
%
%
\[
\Sel_{S}(E/L)[\pe^{\infty}] \overset{\sim}{\to} \Sel_{S}(E/L^{\pe})[\pe^{\infty}]^{\Gal(L^{\pe}/L)}.
\]
\end{prop}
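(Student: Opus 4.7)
The proof plan is a control-theorem argument via the snake lemma. Let $H = \Gal(L^{\pe}/L)$; it sits in the extension
\[
1 \to \Gal(L^{\pe}/L(E[\pe])) \to H \to \Gal(L(E[\pe])/L) \to 1
\]
with the left term isomorphic to $\Z_p$ and the right term finite of order prime to $p$. Applying Hochschild--Serre to the $p$-primary module $E[\pe^{\infty}]$, we get $H^i(H, E[\pe^{\infty}]) = 0$ for $i \geq 2$, so inflation--restriction yields a short exact sequence
\[
0 \to H^1(H, E[\pe^{\infty}]) \to H^1(L, E[\pe^{\infty}]) \to H^1(L^{\pe}, E[\pe^{\infty}])^{H} \to 0.
\]
Analogously, for each place $v$ of $K$, letting $H_v \subset H$ be the decomposition subgroup at a chosen prime of $L^{\pe}$ above $v$, the kernel of the corresponding local restriction map is $H^1(H_v, E[\pe^{\infty}])$.

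Applying the snake lemma to the natural commutative diagram comparing the left-exact defining sequences for $\Sel_S(E/L)[\pe^{\infty}]$ and $\Sel_S(E/L^{\pe})[\pe^{\infty}]^{H}$, the kernel of the Selmer control map coincides with the kernel of the restriction
\[
\iota : H^1(H, E[\pe^{\infty}]) \longrightarrow \bigoplus_{v \notin S} H^1(H_v, E[\pe^{\infty}]),
\]
since an inflation class from $H^1(H, E[\pe^{\infty}])$ lies in the $S$-Selmer group if and only if its local restrictions vanish at every $v \notin S$. The surjectivity of the Selmer control map is controlled in a parallel way by the cokernel of $\iota$.

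The main obstacle is the local-global analysis of $\iota$. The key input is that $\ol{\pe} \in \Sigma \setminus S$, and since $E$ has ordinary reduction at the split prime $p$, the module $E[\pe^{\infty}]$ is \'etale above $\ol{\pe}$. Consequently $L^{\pe}/L$ is unramified at primes over $\ol{\pe}$, and the Frobenius at $\ol{\pe}$ acts on $E[\pe^{\infty}]$ via a $p$-adic unit of infinite order; hence $H_{\ol{\pe}}$ is an open subgroup of $H_1 = \Gal(L^{\pe}/L(E[\pe]))$. A direct computation in the cohomology of $\Z_p$ with coefficients in $E[\pe^{\infty}]$ shows that the restriction $H^1(H, E[\pe^{\infty}]) \to H^1(H_{\ol{\pe}}, E[\pe^{\infty}])$ is already injective, so $\ker \iota = 0$. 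The surjectivity is handled by a compatible treatment of the remaining places $v \in \Sigma \setminus S$, where either $H_v = 0$ (giving trivial contribution) or the local inflation identification matches the global one. This local-global analysis at $\ol{\pe}$, relying critically on the ordinary CM structure at a split prime, is the principal technical difficulty; the rest of the argument is a formal snake-lemma bookkeeping.
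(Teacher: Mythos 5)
Your overall strategy (snake lemma / control theorem) matches the paper's, but you have overcomplicated the key step and in doing so have left it unproved. The paper's argument is much simpler: since $\Gamma = \Gal(L^{\pe}/L)$ injects into $\Aut(E[\pe^{\infty}]) \simeq \Z_p^{\times}$, it is pro-cyclic. Writing down the exact sequence
\[
0 \to H^0(\Gamma, E[\pe^{\infty}]) \to E[\pe^{\infty}] \xrightarrow{\;\gamma - 1\;} E[\pe^{\infty}] \to H^1(\Gamma, E[\pe^{\infty}]) \to 0
\]
for a topological generator $\gamma$, and using that $H^0(\Gamma, E[\pe^{\infty}]) = E(L)[\pe^{\infty}]$ is \emph{finite} (so $\gamma - 1$ is multiplication by a nonzero element of $\Z_p$ on $\Q_p/\Z_p$, hence surjective), one gets $H^1(\Gamma, E[\pe^{\infty}]) = 0$ outright, and $H^2 = 0$ since $\Gamma$ has $p$-cohomological dimension $1$. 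Thus the middle vertical arrow in the control diagram is an isomorphism with no local input whatsoever, and the injectivity of the Selmer control map is immediate.

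Your proposal instead claims that $\ker\iota = 0$ by a ``direct computation'' of the restriction $H^1(H, E[\pe^{\infty}]) \to H^1(H_{\ol{\pe}}, E[\pe^{\infty}])$ at $\ol{\pe}$, preceded by an observation that $H_{\ol{\pe}}$ is open in $H$. But openness of a subgroup does not in general make restriction on $H^1$ with divisible coefficients injective; the only reason the restriction here is injective is that the source already vanishes, which is exactly the point you did not establish. So the step you flag as ``the principal technical difficulty'' is actually vacuous once one notices that $E(L)[\pe^{\infty}]$ is finite, and conversely your route has no actual proof behind it. In addition, your handling of surjectivity (``the cokernel of $\iota$ ... handled by a compatible treatment'') is hand-waved: what one needs is injectivity of the right-hand vertical map, which the paper obtains by repeating the same $H^1 = H^2 = 0$ computation for the decomposition subgroups $\Gamma_w$ (each of which again injects into $\Z_p^{\times}$ and acts nontrivially, since $v \neq \pe$ has infinite residue extension in $L^{\pe}/K$). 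As written your proposal has a genuine gap at both the injectivity and surjectivity steps.
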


\begin{proof}
We follow a standard proof of control theorems (see \cite{Gree99}, for example).
Put $\Gamma = \Gal(L^{\pe}/L)$.
Since we have a natural injective map $\Gamma \hookrightarrow \Aut(E[\pe^{\infty}]) \simeq \Z_p^{\times}$,
the group $\Gamma$ is pro-cyclic.

Consider the commutative diagram with exact rows
\[
\xymatrix{
	0 \ar[r]
	& \Sel_{S}(E/L)[\pe^{\infty}] \ar[r] \ar[d]
	& H^1(L, E[\pe^{\infty}]) \ar[r] \ar[d]
	& \prod_{v \not\in S} H^1(L \otimes_K K_v, E[\pe^{\infty}]) \ar[d]\\
	0 \ar[r]
	& \Sel_{S}(E/L^{\pe})[\pe^{\infty}]^{\Gamma} \ar[r]
	& H^1(L^{\pe}, E[\pe^{\infty}])^{\Gamma} \ar[r]
	& \prod_{v \not\in S} H^1(L^{\pe} \otimes_K K_v, E[\pe^{\infty}])^{\Gamma}\\
}
\]
Thus it is enough to show that both the middle and the right vertical arrows are isomorphic.

First we deal with the middle arrow.
By the inflation-restriction exact sequence, it is enough to show
\[
H^i(\Gamma, E[\pe^{\infty}]) = 0
\]
for $i = 1, 2$.
Since the $p$-cohomological dimension of $\Gamma$ is $1$, we have $H^2(\Gamma, E[\pe^{\infty}]) = 0$.
We use an exact sequence
\[
0 \to H^0(\Gamma, E[\pe^{\infty}]) \to E[\pe^{\infty}] \overset{\gamma - 1}{\to} E[\pe^{\infty}] \to H^1(\Gamma, E[\pe^{\infty}]) \to 0,
\]
where $\gamma$ is a topological generator of $\Gamma$.
Since $H^0(\Gamma, E[\pe^{\infty}]) = E(L)[\pe^{\infty}]$ is finite, this sequence shows that $H^1(\Gamma, E[\pe^{\infty}]) = 0$.

Second we deal with the right arrow.
Let $w$ be a finite place of $L^{\pe}$ which does not lie above $\{\pe\}$.
Then it is enough to show that the restriction map
\[
H^1(L_w, E[\pe^{\infty}]) \to H^1((L^{\pe})_w, E[\pe^{\infty}])^{\Gamma_w}
\]
is isomorphic, where $\Gamma_w = \Gal((L^{\pe})_w/L_w)$.
This can be proved in a similar way to the global case above.
\end{proof}

\subsection{Proof of Theorem \ref{thm:75}}\label{subsec:74}
Recall that $\chi_{E, \pe}$ denotes the character defined by $E[\pe^{\infty}]$.
By Corollary \ref{cor:73} for the extension $L^{\pe}/K$ and Proposition \ref{prop:71}, we obtain
\begin{align}
\Fitt_{\RR^{\pe}} (\Sel_{S}(E/L^{\pe})[\pe^{\infty}]^{\dual})
& = \ttilde{\chi_{E, \pe}} \left(\Fitt_{\RR^{\pe}} (X_{S}(L^{\pe})) \right)\\
& = \ttilde{\chi_{E, \pe}} \left(\iL_{\Sigma, S}(L^{\pe}/K) \Fitt^{[1]}_{\RR^{\pe}} (Z_{\Sigma \setminus S}^0(L^{\pe}/K)) \right).
\end{align}
Combining this with Proposition \ref{prop:72} and the functoriality of the Fitting ideals, we obtain Theorem \ref{thm:75}.

\section*{Acknowledgments}

I would like to express my gratitude to Takeshi Tsuji and to Masato Kurihara for their support during the research.
This research was supported by JSPS KAKENHI Grant Number 17J04650, by JSPS KAKENHI Grant Number 19J00763, and by the Program for Leading Graduate Schools (FMSP) at the University of Tokyo.

{
\bibliographystyle{abbrv}
\bibliography{biblio}
}

\end{document}